\tikzstyle{ball} = [circle,shading=ball,ball color=black,minimum size=1mm,inner sep=1.3pt]
\newcolumntype{L}{>{$}l<{$}} 
\newtheorem{Theo}[subsection]{Theorem}
\newtheorem{TheoM}{Theorem}
\crefname{Theo}{Theorem}{Theorems}
\newtheorem*{defi-intro}{Definition}
\newtheorem*{Teo}{Theorem}
\newtheorem*{THM-KuttlerReichstein}{Theorem 1.1 in \cite{KuttlerReichstein-Intrinsic}}
\newtheorem{lem}[subsection]{Lemma}
\newtheorem{pro}[subsection]{Proposition}
\newtheorem{Cor}[subsection]{Corollary}
\theoremstyle{definition}
\newtheorem{defi}[subsection]{Definition}
\theoremstyle{remark}
\newtheorem*{acknowledgement}{Acknowledgements}
\newtheorem{Question}{Question}
\crefname{Question}{Question}{Questions}
\def\subweyl#1{\ensuremath{\mathcal R_1#1}}
\def\alt{\ensuremath{\mathsf{Alt}}}
\def\mmid{\,\middle\vert\,}
\def\J{\ensuremath J}
\newcommand{\triv}[1][1]{\mathsf 0_{#1}}
\def\diag[#1,#2]{\ensuremath{\begin{pmatrix} {#1}&0\\ 0&{#2}\end{pmatrix}}}
\def\define{\ensuremath{\overset{\operatorname{\scriptscriptstyle def}}=}}
\definecolor{cof}{RGB}{219,144,71}
\definecolor{pur}{RGB}{186,146,162}
\definecolor{greeo}{RGB}{91,173,69}
\definecolor{greet}{RGB}{52,111,72}
\numberwithin{equation}{section} 
\numberwithin{subsection}{section} 
\let\c@equation\c@subsection
\let\c@table\c@figure 
\title[Finite Groups Generated in Low Real Codimension]{Finite Groups Generated\\ in Low Real Codimension}
\author{Ivan Martino}
\author{Rahul Singh}
\date{\today}
\begin{document}
\begin{abstract}

We study the intersection lattice of the arrangement $\mathcal{A}^G$ of subspaces fixed by subgroups of a finite linear group $G$.
When $G$ is a reflection group, this arrangement is precisely the hyperplane reflection arrangement of $G$.
We generalize the notion of finite reflection groups. 
We say that a group $G$ is generated (resp. strictly generated) in codimension $k$ if it is generated by its elements that fix point-wise a subspace of codimension at most $k$ (resp. precisely $k$). 

If $G$ is generated in codimension two, we show that the intersection lattice of $\mathcal{A}^G$ is atomic. We prove that the alternating subgroup $\alt(W)$ of a reflection group $W$ is strictly generated in codimension two; moreover, the subspace arrangement of $\alt(W)$ is the truncation at rank two of the reflection arrangement $\mathcal{A}^W$.

Further, we compute the intersection lattice of all finite subgroups of $GL_3(\mathbb{R})$, and moreover, we emphasize the groups that are ``minimally generated in real codimension two", i.e, groups that are strictly generated in codimension two but have no real reflection representations. 
We also provide several examples of groups generated in higher codimension.
\end{abstract}
\maketitle

\setcounter{section}{0}\setcounter{subsection}{0}

Let $G$ be a finite subgroup of $GL(V)$, $G\xhookrightarrow{\rho} GL(V)$, and consider the quotient map $\pi:V\rightarrow\nicefrac{V}{G}=:X$. 
In this paper, we study the Luna stratification \cite{Luna-Slices} of $\nicefrac{V}{G}$. 
In this case, the Luna stratification coincides with the isotropy stratification: each stratum $X_{\mathcal{H}}$ of $\nicefrac{V}{G}$ consists of the irreducible component of closed orbits having isotropy group in a specified conjugacy class $\mathcal{H}$ of subgroups of $G$. 

In other words, if $v$ is a vector of $V$ then \[
    \operatorname{Stab}_\rho(v)\define \{g\in G \mid g v= v\}
\] 
is the isotropy group at $v$.
We denote by $V_H^{\rho}$ the locus of points in $V$ whose isotropy group is precisely $H$ 
\[
    V_H^{\rho}\define \{v\in V\mid H= \operatorname{Stab}_{\rho}(v)\}
\]
and by $V_{\mathcal{H}}^{\rho}$ the union of $V_L^{\rho}$ for all $L$ in the conjugacy class $\mathcal{H}$, i.e.,
\[
	V_{\mathcal{H}}^{\rho}\define \bigcup_{L\in \mathcal{H}} V_L^{\rho}.
\]
If $V_H^{\rho}\neq \emptyset$, than we say $H$ is a \emph{stabilizer subgroup} with respect to $\rho$. 
%
The quotient map $\pi$ restricted to $V_{\mathcal{H}}^{\rho}$ is surjective onto $X_{\mathcal{H}}$; further, $\pi\mid_{V_{H}^{\rho}} :V_{H}^{\rho}\rightarrow X_{\mathcal{H}}$ is a principal $(\nicefrac{N_G(H)}{H})$-bundle where $N_G(H)$ is the normalizer of $H$ in $G$, see \cite[\S6.9]{VinbergPopov-Invariant} and \cite[\S1.5]{Schwarz-Lifting}. 

As $H$ varies among the stabilizer subgroups, the subvarieties $V_H^{\rho}$ from a stratification. 
The closure of each stratum $\overline{V_H^{\rho}}$ is the union of open strata, i.e.,\[
    V^H_{\rho}\define \overline{V_H^{\rho}}=\bigsqcup\limits_{H\subset H'} V_{H'}^{\rho},
\]
where the disjoint union is over the stabilizer subgroups $H'$ that contain $H$. 
It is easy to check that the closed strata are linear subspaces of $V$ and that 
\[
	V^H_{\rho}=\{v\in V\mid H \subseteq \operatorname{Stab}_{\rho}(v)\}.
\]
The main goal of this article is to study the collection $\mathcal{A}^{\rho}$ of these subspaces.
\begin{defi-intro}
The arrangement of subspaces {of the representation $\rho$} is 
  \[
    \mathcal{A}^{\rho}\define \{V^{H}_{\rho} \mid \{e\}\neq H \mbox{ is a stabilizer subgroup w.r.t. } \rho\}.
  \]
\end{defi-intro}

\noindent
We remark that this arrangement depends by the representation $\rho$ or, equivalently, it is associated to the linear group $G\xhookrightarrow{\rho} GL(V)$ and not to an abstract group.

The \emph{principal stratum} $V_e^\rho$ of the Luna stratification is the stratum with trivial isotropy group.
It is the complement of the arrangement $\mathcal A^\rho$, 
\begin{equation*}
	V_e^\rho = V\setminus \bigcup_{V^H\in \mathcal{A}^{\rho}} V^H.
\end{equation*}
This is a very well studied object in literature when $\rho$ is a reflection representation \cite{Bessis-Kp1,Bessis-Quotients, Brieskorn-Sur-les-groupes, Deligne-Les-immeubles, Falk-Terao}.

\noindent
A reflection $r\in GL(V)$ is a finite order element whose fixed point set $V^{\langle r\rangle}$ is a hyperplane, i.e., $V^{\langle r\rangle}$ has codimension one.
We say that $W\xhookrightarrow{\rho} GL(V)$ is a \emph{reflection representation} if the finite linear subgroup $W$ is generated by its reflections, i.e., $W$ is a reflection subgroup.

The subspace arrangement $\mathcal{A}^{W}\define \mathcal{A}^{\rho}$ of a reflection representation has been studied extensively (see for instance \cite{DeConciniProcesi-Topics-book,Orlik-Terao-book}); it is a collection of hyperplanes, called the reflection hyperplane arrangement.
The maximal subspaces in $\mathcal{A}^{W}$ are precisely the reflection hyperplanes $V^{\langle r\rangle}$, for $r$ a reflection in $W$.

In this work, we study a class of finite groups that naturally generalizes the above description.
We focus on finite linear groups $G$ which are generated by elements fixing subspaces of codimension one or two.

Before introducing the definition of \emph{the groups (strictly) generated in codimension $i$}, and before going further with the presentation of our results, let us motivate this effort.

\subsubsection*{Intrinsic stratification.}
Kuttler and Reichstein \cite{KuttlerReichstein-Intrinsic} studied if every automorphism of $\nicefrac{V}{G}$ maps a Luna stratum to another stratum. When this happens, the stratification is said to be \emph{intrinsic}. 
In other words, a Luna stratification of $X$ is intrinsic if for every automorphism $f$ of $X$ and every conjugacy class $\mathcal H$ of stabilizer subgroups in $G$, we have $f(X_{\mathcal{H}})=X_{\mathcal{H'}}$ for some conjugacy class $\mathcal H'$ of stabilizer subgroups in $G$. 
Many reflection representations give rise to non-intrinsic stratifications; on the other hand if one removes the reflections, the Luna stratification is intrinsic.

\begin{Teo}
(cf. \cite[Theorem 1.1]{KuttlerReichstein-Intrinsic})
Let $G$ be a finite subgroup of ${GL}(V)$. Assume that $G$ does not contain reflections. 
Then, the Luna stratification of $\nicefrac VG$ is intrinsic. 
\end{Teo}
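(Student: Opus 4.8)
The plan is to recover the Luna stratification from the intrinsic local geometry of $X=\nicefrac{V}{G}$ and then to observe that any automorphism automatically preserves that geometry. The essential tool is the \'etale (or formal) slice theorem of Luna \cite{Luna-Slices}: near the image of a point $v$ with stabilizer $H=\operatorname{Stab}_\rho(v)$, the germ of $X$ is isomorphic to the germ at the origin of $\nicefrac{W}{H}\times\mathbb A^{d}$, where $W$ is an $H$-stable complement to $V^{H}$ (so $W^{H}=0$) and $d=\dim V^{H}=\dim X_{\mathcal H}$. Every invariant of the pointed germ $(X,x)$ is thus an invariant of the stratum through $x$; and since an isomorphism $f\colon X\to X$ induces isomorphisms of complete local rings $\widehat{\mathcal O}_{X,x}\cong\widehat{\mathcal O}_{X,f(x)}$, all such germ invariants are preserved by $f$. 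The whole task is therefore to produce enough germ invariants to pin down the stratum.

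First I would treat the principal stratum. I claim the no-reflection hypothesis is inherited by every slice: if some $h\in H$ fixed a hyperplane of $W$, then, fixing $V^{H}$ pointwise as well, $h$ would fix a hyperplane of $V=V^{H}\oplus W$ and so be a reflection of $G$, contrary to hypothesis. Hence $H$ has no reflection on $W$, and by the Chevalley--Shephard--Todd theorem $\nicefrac{W}{H}$ is singular at the origin whenever $H\neq\{e\}$ (so $W\neq 0$), while it is smooth when $H=\{e\}$. This identifies the smooth locus $X^{\mathrm{sm}}$ with the image of the principal stratum $V_e^\rho$, an intrinsic set; equivalently the singular locus has codimension $\ge 2$, since a codimension-one stratum would force a stabilizer fixing a hyperplane, i.e. a reflection.

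Next I would recover the stabilizer directly from the germ. Because $H$ has no reflection on $W$, the non-free locus $S\subset W$ has codimension $\ge 2$; hence $W\setminus S$ is simply connected, $H$ acts freely on it, and the local fundamental group of the smooth locus of the germ $(\nicefrac{W}{H},0)$ is exactly $H$. By Prill's classification of small quotient singularities, this germ determines $H$ together with its action on $W$ up to conjugacy in $GL(W)$. Since $W^{H}=0$, the deepest stratum of $\nicefrac{W}{H}$ through the origin is $\{0\}$, so the germ carries no smooth factor beyond the evident $\mathbb A^{d}$; thus $(X,x)$ recovers both $d=\dim X_{\mathcal H}$ and the slice representation $H\curvearrowright W$. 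Applying this recovery at every point, and recursing on the lower-dimensional slice quotients (each again reflection-free), shows that the full stratified-germ data is an intrinsic invariant, hence preserved by $f$.

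The hard part will be the passage from this local, germ-level statement to the required global one, $f(X_{\mathcal H})=X_{\mathcal H'}$. Two difficulties must be resolved. The germ only sees the stabilizer up to conjugacy in $GL(V)$, whereas the Luna strata are indexed by conjugacy \emph{in $G$}; I would therefore need to argue that strata sharing a common germ type cannot be mixed by $f$, using the incidence pattern of their closures and the fact (from the slice model) that each stratum is a connected, locally closed, smooth subvariety. The second is an equisingularity point: one must check that the equisingular locus of $(X,x)$ coincides near $x$ with the stratum $X_{\mathcal H}$, so that the intrinsic germ data organizes into the genuine stratification rather than a coarsening of it. I expect this globalization---rather than any of the local computations---to be the crux, and to be precisely where the codimension-$\ge 2$ consequence of the no-reflection hypothesis does the decisive work, keeping the quotient normal and \'etale in codimension one so that the local fundamental groups glue coherently.
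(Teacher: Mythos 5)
First, a point of order: the paper never proves this statement. The ``cf.'' is doing real work --- the theorem is imported verbatim from Kuttler and Reichstein \cite{KuttlerReichstein-Intrinsic}, where it is Theorem 1.1, and it serves here only as motivation. So your attempt can only be compared with \emph{their} proof, and measured that way your proposal is essentially a reconstruction of it: Luna's slice theorem reducing everything to germs of the form $(V^H\times W/H,\,0)$; the observation that the no-reflection hypothesis is inherited by every slice representation (your argument for this is exactly the right one, and it is the single place where the hypothesis enters); Chevalley--Shephard--Todd to identify the smooth locus with the principal stratum; and a Prill-type recovery of the slice representation $(H,W)$, up to linear conjugacy, from the germ via the local fundamental group of its smooth part. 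That is the Kuttler--Reichstein strategy.

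Two remarks. First, a caveat on the setting: your topological steps --- simple connectivity of the complement of a codimension-$\geq 2$ subvariety, and Prill's classification of small quotient singularities --- are complex-analytic. They are valid over $\mathbb{C}$ (or over an algebraically closed field of characteristic zero, replacing $\pi_1$ by the \'etale fundamental group and invoking purity of the branch locus), which is indeed the setting of the cited theorem; but they are \emph{not} valid in the real setting in which the rest of this paper works. Over $\mathbb{R}$, ``codimension $\geq 2$'' does not give a simply connected complement ($\mathbb{R}^3$ minus a line), so your local $\pi_1$ computation would collapse. Second, the globalization you flag as the crux is genuine but closes more quickly than you expect, using precisely the ingredients you list: strata sharing a germ type have equal dimension (the germ determines $d$, as you argue); the closure of a stratum meets only strata of strictly smaller dimension (if $L'\supsetneq L$ are stabilizer subgroups then $\dim V^{L'}<\dim V^{L}$, because stabilizer subgroups are fixators); hence each stratum is open and closed inside the locus of points with its germ type, and since each stratum is irreducible, the strata are exactly the connected components of the germ-type loci. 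An automorphism preserves germ types, hence permutes these components, hence permutes strata, which is what ``intrinsic'' demands. In particular the mismatch you worried about --- the germ seeing $H$ only up to $GL(V)$-conjugacy rather than $G$-conjugacy --- is harmless: distinct $G$-classes with the same germ type just contribute several components, and the definition of intrinsic allows the automorphism to permute them.
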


Kuttler and Reichstein \cite{KuttlerReichstein-Intrinsic} also exhibited several positive examples of linear reductive groups with intrinsic Luna stratification.
Later, Schwarz \cite{Schwarz-VectorLuna} proved that only finitely many connected simple groups $G$ have stratifications that fail to be intrinsic.
More results about this can be also found in \cite{Kuttler-Lifting}.

\subsubsection*{Mixed Tate property}
Ekedahl \cite{Ekedahl-inv} used the arrangement $\mathcal{A}^G$ to compute the class of the classifying stack $\mathcal{B} G$ for certain finite groups $G$, for instance the cyclic group $\nicefrac{\mathbb Z}{n\mathbb Z}$ and the Symmetric group $S_n$.
Later, understanding the Luna stratification of $\nicefrac{V}{G}$ was instrumental towards finding a recursive formula \cite{Martino-TEIFFG} for geometric invariants belonging to the split Grothendieck group of abelian groups $\operatorname{L_0}(Ab)$, see \cite{Intro-Ekedahl-Invariants}.
The combinatorics of Ekedahl computations \cite{Ekedahl-inv} is also the main interest of \cite{DelucchiMartinoBW}.

Totaro has used the combinatorics of $\mathcal{A}^G$, to show recursively that certain quotient substacks of $[\nicefrac{V}{G}]$, and the classifying stack $\mathcal{B} G$, are mixed Tate, see \cite[Section 9]{Totaro-MotiveClassifying}.

\subsubsection*{The open complement $V_e^{\rho}$}
Bessis \cite{Bessis-Kp1} has shown that the open complement $V_e^{\rho}$ of $\mathcal{A}^W$ is $\operatorname{K}(\pi,1)$. 
The principal stratum plays a crucial role even when the representation is not a reflection representation.
Indeed, a common theme in invariant theory is that for sufficiently large $s$ the tensor product representation $V^s$ describes the action of $G$ better than the representation $V$ itself.
In the most ambiguous way possible, this is due to the fact that $(V^s)_e$ is a \emph{nicer} principal open stratum than $V_e$.

There are many examples of this principle. 
For instance the class of the classifying stack $\mathcal{B} G$ in the Motivic ring of algebraic variety $\widehat{\operatorname{K}_0}(Var_{\mathbf{k}})$ can be approximated as 
\[
\{\mathcal{B} G\}=\lim_{s} \{\nicefrac{V^s}{G}\}\mathbb{L}^{-s\operatorname{dim}V}\in \widehat{\operatorname{K}_0}(Var_{\mathbf{k}})
\]
where $\mathbb{L}$ is the Lefschetz class in $\widehat{\operatorname{K}_0}(Var_{\mathbf{k}})$, see \cite[Proposition 2.5]{Intro-Ekedahl-Invariants}.
Other examples can be found in \cite{Lorenz-Cohen-Macaulay,Popov-GenericallyMultiple,KuttlerReichstein-Intrinsic}.

We also mention that whenever $V\setminus V_e^{\rho}$ has codimension at least two, the principal stratum $\nicefrac{V_e^{\rho}}{G}$ can be used as a model for the classifying space $\mathcal BG$ and, therefore, the group cohomology of $G$ and the unramified cohomology of $\mathcal BG$ can be computed using $\nicefrac{V_e}{G}$, see \cite{Totaro-MotiveClassifying} and \cite[Part 1, Appendix C]{SkipMerkurjevSerr-Cohomological}.

\subsubsection*{Linear Coxeter Groups and Alternating Subgroups} 
Any finite reflection group can be equipped with a Coxeter system \cite{MR1503182}.
For a linear finite reflection groups $W$, the kernel of the determinant map $\det:W\rightarrow\mathbb R$ is called the alternating subgroup $\alt(W)$ of $W$. 
The combinatorics of reflection groups, Coxeter groups and their alternating subgroups is well-studied, see \cite{MR1890629,BrentiReinerRoichman-Alternating}.
In this work we study $\alt(W)$ from a different point of view, see \Cref{pro-alternating-subgroup}.


%

%
%
%

\section*{Main Definitions}


As the reader might sense, in this work we generalize reflection groups to groups that fit the framework of the literature above.
We assume from now on that $G$ is a finite linear group with representation $G\xhookrightarrow{\rho} GL(V)$ for some finite vector space $V$.

More generally, all groups (including Coxeter groups) are implicitly assumed to be finite linear groups; otherwise we call them \emph{abstract groups} and we write $|G|$ for the underlying abstract group of $G$.


\begin{defi-intro}
Let $g$ be a finite order element of ${GL}(V)$. 
We say that $g$ is generated in codimension $i$ if $\operatorname{codim}V^{\langle g\rangle}=i$.
\end{defi-intro}
Reflections (resp. rotations) in $GL(V)$ are precisely the elements generated in codimension one (resp. two).
Observe that if $V$ is a real vector space, the reflections have order two. 

We now introduce the main object of this work.
\begin{defi-intro}
We say that a group $G$ is generated in codimension $k$ if 
$G$ is generated by elements of codimension at most $k$, i.e.,
\[
	G=\left\langle g\mmid \operatorname{codim}{V^{\langle g\rangle}}\leq k\right\rangle
\]
\end{defi-intro}

It is clear that groups generated in codimension one are precisely the finite Coxeter groups.
%

\begin{defi-intro}
We say that a group $G$ is \emph{strictly} generated in codimension $k$ if $G=\left\langle g\mmid \operatorname{codim}V^{\langle g\rangle} \pmb{=} k\right\rangle$.
\end{defi-intro}
We show in \Cref{pro-alternating-subgroup} that alternating subgroups of Coxeter groups are strictly generated in codimension two.
In fact, we verify that in dimensions up to three, any group that strictly generated in codimension two is the alternating subgroup of some Coxeter group; and further, we can recover the Coxeter group from its alternating group, i.e., if $\alt(W_1)=\alt(W_2)$ (as linear groups), then $W_1=W_2$ (see \Cref{allGroupData}).

In \Cref{pro:Klein-class}, we list the subgroups of $GL_3(\mathbb R)$ which are strictly generated in codimension two.
We present an infinite class of groups strictly generated in codimension two in $GL_{3n}(\mathbb{R})$ in \Cref{Pro-gen-codim-2-in-high-dimension}. 
Finally in \Cref{Pro-group-gen-in-codim-2n}, we present another class of groups strictly generated in codimension $2n$ in $GL_{3n}(\mathbb{R})$.

\begin{defi-intro}
An abstract group $G$ is \emph{minimally generated} in codimension $k$ if $k$ is the minimal integer for which there exists a faithful representation $\rho:G\rightarrow GL(V)$ such that the linear group $\rho(G)$ is strictly generated in codimension $k$.
\end{defi-intro}   

We study several interesting examples of abstract groups that are generated in codimension two, but not generated in codimension one, i.e., they have no reflection representation. 
For instance, the cyclic groups $\nicefrac{\mathbb Z}{n\mathbb Z}$, for $n\geq 3$ are minimally generated in (real) codimension two. 
In particular, they are not reflection groups. 
A representation $\nicefrac{\mathbb Z}{n\mathbb Z}\subseteq GL_2(\mathbb{R})$ is shown in \Cref{Cor-mu-2-n>2isNeatInCod2}.

\section*{Main Results.}
%
%
We work exclusively with real representations and we focus primarily on groups generated in codimension two.
We plan to study the complex case in \cite{MartinoSingh-complex}.

The perk of groups strictly generated in codimension two is that they save many of the nice combinatorial features of the reflection groups.
For instance, while the intersection lattice $\mathcal{L}(\mathcal{A}^{\rho})$ of the arrangements $\mathcal{A}^{\rho}$ is no longer geometric, it is still atomic, see \Cref{thm-atomic-lattice}. 
%
Further if $G$ is strictly generated in codimension two, the maximal subspaces in the arrangement are of codimension two, see \Cref{pro:no-cod-one}. 
This is not the case for groups strictly generated in higher codimensions and we provide examples in \Cref{sec:higherRealDimension}.

%
%

In \Cref{sec:rank-two}, we show that the `strictly generated in codimension two' subgroups of $GL_2(\mathbb R)$ are precisely the cyclic groups $\mu_2(n)$, see \Cref{prop-GL2-real}.

In \Cref{sec:classification,sec:rank-three}, we completely classify the arrangements $\mathcal{A}^{G}$ of finite linear subgroups $G$ in $GL_3(\mathbb{R})$. 
\Cref{fig:all-arrangements} shows the inclusion order on linear subgroups of $GL_3(\mathbb R)$ from left to right.

In our first result we list all the groups $G$ in $GL_3(\mathbb R)$, for which the underlying abstract group $|G|$ is minimally generated in codimension two.
In particular, we find that there exist abstract groups minimally generated in codimension two, besides the toy example $\nicefrac{\mathbb Z}{n\mathbb Z}$.

\begin{TheoM}
The subgroups of $GL_3(\mathbb{R})$ which are minimally generated in codimension two are precisely the following: the Cyclic group $\mu_3(n)$ for $n\geq 3$, the Tetrahedral group $T_3$, and the Icosahedral group $Ico_3$.
\end{TheoM}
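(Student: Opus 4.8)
The plan is to reduce to the classification of finite subgroups of $GL_3(\mathbb R)$ obtained in \Cref{sec:classification,sec:rank-three} and then decide, group by group, the \emph{minimal} codimension in which the underlying abstract group can be strictly generated; by definition the latter is a question about all faithful representations, not only those on $\mathbb R^3$. The first step is to pin down the candidate linear groups. Since every finite $G\subseteq GL_3(\mathbb R)$ preserves a positive definite form, it is conjugate into $O(3)$, and this conjugation changes neither $\mathcal A^\rho$ nor any codimension. The conceptual point is that an element $g$ with $\operatorname{codim}V^{\langle g\rangle}=2$ fixes a line and acts on the orthogonal plane without nonzero fixed vectors, so that action is a genuine rotation and $\det g=+1$; hence a group strictly generated in codimension two lies in $SO(3)$, while Euler's theorem makes every nontrivial element of $SO(3)$ a rotation with codimension-two fixed space. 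Thus, as recorded in \Cref{pro:Klein-class}, the subgroups of $GL_3(\mathbb R)$ strictly generated in codimension two are precisely the nontrivial finite subgroups of $SO(3)$: the cyclic groups $\mu_3(n)\cong C_n$, the dihedral groups $D_n$, the tetrahedral group $T_3\cong A_4$, the octahedral group $O_3\cong S_4$, and the icosahedral group $Ico_3\cong A_5$.

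It then remains to determine which of these abstract groups is minimally generated in codimension one rather than two, i.e.\ admits a finite real reflection representation. I would treat this with two dimension-free obstructions. First, any nontrivial finite real reflection group surjects onto $\{\pm1\}$ via the determinant, which sends each reflection to $-1$, and therefore has a subgroup of index two. Since $A_4^{\mathrm{ab}}\cong\mathbb Z/3\mathbb Z$ and $A_5$ is perfect, neither admits a surjection onto $\mathbb Z/2\mathbb Z$; hence $T_3$ and $Ico_3$ have no reflection representation in any dimension, so their minimal codimension is exactly two. Second, in a reflection representation the reflections are involutions that generate the group, while a cyclic group has at most one involution; thus $\mu_3(n)\cong C_n$ is generated by reflections only for $n\le2$, and for $n\ge3$ it likewise has minimal codimension two.

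For the converse I would exhibit the standard Coxeter realizations of the remaining candidates: $C_2\cong A_1$, $D_n\cong I_2(n)$, and $S_4\cong A_3$ are finite real reflection groups, so $C_2$, every dihedral group, and the octahedral group $O_3$ are minimally generated in codimension one and are excluded. Combining the previous two paragraphs, the subgroups of $GL_3(\mathbb R)$ whose underlying abstract group is minimally generated in codimension two are exactly $\mu_3(n)$ with $n\ge3$, $T_3$, and $Ico_3$.

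The main obstacle is not the enumeration, which is classical, but the assertion that $A_4$ and $A_5$ fail to be reflection groups \emph{in every dimension}; verifying this only in dimension three would be insufficient for a statement about minimal codimension. The value of the determinant/abelianization argument is precisely that it is intrinsic and dimension-free, and I expect that making this obstruction airtight---together with the bookkeeping that minimality ranges over all faithful representations---is where the care is needed.
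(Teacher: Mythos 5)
Your proposal is correct, and its skeleton coincides with the paper's: both reduce to Klein's list via the observation that a subgroup of $GL_3(\mathbb R)$ is strictly generated in codimension two if and only if it is conjugate to a finite subgroup of $SO_3(\mathbb R)$ (\Cref{flgso3}, \Cref{pro:Klein-class}), and both then ask which of the abstract groups $C_n$, $D_n$, $A_4$, $S_4$, $A_5$ admit a faithful reflection representation. The difference lies in how that last question is answered. The paper's proof of \Cref{THM-minimal-rank-3} is a one-line inspection: compare the five families against Coxeter's classification of finite reflection groups \cite{MR1890629}; its only worked instance, given earlier for $T_3$, excludes $T_3$ because the only reflection group of order $12$ is $I_2(6)$ and $|I_2(6)|\not\cong|T_3|$ (they differ in their normal subgroups of order $6$). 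You replace this inspection by classification-free obstructions: every nontrivial real reflection group surjects onto $\{\pm1\}$ via $\det$, hence has an index-two subgroup, which rules out $A_4$ (abelianization $\mathbb Z/3\mathbb Z$) and the perfect group $A_5$ \emph{in every dimension}; and real reflections are involutions, so a cyclic group of order at least $3$ is never generated by reflections --- the same argument underlying \Cref{Cor-mu-2-n>2isNeatInCod2}. The positive identifications ($C_2=A_1$, $D_n=I_2(n)$, $S_4=A_3$) are common to both. Your route is more self-contained and makes the dimension-free nature of the exclusions airtight without scanning a classification list; the paper's route is shorter but leans entirely on \cite{MR1890629} (which, being a classification valid in all ranks, does cover the same ground). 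One remark: like the paper, you implicitly read the theorem as ranging over the subgroups of $GL_3(\mathbb R)$ that are strictly generated in codimension two in their given three-dimensional representation, and this reading is in fact necessary --- for example $\J T_3$ has underlying abstract group $A_4\times\mathbb Z/2\mathbb Z$, which is not a Coxeter group yet is strictly generated in codimension two in a rank-five representation by \Cref{Pro-gen-codim-2-in-high-dimension}, so under the literal reading it would have to appear in the list. Your interpretation thus matches the paper's intended scope, and within that scope your argument is complete.
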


Next, we study the intersection lattice of the subspace arrangement of a group strictly generated in codimension two in $GL_3(\mathbb{R})$. 

\begin{TheoM}
Suppose $G$ is a group strictly generated in codimension two in $GL_3(\mathbb{R})$. Then $\mathcal L(\mathcal{A}^G)$ is isomorphic, as a poset, to $\mathcal L_n$, see \Cref{fig:Ln}, for some $n\neq 2$.

More precisely, for any positive integer $n\neq 2$, there exists $G\subset {GL}_3(\mathbb{R})$, strictly generated in codimension two such that $\mathcal L(\mathcal{A}^G)\cong \mathcal L_n$.
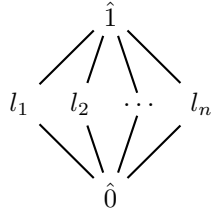
\begin{figure}[h]
\begin{tikzpicture}[thick, scale=0.8]
  \node (zero) at (0,1.5) {$\hat{1}$};
  \node (l0) at (-1.5, 0) {$l_1$};
  \node (l1) at (-0.5, 0) {$l_2$};
  \node (li) at (0.5, 0) {$\dots$};
  \node (ln) at (1.5, 0) {$l_n$};
  \node (whole) at (0,-1.5) {$\hat{0}$};
  \draw (whole) -- (l0) -- (zero);
  \draw (whole) -- (l1) -- (zero);
  \draw (whole) -- (li) -- (zero);
  \draw (whole) -- (ln) -- (zero);
\end{tikzpicture} 
\caption{We denote this poset as $\mathcal L_n$.}
\label{fig:Ln}.
\end{figure}
\end{TheoM}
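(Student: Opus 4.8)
The plan is to reduce the statement to the classification of finite subgroups of $SO(3)$. First I would replace $\rho$ by an orthogonal conjugate, which is harmless since averaging any inner product over $G$ yields a $G$-invariant one and does not change the isomorphism type of $\mathcal L(\mathcal A^G)$. In $O(3)$ a finite-order element has a one-dimensional fixed space (i.e.\ codimension two) exactly when it is a nontrivial proper rotation: its eigenvalues are $1,e^{\pm\iota\theta}$ with $\theta\neq 0$, which forces determinant $+1$. Since $G$ is generated by such elements and a product of proper rotations is again proper, $G\subseteq SO(3)$. Thus in $GL_3(\mathbb R)$ the hypothesis ``strictly generated in codimension two'' is precisely the condition that $G$ be a nontrivial finite rotation group, and conversely every such group qualifies.

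Next I would describe the arrangement. For $G\subseteq SO(3)$ the nontrivial pointwise stabilizers are the cyclic groups of rotations about a common axis, so each maximal subspace $V^H_\rho$ is that axis, a line through the origin; by \Cref{pro:no-cod-one} there are no codimension-one subspaces, consistent with $G$ containing no reflections. Hence the atoms of $\mathcal L(\mathcal A^G)$ are exactly the distinct rotation axes $\ell_1,\dots,\ell_n$, and any two distinct axes meet only in $\{0\}$. Because every nontrivial rotation fixes a whole line, $\{0\}=V^H_\rho$ for no stabilizer $H$, so the origin enters the intersection lattice only as the join $\{0\}=\hat 1$ of the axes, present as soon as $n\geq 2$, alongside the ambient space $\hat 0=\mathbb R^3$. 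This shows $\mathcal L(\mathcal A^G)\cong \mathcal L_n$ with $n$ the number of rotation axes (for $n=1$ the join collapses onto the single atom), and it recovers the atomicity asserted in \Cref{thm-atomic-lattice}.

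For the realization half I would exhibit explicit groups. The cyclic group $\mu_3(m)$ of rotations about a fixed axis ($m\geq 2$) has a single axis and realizes $\mathcal L_1$. The dihedral group of order $2(n-1)$, acting with its principal $(n-1)$-fold axis together with the $n-1$ perpendicular two-fold axes, has exactly $(n-1)+1=n$ distinct axes and realizes $\mathcal L_n$ for every $n\geq 3$; the extremal case $n=3$ is the rotational Klein four-group $D_2$ with its three mutually orthogonal two-fold axes. This covers every positive integer $n\neq 2$.

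The main obstacle, and the source of the exclusion of $n=2$, is ruling out a finite rotation group with exactly two axes. I would settle this by the standard pole-counting argument: counting incidences between the $|G|-1$ nontrivial rotations and their two fixed poles on the unit sphere gives the identity $2(|G|-1)=\sum_{\text{pole orbits }O}|O|\,(n_O-1)$, where $n_O$ is the common stabilizer order. A two-axis group would have exactly four poles, and every orbit partition of these four poles contradicts the identity: the partitions $(4)$ and $(2,2)$ violate it directly, while any partition containing an orbit of size one forces a pole fixed by all of $G$, hence $G$ cyclic with a single axis. Equivalently one may invoke the classification $C_m,D_m,T,O,I$ and read off the axis counts $1$, $m{+}1$, $7$, $13$, $31$, which attain every positive integer except $2$. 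Either route shows the number of axes lies in $\{1\}\cup\{3,4,5,\dots\}$, completing the proof.
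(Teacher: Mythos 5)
Your proposal is correct, and its skeleton — reduce to $SO_3(\mathbb R)$ via an averaged invariant inner product, identify the atoms of $\mathcal L(\mathcal A^G)$ with the rotation axes, and realize $\mathcal L_n$ for $n\geq 3$ by the rotational dihedral group of order $2(n-1)$ — coincides with the paper's, which uses \Cref{lem:equivariant}, \Cref{flgso3}, \Cref{pro:no-cod-one}, and the groups $D_3(n-1)$. Where you genuinely diverge is at the crux, the exclusion of $n=2$. The paper handles this with a one-line trick: if $r_1,r_2$ are nontrivial rotations about distinct axes, then $r_1r_2$ is a nontrivial rotation whose axis differs from both (if $r_1r_2$ fixed a nonzero $v$ on the axis of $r_1$, then $r_2v=r_1^{-1}v=v$ would force the two axes to coincide), so an arrangement with two lines automatically contains a third. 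You instead run the classical pole-counting identity $2(|G|-1)=\sum_O|O|\,(n_O-1)$, or alternatively cite the classification of finite rotation groups and read off the axis counts $1,\,m+1,\,7,\,13,\,31$. Both routes are sound (your case analysis of the four poles is complete, since any size-one orbit forces a globally fixed pole and hence a cyclic group); the paper's argument is more elementary and self-contained, while yours buys more — the pole count is the engine of the full classification and yields the exact set $\{1,3,4,5,\dots\}$ of achievable values of $n$, not merely the impossibility of $2$.

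Two caveats. First, your assertion that $\{0\}=V^H_\rho$ for no stabilizer subgroup $H$ is false under the paper's definitions: $G$ itself is always a stabilizer subgroup (its fixator of $V^G_\rho$ is all of $G$), and $V^G_\rho=\{0\}$ for every non-cyclic $G\subset SO_3(\mathbb R)$ — the paper's tables indeed list $O$ as an element of $\mathcal A^{T_3}$, $\mathcal A^{D_3(n)}$, etc. This is harmless, since the origin then sits in $\mathcal L(\mathcal A^G)$ both as $V^G_\rho$ and as the join of the axes, so the poset you describe is unchanged; but the sentence should be corrected rather than relied on. Second, your treatment of $n=1$ adopts the convention that $\mathcal L_1$ degenerates to the two-element chain (the single atom equals $\hat 1$), because the lattice of the cyclic group $\mu_3(m)$ is $\{\mathbb R^3,\,l_0\}$. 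You should state this convention explicitly, and in fact it is what makes the theorem true: the paper instead realizes $n=1$ by $G=\J\mu_2(2k+1)$, whose lattice is the honest three-element chain $O<l_0<\mathbb R^3$, but by the paper's own \Cref{sec:mu-2xI-in-GL-3} that group is \emph{not} strictly generated in codimension two, so the paper's choice is inconsistent with the hypothesis of the statement while yours is not.
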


The combinatorics of the quotient arrangement $\nicefrac{\mathcal A^G}G$ encodes the combinatorics of the Luna stratification. 

\begin{TheoM}
Suppose $G\subset GL_3(\mathbb R)$ is strictly generated in codimension two.
The intersection lattice of the closures of the Luna strata of $\nicefrac{V}{G}$ is isomorphic to $\mathcal L_n$ for $n\in\{2,3\}$. 
\begin{figure}[h]
\begin{tabular}{cc}
\begin{tikzpicture}[thick, scale=0.8]
  \node (zero) at (0,1.5) {$\hat{1}$};
  \node (l0) at (-1, 0) {$l_1$};
  \node (l1) at (+1, 0) {$l_2$};
  \node (whole) at (0,-1.5) {$\hat{0}$};
  \draw (whole) -- (l0) -- (zero);
  \draw (whole) -- (l1) -- (zero);
\end{tikzpicture} &
\begin{tikzpicture}[thick, scale=0.8]
  \node (zero) at (0,1.5) {$\hat{1}$};
  \node (l0) at (-1.5, 0) {$l_1$};
  \node (l1) at (0, 0) {$l_2$};
  \node (li) at (1.5, 0) {$l_3$};
  \node (whole) at (0,-1.5) {$\hat{0}$};
  \draw (whole) -- (l0) -- (zero);
  \draw (whole) -- (l1) -- (zero);
  \draw (whole) -- (li) -- (zero);
\end{tikzpicture} 
\end{tabular}
\caption{The two possible intersection lattice for the quotient arrangement $\nicefrac{V}{G}$, when $G$ is a finite subgroup of $GL_3(\mathbb R)$ and it is strictly generated in codimension two.}
\label{fig:Ln}
\end{figure}
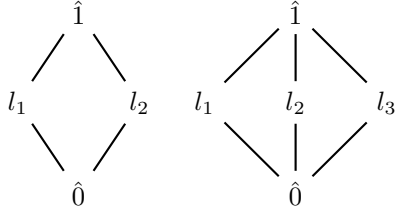
\end{TheoM}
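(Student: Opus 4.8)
The plan is to reduce the statement to the classical classification of finite subgroups of $SO(3)$ and then to recognise the intersection lattice downstairs as the orbit space of the $G$-action on the upstairs lattice $\mathcal L(\mathcal A^G)$.

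First I would show that $G$ is conjugate into $SO(3)$. Each generator $g$ has $\operatorname{codim}V^{\langle g\rangle}=2$, so its fixed space is a line and the induced action on a complementary plane has no eigenvalue $1$; since $g$ has finite order, the two remaining eigenvalues are either a conjugate pair $e^{\pm\iota\theta}$ or the pair $(-1,-1)$, and in both cases $\det g=+1$. Hence the generators lie in $SL_3(\mathbb R)$, and averaging any inner product makes $G$ orthogonal, so $G\subseteq SO(3)$. By the classical classification $G$ is cyclic, dihedral, tetrahedral, octahedral, or icosahedral, matching the list in \Cref{pro:Klein-class}.

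Next I would describe the quotient lattice. By the preceding theorem $\mathcal L(\mathcal A^G)\cong\mathcal L_n$; its atoms are the rotation axes $\ell_1,\dots,\ell_n$, which are the maximal subspaces, each of codimension two by \Cref{pro:no-cod-one}, and its top $\hat1$ is the origin $\{0\}$. As $G$ acts linearly, it fixes $\hat0=V$ and $\{0\}$ and permutes the atoms by conjugating the corresponding cyclic stabilisers, so $\pi$ identifies precisely the axes lying in a common $G$-orbit. The closures of the Luna strata are therefore $V/G$, the images $\pi(\ell)$ of the axis-orbits, and the point $\pi(0)$. The only incidence to check is that axes $\ell,\ell'$ in different orbits satisfy $\pi(\ell)\cap\pi(\ell')=\{\pi(0)\}$: if $\pi(v)=\pi(v')$ with $0\neq v\in\ell$ and $v'\in\ell'$, then $v'=gv$ for some $g\in G$, and linearity forces $g\ell=\ell'$, contradicting that the orbits are distinct. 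Hence the intersection lattice downstairs is $\mathcal L_m$, where $m$ is the number of $G$-orbits of axes.

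It remains to compute $m$. Counting the pairs $(g,\ell)$ with $g\neq e$ and $\ell$ the axis of $g$ yields $|G|-1=\sum_j \tfrac{|G|}{s_j}(e_j-1)$, the sum running over the axis-orbits, where $e_j$ is the order of the pointwise stabiliser of an axis in orbit $j$ and $s_j$ the order of its setwise stabiliser; writing $s_j=c_je_j$ with $c_j\in\{1,2\}$ according to whether $G$ reverses the axis, this becomes
\[
\sum_j \frac{1}{c_j}\left(1-\frac{1}{e_j}\right)=1-\frac{1}{|G|}.
\]
Each summand lies in $[\tfrac14,1)$ and the right-hand side is $<1$, so $m\le 3$; moreover $m=1$ forces $c_1=1$ and $e_1=|G|$, i.e.\ $G$ is cyclic with a single fixed axis. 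Thus for every non-cyclic $G$ we have $m\in\{2,3\}$, and the classification pins down the value: $m=2$ for the tetrahedral group and for dihedral groups of odd order, and $m=3$ for the octahedral and icosahedral groups and for dihedral groups of even order. Hence the lattice of closures is one of the two posets of \Cref{fig:Ln}, while the cyclic family is the degenerate case $m=1$, in which the unique axis is never reversed and its image meets no second stratum.

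I expect the main obstacle to be the orbit count of the last step, in particular the factor $c_j$ distinguishing reversed from non-reversed axes: it is exactly this factor that separates the cyclic family from the remaining groups and that governs whether the dihedral family lands in $\mathcal L_2$ or $\mathcal L_3$. Once the reduction to $SO(3)$ and the identification of the quotient lattice with the orbit space of the $G$-action on $\mathcal L(\mathcal A^G)$ are in hand, everything else is the finite case analysis already packaged in the classification.
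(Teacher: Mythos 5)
Your proof is correct and, while it shares the paper's starting point --- the classification of finite rotation groups in $SO(3)$ (\Cref{pro:Klein-class}) --- it diverges in the key step. The paper argues geometrically: every rotation axis of $G$ is a symmetry axis of the underlying regular polygon or polyhedron, so transitivity of $G$ on vertices, edges, and facets caps the number of axis-orbits at three, and the values $2$ and $3$ are then read off from the tables of \Cref{sec:classification}. You instead run the classical counting argument on pairs $(g,\ell)$, obtaining $\sum_j \frac{1}{c_j}\left(1-\frac{1}{e_j}\right)=1-\frac{1}{|G|}$ with the reversal factor $c_j\in\{1,2\}$; this bounds the number of orbits $m$ by $3$ purely arithmetically and identifies $m=1$ exactly with the cyclic family, so the case-by-case geometry is needed only to decide between $m=2$ and $m=3$ within each non-cyclic family. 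Your write-up also supplies two steps the paper leaves implicit: the reduction to $SO(3)$ via determinants and averaging (the paper's \Cref{lem:equivariant} and \Cref{flgso3}), and the identification of the downstairs intersection lattice with the orbit space of $\mathcal L(\mathcal A^G)$, including the verification that images of axes in distinct orbits meet only at $\pi(0)$. Notably, your handling of the cyclic case is more careful than the paper's: $\mu_3(n)$ is strictly generated in codimension two, yet its quotient lattice is a two-element chain ($m=1$), so the statement as written holds only for the non-cyclic groups; the paper's proof, which concludes merely that the lattice is $\mathcal L_n$ for $n\leq 3$ and then checks that $2$ and $3$ occur, passes over this exception, whereas you flag it explicitly as the degenerate case. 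Two small blemishes, neither a gap: ``dihedral groups of odd order'' should read ``dihedral groups $D_3(n)$ with $n$ odd'' (their order $2n$ is always even), and your blanket claim that the top of $\mathcal L(\mathcal A^G)$ is the origin fails precisely in that same cyclic case, where the unique axis is itself the top --- consistent with your own degenerate-case analysis.
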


The Luna stratification of $\nicefrac{V}{G}$ and the arrangement $\mathcal{A}^G$ are described for all subgroups of $GL_3(\mathbb R)$ in \Cref{sec:classification}.

Finally, as a byproduct we describe the cohomology of the open complement $U_G$ of the arrangement $\mathcal{A}^G$ in the vector space $V$, see \Cref{Thm-cohomology-GL3,THM:cohomology-dim4}.

\begin{TheoM}
A group $G\subset GL_3(\mathbb R)$ is strictly generated in codimension two if and only if the cohomology of $U_G$ is concentrated in degree one and $\operatorname{h}^{1}(U_G)=2N-1$, where $N$ is the number of lines in the subspace arrangement $\mathcal A^G$.
\end{TheoM}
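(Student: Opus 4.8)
The plan is to translate the statement into a computation of the cohomology of a real subspace-arrangement complement and then to run the converse through the classification of \Cref{sec:classification}. I treat the two implications separately.

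For the forward implication, assume $G$ is strictly generated in codimension two. By \Cref{pro:no-cod-one} the maximal subspaces of $\mathcal A^G$ have codimension two, so $\mathcal A^G$ consists solely of lines through the origin (together with the origin as their common meet), and by the preceding structure theorem its intersection lattice is isomorphic to $\mathcal L_N$, where $N$ is the number of lines. I would then read $H^\ast(U_G)$ off this lattice by the Goresky--MacPherson formula for complements of real subspace arrangements, which expresses $\widetilde H^i(U_G)$ as the sum over $x\in\mathcal L(\mathcal A^G)\setminus\{\hat 0\}$ of $\widetilde H_{\operatorname{codim}(x)-2-i}$ of the order complex of the open interval $(\hat 0,x)$. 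Each of the $N$ lines has empty interval below it, so its order complex is the empty complex with $\widetilde H_{-1}\cong\mathbb Z$, contributing one free summand in degree $i=1$; the origin, of codimension three, sits above the antichain of $N$ lines, whose order complex is $N$ discrete points with $\widetilde H_0\cong\mathbb Z^{N-1}$, again landing in degree $i=1$. Summing yields $\widetilde H^i(U_G)=0$ for $i\neq 1$ and $h^1(U_G)=N+(N-1)=2N-1$, which settles this direction for every admissible $N$. (Equivalently one may induct on $N$ with a Mayer--Vietoris sequence peeling off one line at a time, or retract $U_G$ onto the complement of the induced finite point configuration on $S^2$.)

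For the converse I would argue by contraposition, using the complete classification of finite subgroups of $GL_3(\mathbb R)$ and their arrangements. The decisive structural invariant is whether $\mathcal A^G$ contains a plane, i.e.\ whether $G$ contains a reflection. If it does, then $\bigcup_{V^H\in\mathcal A^G}V^H$ contains a hyperplane and $U_G$ is a disjoint union of open polyhedral cones, namely the chambers of the real arrangement, each of which is contractible; hence $H^\ast(U_G)$ is concentrated in degree zero with $h^0$ equal to the number of chambers, and in particular it is \emph{not} concentrated in degree one. Thus the cohomological hypothesis already forces $\mathcal A^G$ to be a pure line arrangement, and a pass through the classification identifies which groups produce such arrangements; matching this against the forward computation gives $h^1(U_G)=2N-1$ and shows these are exactly the groups strictly generated in codimension two.

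The main obstacle is precisely the final step of the converse: one must exclude the possibility that a group \emph{not} strictly generated in codimension two still yields a connected complement whose cohomology matches the signature ``concentrated in degree one with $h^1=2N-1$''. This is where completeness of the $GL_3(\mathbb R)$ classification is indispensable, since $H^\ast(U_G)$ together with the bare count $N$ is a coarse invariant and must be supplemented by explicit knowledge of the arrangements that occur. I expect the delicate cases to be the groups containing improper isometries but no reflection, for which the origin appears as its own stabilizer stratum; these demand careful bookkeeping of the stabilizer subgroups and their fixed loci to confirm that the cohomological signature does indeed pin down strict generation in codimension two.
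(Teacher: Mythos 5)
Your forward implication is sound and is essentially the paper's own argument: \Cref{pro:no-cod-one} eliminates planes, and the Goresky--MacPherson computation you run (each atom line contributing $\operatorname{H}_{-1}(\Delta_l)$ in degree one, the origin contributing $\operatorname{H}_0$ of $N$ discrete points) is exactly what the paper packages as \Cref{Pro-h_n-2-bound}. The genuine gap is in the converse, and it sits precisely in the cases you flag at the end as ``delicate'' and defer to ``a pass through the classification'': that pass cannot close, because those cases violate the stated equivalence. Take $G=\J\mu_3(n)$ with $n$ odd (\Cref{sec:mu-2xI-in-GL-3}). It contains no reflections, and since no rotation in $\mu_3(n)$ has angle $\pi$, every element $\J g$ fixes only the origin; hence the rotations of $G$ generate only the proper subgroup $\mu_3(n)$, and $G$ is strictly generated in codimension three, not two (this is recorded in \Cref{allGroupData}). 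Its arrangement is $\{l_0,\,O\}$, so $U_G=\mathbb{R}^3\setminus l_0\simeq S^1$: cohomology concentrated in degree one with $\operatorname{h}^1(U_G)=1=2N-1$ for $N=1$. In fact $\mu_3(n)$ and $\J\mu_3(n)$ (odd $n$) have literally the same complement $U_G$ and the same $N$, yet only the first is strictly generated in codimension two; so no condition phrased purely in terms of $U_G$ and $N$ can separate them, and your contraposition cannot be completed without amending the statement (the mixed group $(\mu_3(2n),\mu_3(n))$ for even $n$ behaves identically). For comparison, the paper's own proof is silent here: it proves only the forward implication via \Cref{Pro-h_n-2-bound}, so your attempt at the converse is more ambitious than the paper's, but it runs into this obstruction rather than a merely technical difficulty; one would need an extra hypothesis (e.g.\ excluding orientation-reversing elements fixing only the origin) for the ``if'' direction to become provable.

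A secondary, repairable error: in the reflection-containing case you assert that $U_G$ is a disjoint union of contractible chambers with cohomology in degree zero. This is false once the arrangement also contains lines not lying in any plane of the arrangement (e.g.\ $\J D_3(n)$ for odd $n$): a chamber with an open ray deleted is homotopy equivalent to $S^1$ --- map the open half-space $\{z>0\}$ homeomorphically onto $\mathbb{R}^3$ by $(x,y,z)\mapsto(x,y,\log z)$, which carries the ray to a full line --- and indeed the last case of \Cref{Thm-cohomology-GL3} records nonzero $\operatorname{h}^1$ alongside nonzero $\operatorname{h}^0$. What you actually need is only disconnectedness: $U_G$ is open and dense in $\mathbb{R}^3$ and is contained in the complement of a hyperplane, hence it meets both half-spaces and is disconnected, so $\operatorname{h}^0(U_G)>0$ in reduced cohomology and ``concentrated in degree one'' already fails. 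That weaker statement is true and suffices for this branch of your argument; the unfixable gap is the one in the previous paragraph.
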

%

\section*{Questions}

In \Cref{pro-alternating-subgroup}, we prove that the alternating subgroup of a Coxeter group is strictly generated in codimension two. 
We wonder if any group strictly generated in real codimension two is the alternating group of a Coxeter group.

\begin{Question}\label{isAlt?}
Suppose $G$ is strictly generated in codimension two. 
Does there exist a Coxeter group $W$ such that $G=\alt(W)$?
\end{Question}

If $G=\alt(W)$, then $\mathcal A^G$ is the truncation of $\mathcal A^W$ at codimension $2$.
With this in mind, we ask a weaker version of \Cref{isAlt?}:

\begin{Question}\label{que:isSubArrangement}
Let $G$ be a group strictly generated in codimension two. 
Is there a reflection group $W$ such that $\mathcal{A}^G\subseteq \mathcal{A}^W$? 
\end{Question}

We have verified that \Cref{isAlt?}, hence also \Cref{que:isSubArrangement}, has an affirmative answer for subgroups of $GL_d(\mathbb R)$ for $d\leq 3$.








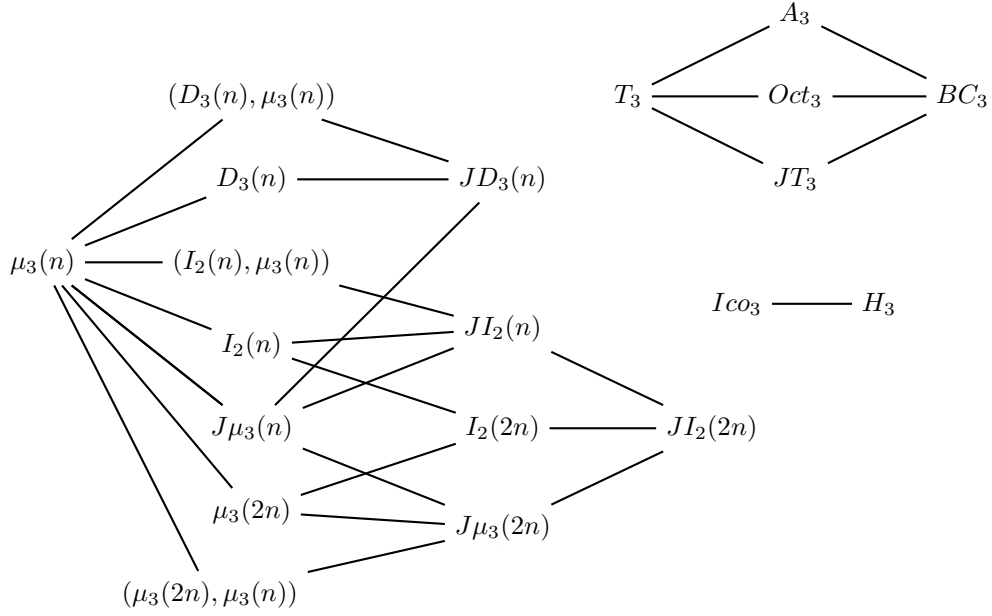
\begin{figure}[h]
\begin{tikzpicture}[thick,scale=1.10]
  \node (j-d3) at (5.5, 1) {$\J D_3(n)$};
  \node (d3) at (2.5, 1) {$D_3(n)$};
  \node (D3-mu2) at (2.5, 2) {$(D_3(n), \mu_3(n))$};

  \node (mu2) at (0,0) {$\mu_3(n)$};
  \node (d2-mu2) at (2.5, 0) {$(I_2(n), \mu_3(n))$};
  \node (j-mu2) at (2.5, -2) {$\J \mu_3(n)$};
  \node (d2) at (2.5, -1) {$I_2(n)$};
  \node (mu2-2n) at (2.5, -3) {$\mu_3(2n)$};
  \node (mu22n-mu2) at (2, -4) {$(\mu_3(2n),\mu_3(n))$};
  \node (j-d2) at (5.5, -0.8) {$\J I_2(n)$};
  \node (i2-2n) at (5.5,-2) {$I_2(2n)$};
  \node (j-mu2-2n) at (5.5, -3.2) {$\J \mu_3(2n)$};
  \node (j-d2-2n) at (8, -2) {$\J I_2(2n)$};

\draw (mu2) -- (j-mu2) -- (j-d3);

  \draw (j-mu2) -- (j-d2);
  \draw (d2) -- (i2-2n) -- (mu2-2n);
  \draw (i2-2n) -- (j-d2-2n);
  \draw (mu2) -- (d3);
  \draw (mu2) -- (D3-mu2);
  \draw (mu2) -- (d2);
  \draw (mu2) -- (d2-mu2);
  \draw (mu2) -- (mu22n-mu2);
  \draw (mu2) -- (j-mu2);
  \draw (mu2) -- (mu2-2n);
  
  \draw (d3) -- (j-d3);
  \draw (D3-mu2) -- (j-d3);
  
  \draw (d2) -- (j-d2);
  \draw (d2-mu2) -- (j-d2);
  
  \draw (j-mu2-2n) -- (j-mu2-2n);
  \draw (mu2-2n) -- (j-mu2-2n);
  
  \draw (j-mu2-2n) -- (j-d2-2n);
  \draw(j-d2) -- (j-d2-2n);
  \draw(j-mu2) -- (j-mu2-2n);
  \draw(mu22n-mu2) -- (j-mu2-2n);
  
  \node (T3) at (7, 2) {$T_3$};
  
  \node (j-T3) at (9, 1) {$\J T_3$};
  \node (Oct3) at (9, 2) {$Oct_3$};
  \node (A3) at (9, 3) {$A_3$};
  
  \node (BC3) at (11, 2) {$BC_3$};
  
  \draw (T3) -- (j-T3) -- (BC3);
  \draw (T3) -- (Oct3) -- (BC3);
  \draw (T3) -- (A3) -- (BC3);

  \node (Ico_3) at (8.3,-0.5) {$Ico_3$};
  \node (H3) at (10,-0.5) {$H_3$};

  \draw (Ico_3) -- (H3);

\end{tikzpicture} 
\caption{The figure shows the inclusion order of linear subgroups of $GL_3(\mathcal{R})$. In \Cref{sec:classification}, we discuss the inclusion of the corresponding subspace arrangements.}
\label{fig:all-arrangements}
\end{figure}

\vspace{0.2cm}

\begin{table}
\centering

\begin{tabular}{|L|L|L|L|L|}  
\hline
\text{Group}                &\text{Other Name}      &\mathcal R_1G      &\mathcal R_2G  &\text{codim}   \\  \hline\hline
\boldsymbol{\mu_3(n)}                    &\mu_2(n)\times\triv        &\triv[3]           &\boldsymbol{\mu_3(n)}       &2  \\  \hline
\J\mu_3(n),\text{ even }n   &\mu_2(n)\times A_1         &\triv[2]\times A_1 &\mu_3(n)       &2  \\  \hline
\J\mu_3(n),\text{ odd }n    &                           &\triv[3]           &\mu_3(n)       &\boldsymbol{3}  \\  \hline
(\mu_3(2n),\mu_3(n)),\text{ even }n &   &\triv[3]           &\mu_3(n)       &\boldsymbol{3}  \\  \hline 
(\mu_3(2n),\mu_3(n)),\text{ odd }n  &\mu_2(n)\times A_1  &\triv[2]\times A_1 &\mu_3(n)       &2  \\  \hline 
\boldsymbol{(D_3(n),\mu_3(n))}          &\boldsymbol{I_2(n)\times\triv}          &\boldsymbol{I_2(n)\times\triv}  &\mu_3(n)       &1  \\  \hline 
\boldsymbol{D_3(n)}  &                           &\triv[3]    &\boldsymbol{D_3(n)}         &2  \\  \hline 
\boldsymbol{\J D_3(n),} \textbf{ even }\boldsymbol{n}    &\boldsymbol{I_2(n)\times A_1}           &\boldsymbol{I_2(n)\times A_1}   &D_3(n)         &1  \\  \hline 

\J D_3(n),\text{ odd }n     &                           &I_2(n)\times\triv  &D_3(n)         &2  \\  \hline

(D_3(2n),D_3(n)),\text{ even }n     &                   &I_2(n)\times \triv                &D_3(n)         &2 \\  \hline 
\boldsymbol{(D_3(2n),D_3(n)),\textbf{ odd }n}      &\boldsymbol{I_2(n)\times A_1}  &\boldsymbol{I_2(n)\times A_1}   &D_3(n)         &1  \\  \hline 
\boldsymbol{T_3}     &                           &\triv[3]           &\boldsymbol{T_3} &2  \\  \hline 
\J T_3                      &                           &A_1\times A_1\times A_1&T_3        &2 \\  \hline 
\boldsymbol{(Oct_3,T_3)}                 &\boldsymbol{A_3}                        &\boldsymbol{A_3}                &T_3            &1  \\  \hline 
\boldsymbol{Oct_3}   &                           &\triv[3]      &\boldsymbol{Oct_3}          &2  \\  \hline 

\boldsymbol{\J Oct_3}                    &\boldsymbol{BC_3}                      &\boldsymbol{BC_3}              &Oct_3          &1  \\  \hline 
\boldsymbol{Ico_3}         &               &\triv[3]           &\boldsymbol{Ico_3}          &2  \\  \hline 
\boldsymbol{\J Ico_3}                   &\boldsymbol{H_3}                       &\boldsymbol{H_3}                &Ico_3          &1  \\  \hline 
\end{tabular}
\vspace{0.5cm}

\caption{Rank $3$ Linear Groups, rearranged in order of $\mathcal R_2G$. We specify with if the group is strictly generated in a certain codimension.}
\label{allGroupData}
\end{table}

\begin{acknowledgement}
    
    The first author has been partially supported 
    by the \emph{Zelevinsky Research Instructor Fund} and currently it is supported by the \emph{Knut and Alice Wallenberg Fundation} and by the \emph{Royal Swedish Academy of Science}.
\end{acknowledgement}


\setcounter{TheoM}{0}
\section{Preliminaries and generalities}\label{sec-preliminaries}

In this section we briefly review some basic terminology and results. 

\subsection{Posets}\label{sec:posets}
The mathematical objects we are going to study are partially ordered sets, called {\em posets}.
A standard reference for a complete and comprehensive introduction is \cite[Chapter 2]{Stanley}. 

A poset $(P,\leq)$ is a set $P$ together with a partial order relation $\leq$. In what follows the set $P$ will always be a finite set. 
If $p,q\in P$ and $p\leq q$, the {\em closed interval} is the subset $[p,q]\define \{r\in P \mid p\leq r\leq q\}$. 
A {\em lattice} is a poset where every pair of elements has a unique minimal upper bound as well as a unique maximal lower bound.
%

A \emph{chain} in $P$ is an ordered sequence $p_1<p_2<\ldots <p_k$ of elements in $P$.
We denote by $(\Delta (P),\subseteq)$ the poset of all chains of $P$, ordered by inclusion. 
If the poset $P$ has a unique minimal element $\hat 0$ and a unique maximal element $\hat 1$, we define the {\em reduced order complex of P} as 
  	\def\roc{\widehat{\Delta}} 
  	\[
  		\roc (P) \define  \Delta(P\setminus \{\hat 0, \hat 1\}),
  	\]
which is naturally a simplicial complex.


  

\subsection{Subspace Arrangements}
In this work, we study posets arising from {\em linear subspace arrangements}. 
Let $V$ be a real vector space of dimension $d$.
An {\em arrangement of linear subspaces} in $V$ is a finite collection $\mathcal{A}$ of linear subspaces of $V$. 
The poset of intersections associated to $\mathcal A$ is the set
\begin{equation*}
    \mathcal{L}(\mathcal{A})\define \left\{ \bigcap_{s\in S} s \mid S\subseteq \mathcal{A} \right\}
\end{equation*}
ordered by reverse inclusion: for $x,y\in \mathcal{L}(\mathcal{A})$, $x\leq y$ if $x\supseteq y$. 
The poset $\mathcal L(\mathcal A)$ is a lattice and it has a unique minimal element $\hat{0}=V$, that is the intersection of the empty family, and a unique maximal element $\hat{1}=\bigcap_{s\in \mathcal{A}} s$. (A subspace arrangement with a unique maximal element is called {\em central}.)
We say that the arrangement $\mathcal A$ is {\em essential} if $\dim\hat 1=0$.
%
%
%
%
%

  %


\subsection{The Subspace Arrangement of a Representation}
Let $G\xhookrightarrow{\rho} GL(V)$ be a finite dimensional real representation of $G$.
The normalizer $N_G(H)$ of a subgroup $H\subset G$ (resp. the stabilizer of a subset $S\subset V$) is denoted by \begin{align*}
    N_G(H)                      &\define\left\{g\in G\mid gH=Hg\right\}\\ 
    \operatorname{Stab}_\rho(S) &\define\{g\in G \mid gS\subseteq S\}
\end{align*}
If $v$ is a vector in $V$ and $S$ the singleton $\{v\}$, then we omit the set brackets by writing $\operatorname{Stab}_{\rho}(\{v\})$ as $\operatorname{Stab}_{\rho}(v)$. 
The latter is the isotropy group in $v$, that is the subgroup of $G$ that stabilizes the point $v$.

\noindent
Following \cite{Ekedahl-inv}, for any subgroup $H$ of $G$ we define:
\begin{align*}
    V^H_{\rho}  &\define \{v\in V\mid H\subseteq \operatorname{Stab}_{\rho}(v)\}\\
                &=\left\{v\in V\mmid Hv=v\right\}\subseteq V.
\end{align*}
In other words, $V^H_{\rho}$ is the subspace containing all the points fixed (at least) by all elements of $H$.

\begin{defi}
For $L$ a subspace of $V$, we denote by $\operatorname{Fix}_{\rho}(L)$ the \emph{fixator} of $L$: 
\[
	\operatorname{Fix}_{\rho}(L)\define \{g\in G \mid\forall\,l\in L,\,gl = l\}\subseteq G.
\]
\end{defi}

\begin{defi}
A subgroup $H\subset G$ is called a \emph{stabilizer subgroup} with respect to the representation $G\xhookrightarrow{\rho} GL(V)$ if $\operatorname{Fix}_{\rho}(V_\rho^H)=H$.
Equivalently, $H$ is the largest subgroup of $G$ fixing $V_\rho^H$ point-wise.
\end{defi}

If $H$ is a stabilizer subgroup with respect to the representation $\rho$, then $\operatorname{Fix}_{\rho}(L)\subseteq \operatorname{Stab}_{\rho}(L)$ and $\operatorname{Stab}_{\rho}(V^H)=N_G(H)$.

\begin{defi}\label{defi:subspace-arrangement}
The subspace arrangement $\mathcal A^\rho$ corresponding to the representation $\rho$ is
\[
\mathcal{A}^{\rho}  \define\left\{V^{H}_{\rho} \mmid \{e\}\neq H \mbox{ is a stabilizer subgroup with respect to } \rho\right\}.
\]
We also denote set of codimention-$i$ subspaces by
\[
	\mathcal{A}_i^{\rho}\define\left\{V^{H}_{\rho}\in\mathcal A^\rho\mmid \operatorname{codim} V^H_\rho=i \right\}.
\]
Moreover we denote by $U_G$ the open complement of $\mathcal A^\rho$ in the vector space $V$.
\end{defi}
\noindent
A complete description of such arrangements can be found in \cite{DelucchiMartinoBW}. 

It is a simple observation (see \cite[Section 1.2]{DelucchiMartinoBW}) that the lattice of intersection $\mathcal L(\mathcal A^\rho)$ of $\mathcal{A}^{\rho}$ is precisely $\mathcal{A}^{\rho}$ together with $V=V^e$; thus 
\begin{equation}\label{eq:latticeOfIntersection}
\mathcal{L}(\mathcal{A}^{\rho})=\{V^{H}_{\rho} \mid H \mbox{ is a stabilizer subgroup of } G \mbox{ w.r.t. } \rho\}.
\end{equation}

\noindent
The atoms of such lattice are precisely the maximal subspace by inclusion.

The reduced order complex of $\mathcal{L}(\mathcal{A}^{\rho})$ has important geometric and group theoretic properties, see \cite[Section 3]{DelucchiMartinoBW,Ekedahl-inv}. 
We are going to show in \Cref{sec:cohomology-computations} that $\roc \mathcal{L}(\mathcal{A}^{\rho})$ has a crucial role in the computation of the cohomology of $U_G$.

\section{Finite groups generated in high codimension}
In this section we describe the class of finite groups that we study in this work.
These groups are a natural generalization of finite reflection groups.

\vspace{0.1cm}

All along this section, $G$ is a finite linear group $G\xhookrightarrow{\rho} GL(V)$. We denote by $|G|$ the underlying abstract group of $G$.

\vspace{0.1cm}

Let us set some useful notations. The \emph{rank} $rk(G)$ of $G$ is defined to be the dimension of $V$.
%
%
%
Given two linear groups $G_1\xhookrightarrow{\rho_1} GL(V_1)$ and $G_2\xhookrightarrow{\rho_2} GL(V_2)$ we define the direct product $G_1\times G_2$ as a linear group in $GL(V_1\times V_2)$ with representation $\rho_1\times \rho_2$.
We denote by $\triv[n]$ the trivial group $\{e\}$, viewed as a linear group in $GL_n(\mathbb R)$, i.e.,
\[
	\triv[n]:=\left\{\begin{bmatrix}
    1 & &  &   &  \\
      & 1 &  &  &  \\
      &  & \ddots &  &  \\
      &  & &  & 1
\end{bmatrix} \right\}\subset GL_n(\mathbb R).
\]
Observe that $rk(\triv[n])=n$, and $rk(H\times G)=rk(H)+rk(G)$.




\begin{defi}
We say that $G$ is \emph{essential} if the associated subspace arrangement $\mathcal A^\rho$ is essential.
\end{defi}
It is clear that $\triv[n]$ is non-essential.
Further, if a non-trivial group $G$ is non-essential, there exists an essential subgroup $H\subset G$ such that $G=H\times \triv[d]$ for some $d$, and $|G|\cong|H|$.

\begin{lem}
\label{lem:equivariant}
Let $G$ be a finite linear group.
Then there exists a $|G|$-equivariant positive definite bilinear form $\omega$ on $V$.
In particular, we may assume $G\subset O(V)$.
\end{lem}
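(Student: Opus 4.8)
The plan is to prove the existence of a $|G|$-invariant positive definite bilinear form by the standard averaging (Weyl unitarian) trick, and then deduce that $G$ can be conjugated into the orthogonal group of that form. Concretely, I would start with \emph{any} positive definite symmetric bilinear form $\beta$ on $V$ (for instance the standard dot product after choosing a basis), and average it over the finite group to kill the non-invariance. Since $G$ is finite, the sum is finite and no measure-theoretic input is needed.

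First I would define
\[
	\omega(u,v)\define \frac{1}{|G|}\sum_{g\in G}\beta(gu,gv),\qquad u,v\in V.
\]
Then I would check the three required properties in turn. Bilinearity and symmetry of $\omega$ are immediate from the corresponding properties of $\beta$, since each summand $\beta(g\,\cdot\,,g\,\cdot\,)$ is bilinear and symmetric, and a finite average of such forms is again bilinear and symmetric. Positive definiteness follows because for $v\neq 0$ each term $\beta(gv,gv)$ is strictly positive (as $g$ is invertible, $gv\neq0$, and $\beta$ is positive definite), so the average is strictly positive as well. Finally, $|G|$-invariance is the point of the averaging: for any $h\in G$,
\[
	\omega(hu,hv)=\frac{1}{|G|}\sum_{g\in G}\beta(ghu,ghv)=\frac{1}{|G|}\sum_{g'\in G}\beta(g'u,g'v)=\omega(u,v),
\]
where I re-indexed the sum using the bijection $g\mapsto g'=gh$ of $G$ onto itself.

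For the last sentence ``we may assume $G\subset O(V)$'', I would argue that $\omega$ being positive definite symmetric admits an orthonormal basis (Gram--Schmidt, or equivalently $\omega$ is congruent to the standard form). In such a basis $\omega$ becomes the standard inner product, and the invariance $\omega(gu,gv)=\omega(u,v)$ says precisely that each $\rho(g)$ preserves the standard inner product, i.e.\ lies in $O(V)$. Passing from the original basis to the orthonormal one is a change of basis, which replaces $\rho$ by a conjugate representation; since we are free to work up to such linear change of coordinates, we may indeed assume $G\subset O(V)$ from the outset.

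I do not expect any genuine obstacle here: the only subtlety worth stating explicitly is that the argument uses $|G|<\infty$ (so the average is a legitimate finite linear combination) and that $\operatorname{char}\field=0$, so that dividing by $|G|$ is allowed and each positive summand contributes — this is consistent with the standing assumption that we work over $\mathbb{R}$. The mildest care is in phrasing the final reduction: the invariance gives $G\subset O(V,\omega)$ for the built form, and only after diagonalizing $\omega$ do we identify $O(V,\omega)$ with the usual orthogonal group, which is why the conclusion is stated as ``we may assume''.
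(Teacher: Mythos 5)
Your proof is correct and uses essentially the same argument as the paper: averaging an arbitrary positive definite form over the finite group to produce a $G$-invariant one (the paper sums $g\cdot\theta$ without the harmless $\nicefrac{1}{|G|}$ normalization). Your additional details — verifying positive definiteness, the re-indexing for invariance, and the Gram--Schmidt change of basis justifying ``we may assume $G\subset O(V)$'' — are all consistent with what the paper leaves implicit.
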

\begin{proof}
The group $G$ acts on the set of bilinear forms on $V$ via:\[
    (g\cdot\theta)(v,w)=\theta(g^{-1}v,g^{-1}w)
\]
Fix some positive definite bilinear form $\theta$ on $V$.
Then $$\omega=\sum\limits_{g\in G}g\cdot\theta$$ is a $G$-equivariant positive definite bilinear form on $V$.
\end{proof}

\begin{defi}
An element $r\in GL(V)$ is called a {\em reflection} if it has finite order and it fixes a subspace of codimension one, called the {\em reflection hyperplane}.
Following the notations given in \Cref{sec-preliminaries}, the reflection hyperplane is denoted $V^{\langle r\rangle}$.
\end{defi}

\begin{defi}
An element $g\in GL(V)$ is called a {\em rotation} if it has finite order, and further $\operatorname{codim}V^{\langle g\rangle}=2$.
\end{defi}

\begin{defi}
A group $G\subseteq GL(V)$ is a reflection group if it is generated by reflections, i.e.,
$    G=\left\langle g\mmid\operatorname{codim}{V^{\langle g\rangle}}=1\right\rangle$.
\end{defi}

\begin{defi}\label{def-genetated-in-codimention-k}
We say that a finite linear group $G$ in ${GL}(V)$ is generated in codimension $k$ if 
$G$ is generated by elements of codimension at most $k$, i.e., $G=\left\langle g\mmid \operatorname{codim}{V^{\langle g\rangle}}\leq k\right\rangle$.
\end{defi}

We want to emphasize that being a \emph{reflection} is a property of the representation $\rho$, i.e., of the linear group $G$ and not of the abstract group $|G|$ itself. 
For instance, the group $\nicefrac{\mathbb Z}{2\mathbb Z}$, viewed as the subgroup  of $GL_1(\mathbb{R})$ generated by $-1$, is a reflection group; on the contrary, $\nicefrac{\mathbb{Z}}{2\mathbb{Z}}$ is not a reflection group if we embed it in $GL_2(\mathbb{R})$ with generator $-\operatorname{Id}$, the linear antipodal map. 
In the latter case $\nicefrac{\mathbb{Z}}{2\mathbb{Z}}$ is generated in codimension two.
Other interesting examples of this phenomena are discussed later; nevertheless we suggest the reader have a look at \Cref{sec:mixed-d3n-mu3n,sec:D3n-in-GL-3}.

For our goal, we want to distinguish between abstract groups, for example $\nicefrac{\mathbb{Z}}{2\mathbb{Z}}$, which admit a representation as a linear group generated in low codimension, and abstract groups, like 
$\nicefrac{\mathbb Z}{n\mathbb Z}$ that \emph{only} admit representations in higher codimensions.

\begin{defi}
We say that $G$ is \emph{strictly} generated in codimension $k$ if $G=\left\langle g\mmid \operatorname{codim}V^{\langle g\rangle} = k\right\rangle$.
\end{defi}

\begin{pro}\label{Pro-gen-codim-2-in-high-dimension}
Let $G_1,\dots, G_r$ be finite linear groups (strictly) generated in codimension $k$.
Then the product group $G_1\times \dots \times G_n$    is (strictly) generated in  codimension $k$.
\end{pro}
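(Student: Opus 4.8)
The plan is to reduce everything to a single additivity formula for codimensions of fixed spaces under direct products. Write $V = V_1 \oplus \dots \oplus V_r$ for the ambient space of the product representation $\rho_1 \times \dots \times \rho_r$, on which $G := G_1 \times \dots \times G_r$ acts diagonally. First I would record the elementary fact that for $g = (g_1, \dots, g_r) \in G$ a vector $(v_1, \dots, v_r)$ is fixed by $g$ if and only if $g_i v_i = v_i$ for every $i$; consequently
\[
    V^{\langle g\rangle} = V_1^{\langle g_1\rangle} \oplus \dots \oplus V_r^{\langle g_r\rangle},
    \qquad
    \operatorname{codim}V^{\langle g\rangle} = \sum_{i=1}^r \operatorname{codim}V_i^{\langle g_i\rangle}.
\]
This additivity of codimension is the only computation the argument needs.

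Next I would use the standard embeddings $\iota_i \colon G_i \hookrightarrow G$ sending $g_i$ to the tuple $\tilde g_i = (e,\dots,e,g_i,e,\dots,e)$ that is the identity outside the $i$-th slot. Two observations: (i) the images $\iota_1(G_1),\dots,\iota_r(G_r)$ together generate $G$, since any $(g_1,\dots,g_r)$ factors as $\tilde g_1 \tilde g_2 \cdots \tilde g_r$; and (ii) applying the additivity formula to $\tilde g_i$---whose coordinates outside the $i$-th slot are the identity and hence fix everything---gives $\operatorname{codim}V^{\langle \tilde g_i\rangle} = \operatorname{codim}V_i^{\langle g_i\rangle}$. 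Thus each embedding $\iota_i$ preserves the codimension of the fixed space.

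With these in hand both claims follow immediately and simultaneously. Suppose each $G_i$ is generated in codimension $k$ (resp. strictly), so $G_i = \langle g_i \mid \operatorname{codim}V_i^{\langle g_i\rangle} \leq k\rangle$ (resp. with equality). By (ii) the image under $\iota_i$ of such a generating set consists of elements $\tilde g_i$ with $\operatorname{codim}V^{\langle \tilde g_i\rangle} \leq k$ (resp. $= k$), and this image generates $\iota_i(G_i)$. Ranging over all $i$ and invoking (i), these elements generate all of $G$; since each has codimension of its fixed space $\leq k$ (resp. $= k$), the product $G$ is generated (resp. strictly generated) in codimension $k$.

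There is no genuine obstacle here: the whole content is the codimension-additivity formula, which holds because the diagonal action makes the fixed-point condition decouple across the summands $V_i$. The only point requiring a moment's care is the strict case, where one must confirm that embedding a codimension-$k$ generator $g_i$ of a factor produces an element $\tilde g_i$ of codimension exactly $k$---not merely at most $k$---in the product; this is precisely observation (ii), since the identity coordinates contribute zero codimension. An induction on $r$ splitting off one factor at a time would give an equivalent proof, but the direct argument above handles all $r$ factors at once.
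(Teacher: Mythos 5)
Your proof is correct and follows essentially the same route as the paper: both arguments embed each factor $G_i$ into the product via $g_i \mapsto (1,\dots,g_i,\dots,1)$, observe that this embedding preserves the codimension of fixed spaces (your observation (ii) is exactly the paper's displayed equation), and conclude since the embedded subgroups generate the product. Your explicit additivity formula $\operatorname{codim}V^{\langle g\rangle} = \sum_i \operatorname{codim}V_i^{\langle g_i\rangle}$ is a slightly more general statement than the paper records, but it is used in the same way.
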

\begin{proof}
Let $G_1,\dots, G_r$ be finite linear groups of rank $j_1,\dots,j_r$ respectively (strictly) generated in codimension $k$, and let $V_i$ denote the vector space underlying the linear group $G_i$. 
We identify the product group $G=G_1\times\dots\times G_r$ as a subgroup of $GL(V)$, where $V=V_1\times\dots\times V_r$.
Observe that\[
    \operatorname{codim}_{V_i}V_i^{\langle g_i\rangle}=\operatorname{codim}_VV^{\langle(1,\dots,g_i,\dots,1)\rangle}
\]
It follows that the subgroup $H_i=\triv[j_1]\times\dots\times G_i\times\dots\triv[j_r]$ is (strictly) generated in codimension $i$.
Since $G$ is generated by its subgroups $H_i$, we deduce that $G$ is also (strictly) generated in codimension $i$.
\end{proof}

\begin{defi}
An abstract group $|G|$ is \emph{minimally generated} in codimension $k$ if $k$ is the minimal integer for which there exists a faithful representation $\rho:|G|\rightarrow GL(V)$ such that $\rho(|G|)$ is strictly generated in codimension $k$.
\end{defi}


For instance, the alternating subgroup $|T_3|$ of the symmetric group on four elements is minimally generated in codimension two.
In \Cref{sec:T_3}, we have shown that $|T_3|$ is strictly generated in codimension two; moreover by the classification of finite reflection groups of rank three \cite{MR1890629}, we know that the only reflection group of order $12$ is $I_2(6)$.
One simply verifies that $|I_2(6)|\not\cong|T_3|$ (for instance, $|I_2(6)|$ has a normal subgroup of order $6$, while $|T_3|$ does not). 
See also \Cref{sec:ReflectionSmallRank}.

\subsection{Strictly Generated Subgroups}
Following \Cref{defi:subspace-arrangement}, we set $\mathcal A^G_i  \define\left\{W\in\mathcal A^G\mmid\operatorname{codim} W=i\right\}$. 
We define the subgroups $\mathcal R_i G$,
\begin{align*}
    \mathcal R_i G  &\define\left\langle g\in G\mmid\dim V^{\langle g\rangle}=i\right\rangle.
\end{align*}
In particular, $\mathcal R_1 G$ (resp. $\mathcal R_2 G$) is the subgroup of $G$ generated by the set of reflections (resp. rotations).
Observe that for $g,h\in G$, we have $V^{\langle hgh^{-1}\rangle}=hV^{\langle g\rangle}$, hence $\dim V^{\langle g\rangle}=\dim V^{\langle hgh^{-1}\rangle}$.
It follows that the $\mathcal R_i G$ are normal subgroups of $G$.

\subsection{Reflection Groups of Small Rank}\label{sec:ReflectionSmallRank}
Linear reflection groups were classified up to conjugacy by Coxeter \cite{MR1503182}.
We list here all the linear reflection groups of $GL_d(\mathbb R)$ for $d=1,2,3$, following the notation of \cite{MR1890629}.

The linear group $A_1=\{\pm 1\}\subset GL_1(\mathbb R)$, generated by reflection across the origin $O$ is the unique non-trivial reflection group in rank $1$. 

Consider the set $P_n$ containing $n$ equi-distant points on the circle of radius one.
The linear group $I_2(n)$, for $n\geq 1$, is defined to be the stabilizer subgroup of $P_n$ in $O_2(\mathbb R)$, i.e.,
\[ 
    I_2(n)=\left\{g\in O_2(\mathbb R)\mmid gP_n=P_n\right\}.
\]
Observe that $I_2(1)=A_1\times\triv$ and $I_2(2)=A_1\times A_1$.
Following convention, we set $A_2=I_2(3)$ and $BC_2=I_2(4)$.
The groups $I_2(n)$, for $n\geq 2$, are precisely all the essential groups of rank $2$.
The non-essential ones are $I_2(1)$ and $\triv[2]$.





\section{Groups of Rank Two}\label{sec:rank-two}
Consider a non-trivial group $G$ of rank $1$.
Any element $g\in G$ either fixes the whole ambient space $\mathbb{R}$, or is a reflection. 
We deduce that $G=A_1$. 
Therefore, we start by studying groups of rank two. 
We use the classification of linear Coxeter groups by \emph{Coxeter diagrams} \cite{MR1890629}; we have set our notations in \Cref{sec:ReflectionSmallRank}.




\begin{pro}\label{prop-GL2-real}
Let $G\subset GL_2(\mathbb R)$ be a finite linear group which is not generated in codimension $1$.
Then $\subweyl G=\triv[2]$, and further\[
    G=\mu_2(n)\define\mathcal R_2I_2(n),
\]
for some $n\geq 2$.
The arrangement $\mathcal{A}^{G}$ contains only the origin and the full space $V=\mathbb R^2$, i.e., $\mathcal{A}^{G}=\{O < V\}$. 
\end{pro}
\begin{proof}
Observe that $|\mu_2(n)|=\nicefrac{\mathbb Z}{n\mathbb Z}$.
If $\subweyl G=\triv[2]$, then $G$ is the cyclic group generated by a single rotation $\theta$.
Consequently, we have $G=\mathcal R_2I_2(n)$, where $n$ is the order of $\theta$.

It remains to show that $\subweyl G=\triv[2]$.
Suppose this is false.
By \Cref{sec:ReflectionSmallRank}, we have $G=I_2(n)$ for some $n\geq 1$.
%
%
Then $\mathcal A_1^G=\mathcal A_1^{\subweyl G}$ contains $n$ lines, with consecutive lines being separated by angle $\nicefrac\pi n$.
Since $G$ preserves both the circle of radius one (since $G\subset O_2(\mathbb R)$), and the sub-arrangement $\mathcal A_1^G$, it also preserves their intersection $P_{2n}$. 
Now, the group of symmetries of $P_{2n}$ being $I_2(2n)$, we have $I_2(n)\subsetneq G\subset I_2(2n)$.
It follows that $G=I_2(2n)$, since $I_2(n)$ has index $2$ in $I_2(2n)$, hence is a maximal proper subgroup. This contradicts the assumption that $G=I_2(n)$.
%
\end{proof}

For $n>2$, $|\mu_2(n)|\cong \nicefrac{\mathbb Z}{n\mathbb Z}$ is not a Coxeter group; hence, $\nicefrac{\mathbb Z}{n\mathbb Z}$ has no faithful real reflection representation.
On the other hand, the previous proposition shows that there exists a faithful representation for which $\nicefrac{\mathbb Z}{n\mathbb Z}$ is strictly generated in codimension two.
Therefore this is the first example of a finite group that is minimally generated in real codimension two.

\begin{Cor}\label{Cor-mu-2-n>2isNeatInCod2}
For $n>2$, the abstract group $\nicefrac{\mathbb Z}{n\mathbb Z}$ is minimally generated in codimension two. 
\end{Cor}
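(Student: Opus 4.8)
The goal is \Cref{Cor-mu-2-n>2isNeatInCod2}: for $n>2$, the abstract group $\nicefrac{\mathbb Z}{n\mathbb Z}$ is minimally generated in codimension two. By the definition of \emph{minimally generated}, I must show two things. First, there exists a faithful representation $\rho$ of $\nicefrac{\mathbb Z}{n\mathbb Z}$ such that $\rho(\nicefrac{\mathbb Z}{n\mathbb Z})$ is strictly generated in codimension two; this exhibits the integer $2$ as an achievable codimension. Second, no smaller codimension works, i.e. codimension one is impossible, so that $2$ is genuinely the minimal such integer.

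The plan is to produce both halves from \Cref{prop-GL2-real}. For the existence half, I would take $\rho$ to be the standard realization $\mu_2(n)=\mathcal R_2 I_2(n)\subset GL_2(\mathbb R)$ as a cyclic group generated by a single rotation $\theta$ of order $n$. \Cref{prop-GL2-real} already records that this $\mu_2(n)$ has $\subweyl{\mu_2(n)}=\triv[2]$ and is generated by the rotation $\theta$, whose fixed space is just the origin, of codimension two; hence $\mu_2(n)=\mathcal R_2\mu_2(n)$ is strictly generated in codimension two. Since $|\mu_2(n)|\cong\nicefrac{\mathbb Z}{n\mathbb Z}$, this $\rho$ is a faithful representation realizing codimension two.

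For the minimality half, I must rule out codimension one. Strictly generated in codimension one means generated by reflections, i.e. a reflection group in the sense defined earlier. Here is where the hypothesis $n>2$ is essential: I would invoke the remark immediately preceding the corollary, namely that for $n>2$ the abstract group $\nicefrac{\mathbb Z}{n\mathbb Z}$ is not a Coxeter group, so it admits no faithful real reflection representation. Concretely, a finite reflection group is a Coxeter group, and the only cyclic Coxeter groups are $\nicefrac{\mathbb Z}{2\mathbb Z}$ (which is $A_1$); since $\nicefrac{\mathbb Z}{n\mathbb Z}\not\cong\nicefrac{\mathbb Z}{2\mathbb Z}$ for $n>2$, there is no faithful $\rho$ with $\rho(\nicefrac{\mathbb Z}{n\mathbb Z})$ strictly generated in codimension one. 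Combining the two halves, codimension two is attained and codimension one is not, so $2$ is the minimal integer, completing the proof.

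The only subtle point — the main obstacle — is being careful about what must be excluded for the minimality claim. The definition only asks that codimension one be unachievable by \emph{any} faithful representation, not merely by the particular one chosen above, so I cannot simply observe that the chosen $\mu_2(n)$ happens to have no reflections; I genuinely need the representation-independent fact that $\nicefrac{\mathbb Z}{n\mathbb Z}$ has no faithful reflection representation at all. That is exactly the content supplied by the Coxeter-group classification cited just before the corollary, so the argument reduces to quoting it; no further computation is required.
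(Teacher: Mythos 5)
Your proof is correct and takes essentially the same approach as the paper: the paper also combines \Cref{prop-GL2-real} (the faithful rotation representation $\mu_2(n)\subset GL_2(\mathbb R)$, strictly generated in codimension two) with the observation that for $n>2$ the abstract group $\nicefrac{\mathbb Z}{n\mathbb Z}$ is not a Coxeter group and therefore admits no faithful real reflection representation, so codimension one is unachievable by \emph{any} faithful representation. Your explicit remark that minimality must be checked in a representation-independent way is precisely the content of the paper's preceding discussion, just spelled out more carefully.
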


	
	

\section{Groups of Higher Rank}\label{sec:higherRealDimension}

In this section we study the subspace arrangement of a group of higher rank. 
We start with a simple observation.

\begin{lem}\label{lemma-rotation-and-reflections}
Let $g$ be an finite order element of ${GL}_n(\mathbb{R})$. 
Then $g$ is a product of reflections and rotations.
\end{lem}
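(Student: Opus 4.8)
The plan is to reduce to the orthogonal normal form of $g$ and then read off the reflection and rotation factors block by block.

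First I would make $g$ orthogonal. Since $g$ has finite order, the cyclic group $\langle g\rangle$ is finite, so \Cref{lem:equivariant} supplies an inner product on $\mathbb{R}^n$ for which $g$ lies in $O(n)$; equivalently, after conjugating by a suitable element of $GL_n(\mathbb{R})$ we may assume $g\in O(n)$. This reduction is harmless for the statement, because conjugation preserves the order of an element and, since $V^{\langle hgh^{-1}\rangle}=hV^{\langle g\rangle}$, preserves the codimension of its fixed space; hence $hgh^{-1}$ is a product of reflections and rotations if and only if $g$ is.

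Next I would invoke the real normal form for orthogonal transformations: there is an orthonormal basis of $\mathbb{R}^n$ in which $g$ is block diagonal, each block being either a $1\times 1$ block equal to $+1$ or $-1$, or a $2\times 2$ rotation block $\begin{pmatrix}\cos\theta & -\sin\theta\\ \sin\theta & \cos\theta\end{pmatrix}$ with $\theta\not\equiv 0\pmod{2\pi}$. Because these blocks act on pairwise orthogonal coordinate subspaces, the associated \emph{elementary} transformations, obtained by keeping a single block and setting every other block to the identity, commute with one another, and their product is $g$. I would then identify each elementary factor against the definitions preceding the lemma: a single $-1$ block fixes exactly the orthogonal complement of one basis vector, a hyperplane, hence is a reflection; a single nontrivial $2\times 2$ rotation block (including the case $\theta=\pi$, where the block is $-I_2$) fixes exactly the orthogonal complement of the corresponding $2$-plane, a subspace of codimension two, hence is a rotation. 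Each such factor has finite order, since if $g^m=\operatorname{Id}$ then every elementary factor raised to the $m$-th power is the identity; and the $+1$ blocks contribute only trivial factors, which may be discarded. Writing $g$ as the product of these commuting elementary factors then exhibits it as a product of reflections and rotations.

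The only genuinely delicate point, and hence the step I would treat most carefully, is matching the classical orthogonal normal form to the paper's specific definitions of \emph{reflection} and \emph{rotation} (finite order together with fixed-space codimension one, respectively two), in particular checking that the rotation blocks always yield codimension exactly two, including the degenerate angle $\theta=\pi$, and that every elementary factor inherits finite order from $g$. Beyond this bookkeeping the argument is routine linear algebra.
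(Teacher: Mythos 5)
Your proposal is correct and follows essentially the same route as the paper's proof: reduce to $g\in O_n(\mathbb{R})$ using finite order, pass to the orthogonal block normal form with $\pm 1$ and $2\times 2$ rotation blocks, and factor $g$ into commuting elementary matrices, each a reflection or a rotation. Your version merely adds bookkeeping the paper leaves implicit (conjugation-invariance of the statement, finite order of each factor, and the $\theta=\pi$ case), which is fine but not a different argument.
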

\begin{proof}
Since $\langle g\rangle$ is finite, we may assume $g\in O_n(\mathbb R)$.
It follows that $g$ is conjugate to some block matrix:\[
    g=\begin{pmatrix}R_1&&&&\\
                        &\ddots&&&\\
                        &&R_d&&\\
                        &&&-I_k&\\
                        &&&&I_l
    \end{pmatrix}
\]
where $I_k, I_l$ are the identity matrix of size $k,l$ resp., and \[
    R_i =\begin{pmatrix}\cos\theta_i    &\sin\theta_i\\
                        -\sin\theta_i   &\cos\theta_i
        \end{pmatrix}
\]
We can now write $g$ as product of matrices of the form
\begin{align*}\begin{pmatrix}I_a&&\\
                &-1&\\
                &&I_b
\end{pmatrix}&&\text{and}\qquad\qquad&
\begin{pmatrix}I_{a'}&&\\
                &R_i&\\
                &&I_{b'}
\end{pmatrix}
\end{align*}
which correspond to reflections and rotations respectively. (Observe that $a+b=n-1$ and $a'+b'=n-2$.)
\end{proof}

We note that the arrangement of a group generated in codimension $i>2$ might contain subspaces in lower codimension.
Take for instance the group $G$ generated by the ten diagonal matrices having in the diagonal only two $1$'s and three $-1$'s. 
This group is generated in codimension three, but 
\[
 \begin{bmatrix}
    -1 &  &  &  &  \\
      & -1&  &  & \\
      &  &-1 &  &  \\
      &  &  & 1 &  \\
      &  &  &   & 1  
\end{bmatrix}
\begin{bmatrix}
    1 &  &  &  &  \\
      & -1&  &  & \\
      &  &-1 &  &  \\
      &  &  & -1 &  \\
      &  &  &   & 1  
\end{bmatrix}
=
\begin{bmatrix}
    -1 &  &  &  &  \\
      & 1&  &  & \\
      &  &1 &  &  \\
      &  &  & -1 &  \\
      &  &  &   & 1  
\end{bmatrix}
\]
is a rotation and therefore the subspace arrangement contains a codimension two subspace.

One of the perks of groups strictly generated in codimension two is that there are no reflection hyperplanes in the corresponding subspace arrangement.

\begin{pro}\label{pro:no-cod-one}
If $G\subseteq GL_n(\mathbb{R})$ and $G$ is strictly generated in codimension two, then 
$\mathcal{A}^G_1=\emptyset$.
\end{pro}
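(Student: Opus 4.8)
The plan is to argue by contradiction, exploiting the fact that $\mathcal{R}_2 G = G$ when $G$ is strictly generated in codimension two, together with the normality of $\mathcal{R}_1 G$ inside $G$. Suppose for contradiction that $\mathcal{A}^G_1 \neq \emptyset$, so there exists a reflection hyperplane $H = V^{\langle r\rangle}$ for some reflection $r \in G$. By \Cref{lem:equivariant} we may assume $G \subseteq O(V)$, so every element is an orthogonal transformation; in particular every reflection is the orthogonal reflection across its fixed hyperplane, and hence has order two with determinant $-1$.

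First I would observe that the existence of a reflection means $\mathcal{R}_1 G \neq \{e\}$, and I would like to leverage that $\mathcal{R}_1 G$ is a normal subgroup of $G$ (as noted just after the definition of the $\mathcal{R}_i G$). The key structural point is the determinant: every rotation (codimension-two element) lies in $SO(V)$ and hence has determinant $+1$, whereas a reflection has determinant $-1$. Since $G$ is strictly generated in codimension two, $G$ is generated by rotations, so $G \subseteq SO(V)$ and every element of $G$ has determinant $+1$. But a reflection $r \in G$ would have $\det r = -1$, a contradiction. This is the cleanest route, and I expect it to be the main line of the argument: the determinant is a homomorphism $G \to \{\pm 1\}$ that kills all generators (rotations) but would not kill a reflection.

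The one subtlety I want to be careful about is the definition of ``codimension two'' for the generators: a rotation fixes a codimension-two subspace and, being conjugate in $O(V)$ to a block-diagonal matrix with a single genuine rotation block $R_i$ and identity elsewhere (as in the proof of \Cref{lemma-rotation-and-reflections}), it indeed has determinant equal to $\det R_i = \cos^2\theta_i + \sin^2\theta_i = +1$. So every generator lies in $SO(V)$, whence $G = \langle \text{rotations}\rangle \subseteq SO(V)$. Thus no element of $G$ can be a reflection, since reflections have determinant $-1$; therefore $\mathcal{A}^G_1 = \emptyset$.

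I expect the main (and only) obstacle to be purely bookkeeping: ensuring that what the paper calls an element ``generated in codimension two'' is exactly an orthogonal transformation whose sole nontrivial action is a planar rotation, so that its determinant is unambiguously $+1$. Once that is pinned down via the reduction to $O(V)$ and the block-form description already established, the determinant argument closes immediately. An alternative, more geometric route — showing directly that no product of rotations can fix a hyperplane — would also work but is strictly harder, since it requires tracking fixed spaces under composition; the determinant homomorphism bypasses this entirely, which is why I would present it as the primary proof.
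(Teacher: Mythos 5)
Your proposal is correct and follows essentially the same route as the paper's proof: reduce to $G\subseteq O(V)$ via \Cref{lem:equivariant}, observe that rotations have determinant $+1$ so that $G\subseteq SO(V)$ (the paper cites \Cref{flgso3} for this step), and then derive a contradiction because a codimension-one subspace in $\mathcal{A}^G$ forces a reflection in $G$, which has determinant $-1$. Your extra verification of the determinant of a rotation via the block-diagonal form is exactly the content the paper delegates to \Cref{flgso3}, so there is no substantive difference.
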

\begin{proof}
By \Cref{lem:equivariant}, we may assume $G\subseteq O_n(\mathbb{R})$.
Now, since $G$ is strictly generated in codimension two, it follows from \Cref{flgso3} that $G\subseteq SO_n(\mathbb{R})$.
Suppose that there exists a hyperplane  $V^H$ in $\mathcal{A}^G$, where $H$ is a stabilizer subgroup of $G$.
Any non-trivial element $r\in H$ is a reflection, hence satisfies $\det(r)=-1$.
This contradicts the assumption that $H\subset SO_3(\mathbb R)$.
\end{proof}

Alternating subgroups of Coxeter groups are strictly generated in codimension two.

\begin{pro}\label{pro-alternating-subgroup}
For $W$ a Coxeter group, we have $\mathcal R_2W=\alt(W)$.
\end{pro}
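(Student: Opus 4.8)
The plan is to prove the two inclusions $\mathcal R_2 W\subseteq\alt(W)$ and $\alt(W)\subseteq\mathcal R_2 W$ separately. By \Cref{lem:equivariant} I may assume $W\subseteq O(V)$, so that every element of $W$ has determinant $\pm 1$ and $\alt(W)=\ker(\det)$ consists precisely of the elements expressible as products of an even number of reflections.

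For the inclusion $\mathcal R_2 W\subseteq\alt(W)$ it suffices to check that each generating rotation $g$ lies in $\alt(W)$. Writing $V=V^{\langle g\rangle}\oplus\bigl(V^{\langle g\rangle}\bigr)^\perp$ with the orthogonal complement two-dimensional, the restriction of $g$ to this plane is an element of $O(2)$ with no nonzero fixed vector, hence a genuine planar rotation of determinant $+1$; since $g$ is the identity on $V^{\langle g\rangle}$, we conclude $\det(g)=+1$ and $g\in\alt(W)$.

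The reverse inclusion is the heart of the argument, and the key observation is that the product of two reflections is a rotation or the identity. If $r_1,r_2$ are reflections with hyperplanes $H_1,H_2$, then $r_1r_2$ fixes $H_1\cap H_2$ pointwise; when $H_1\ne H_2$ this intersection has codimension exactly two, and on the orthogonal plane $r_1r_2$ acts as the composite of two distinct line-reflections, i.e.\ a nontrivial planar rotation, so $\operatorname{codim}V^{\langle r_1r_2\rangle}=2$ and $r_1r_2$ is a rotation (of finite order, since $W$ is finite); when $H_1=H_2$ the product is the identity. Now take any $w\in\alt(W)$. Since $W$ is generated by reflections, $w=r_1\cdots r_m$ for some reflections $r_i$, and $\det(w)=(-1)^m=1$ forces $m$ to be even. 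Pairing the factors as $w=(r_1r_2)(r_3r_4)\cdots(r_{m-1}r_m)$ then exhibits $w$ as a product of rotations and identities, whence $w\in\mathcal R_2 W$.

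I expect the only delicate point to be the codimension count in the reverse inclusion: one must verify that two distinct hyperplanes always meet in codimension two and that the induced map on the orthogonal plane is a genuine rotation rather than a reflection. This is exactly where the orthogonality supplied by \Cref{lem:equivariant} and the absence of a fixed line in the plane are used; everything else is bookkeeping with the determinant.
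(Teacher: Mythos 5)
Your proof is correct and follows essentially the same route as the paper's: rotations have determinant $+1$, so $\mathcal R_2W\subseteq\alt(W)$, and conversely every element of $\alt(W)$ is a product of an even number of reflections, whose consecutive pairs are rotations (or the identity), giving $\alt(W)\subseteq\mathcal R_2W$. The only difference is that you fill in details the paper leaves implicit, namely the orthogonal decomposition showing a rotation has determinant $+1$ and the verification that two distinct reflections compose to a genuine rotation whose fixed space has codimension exactly two.
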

\begin{proof}
Recall that the alternating subgroup $\alt(W)$ is the kernel of the determinant map $\det:W\rightarrow\mathbb R$.
In particular, $\mathcal R_2W\subset SO_n(\mathbb R)$ implies that $\mathcal R_2W\subset\alt(W)$. 

\noindent
Conversely, any element of $\alt(W)$ is the product of an even number of reflections.
Since the product of two reflections is a rotation, it follows that $\alt(W)$ is generated by rotations, hence $\alt(W)\subset\mathcal R_2W$.
\end{proof}

As we highlight in the introduction, we wonder in \Cref{isAlt?} if the class of alternating subgroups of Coxeter groups coincides with the class of groups generated in codimension two.
Right after \Cref{pro:subgroup-O3} we have verified that this is the case in $GL_3(\mathbb{R})$.


Recall that the lattice of intersection $\mathcal L(\mathcal{A}^G)$ of $\mathcal{A}^G$ is the poset $\mathcal{A}^G$ together with a minimal element $\hat{0}$ corresponding to $V^e$, see \cref{eq:latticeOfIntersection}.
A lattice $L$ is atomic if every $x$ in $L$ is the join of some atoms. It is a classical result that the lattice of intersections of a reflection arrangement is geometric, hence also atomic.


\begin{Theo}\label{thm-atomic-lattice}
Let $G$ be an essential group generated in codimension two. 
Then the lattice of intersection $\mathcal{L}(\mathcal{A}^G)$ is atomic.
\end{Theo}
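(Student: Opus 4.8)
The plan is to translate atomicity into subspace language and prove it by induction on codimension. Since $\mathcal{L}(\mathcal{A}^G)$ is ordered by reverse inclusion, its join is intersection and, as recorded above, its atoms are exactly the maximal subspaces of $\mathcal{A}^G$. Call a subspace $W\in\mathcal{L}(\mathcal{A}^G)$ \emph{good} if it equals the intersection of the atoms lying above it; atomicity is the assertion that every element is good. If $W$ is maximal it is an atom and is good trivially, so assume $W$ is not maximal and set
\[
  W^{+}\;:=\;\bigcap\bigl\{A\in\mathcal{A}^G \mid A\supsetneq W\bigr\}\;\supseteq\;W .
\]
Every $A$ appearing here satisfies $\operatorname{codim}A<\operatorname{codim}W$, so by the inductive hypothesis each such $A$ is good, and every atom cutting out $A$ contains $W$. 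Hence, as soon as I know $W^{+}=W$, I obtain $W=\bigcap_{A\supsetneq W}A$ as an intersection of good subspaces, which is again good. Conversely, if $W^{+}\supsetneq W$ then every atom above $W$ already contains $W^{+}$, so $\bigcap(\text{atoms above }W)\supseteq W^{+}\supsetneq W$ and $W$ fails to be good. Thus the theorem is \emph{equivalent} to the statement that no non-maximal arrangement subspace admits such a covering gap, i.e. $\bigcap\{A\in\mathcal A^G\mid A\supsetneq W\}=W$ for every non-maximal $W$.

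Next I would set up the geometry. By \Cref{lem:equivariant} I may assume $G\subseteq O(V)$, and for an arrangement subspace $W$ I write $H:=\operatorname{Fix}_\rho(W)$, a nontrivial finite group acting faithfully on $W^{\perp}$ with $(W^{\perp})^{H}=0$. The base of the induction and the role of the codimension-two generators enter through the two low cases: a codimension-one subspace is automatically maximal, hence an atom and good; and a codimension-two subspace $W$ has $H$ embedded in $O(W^{\perp})=O(2)$ with no nonzero fixed vector, so $H$ is cyclic or dihedral. In the dihedral case $H$ contains at least two reflections of $O(2)$, each extending to a genuine reflection of $G$ whose hyperplane contains $W$; two distinct such hyperplanes are atoms meeting exactly in $W$, so $W$ is good. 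In the cyclic case no reflection of $G$ fixes $W$, so no hyperplane contains $W$ and $W$ is itself maximal, hence good. The same dichotomy shows more generally that every arrangement subspace of codimension at most two is good.

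The crux---and the main obstacle---is the inductive step, namely ruling out a covering gap $W^{+}\supsetneq W$ when $\operatorname{codim}W\geq 3$. A first analysis shows that in such a gap every $h\in H$ that moves a point of $W^{+}$ must fix exactly $W$ (otherwise $V^{\langle h\rangle}$ would be an arrangement subspace strictly between $W$ and $V$, forcing it to contain $W^{+}$); in particular no reflection or rotation of $G$ can fix $W$ pointwise while moving $W^{+}$. My plan is to contradict this using that $G$ is generated in codimension two: choosing $h\in H$ with $V^{\langle h\rangle}=W$ and expanding it as a product of reflections and rotations of $G$ (whose orthogonal normal forms are controlled by \Cref{lemma-rotation-and-reflections}), I would track how these low-codimension generators sit relative to $W$ and show that they must \emph{refine} the gap---producing a reflection or rotation of $G$ whose fixed space lies strictly between $W$ and $V$ and meets $W^{+}$ nontrivially, contradicting the absence of any arrangement subspace strictly between $W$ and $W^{+}$. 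This step is a real-orthogonal analogue of Steinberg's fixed-point theorem adapted to codimension-two generators, and it is exactly where both the essentiality of $G$ and the codimension-two generation hypothesis are used in an essential way; making this refinement argument precise is the hardest part of the proof.
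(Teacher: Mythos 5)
Your reduction of atomicity to the ``no covering gap'' statement (that $W^{+}:=\bigcap\{A\in\mathcal A^G\mid A\supsetneq W\}$ equals $W$ for every non-maximal $W$) is correct, and your treatment of codimensions one and two is sound: hyperplanes of $\mathcal A^G$ are automatically atoms, and for a codimension-two $W$ the fixator $\operatorname{Fix}_{\rho}(W)$ acts on $W^{\perp}$ as a cyclic or dihedral subgroup of $O(2)$ with no nonzero fixed vector, which gives either maximality of $W$ or two distinct reflection hyperplanes intersecting exactly in $W$. The problem is that all of this is the easy part: none of it uses the hypothesis that $G$ is generated in codimension two, and the entire content of the theorem sits in the case $\operatorname{codim}W\geq 3$, which you explicitly leave as a ``plan.'' That plan, as written, does not close. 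You propose to pick $h\in\operatorname{Fix}_{\rho}(W)$ with $V^{\langle h\rangle}=W$ and expand it as a product of reflections and rotations, but the two factorizations available to you pull in opposite directions: the factors supplied by \Cref{lemma-rotation-and-reflections} are adapted to $W$ (they commute and fix $V^{\langle h\rangle}$ pointwise) but are elements of $O(V)$, not of $G$, so their fixed spaces need not be arrangement subspaces at all; whereas a word in the codimension-$\leq 2$ generators of $G$ does consist of elements of $G$, but those generators bear no relation to $W$, and nothing forces any of them to fix $W$ pointwise --- which is what you would need, since the contradiction requires a reflection or rotation of $G$ fixing $W$ pointwise but not fixing $W^{+}$ pointwise. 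Bridging these two factorizations is precisely the Steinberg-type statement you allude to, namely that for a non-maximal $W$ the subgroup of $\operatorname{Fix}_{\rho}(W)$ generated by the elements whose fixed space strictly contains $W$ still has fixed space equal to $W$; you give no argument for it, so the proposal is not a proof.

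For comparison, the paper does not run an induction on codimension at all. Given $x\in\mathcal L(\mathcal A^G)$, it separates the atoms of the interval $[\hat 0,x]$ into reflection hyperplanes and codimension-two subspaces; the hyperplanes through $x$ generate, via joins, a geometric --- hence atomic --- sublattice $\mathcal B$ (this is where reflection-arrangement theory enters, replacing your cyclic/dihedral analysis), and, writing $y$ for the maximal element of $\mathcal B$, i.e.\ the intersection of all hyperplanes through $x$, the paper concludes by expressing $x$ as the join of $y$ with the codimension-two atoms below $x$. In other words, the decisive step that defeats you is handled in the paper by this direct join decomposition on the interval $[\hat 0,x]$, not by manufacturing new low-codimension elements of $G$ that fix $W$. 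If you want to keep your inductive framework, the statement you must actually prove is the reformulation above about $\operatorname{Fix}_{\rho}(W)$; until that is established, your argument proves the theorem only for arrangement subspaces of codimension at most two.
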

\begin{proof}
  Let $x$ be in $\mathcal{L}(\mathcal{A}^{\rho})$ and consider the closed interval $I=[\hat{0}, x]$. If the atoms of $I$ are only hyperplanes, $h_1,\dots, h_k$, than there is nothing to prove, because as we said above the lattice of intersection of a reflection hyperplanes is a geometric lattice and $x$ is the joins of $h_1,\dots, h_k$.
  
  Assume now that some of the atoms of $I$ are subspaces $S_1, \dots, S_h$ of codimension two. Consider the sub-lattice $\mathcal{B}$ of $I$ defined by all joins of reflection hyperplanes in $I$.
  
  Since $x\notin \mathcal{B}$, we call $y$ the maximal element in $\mathcal{B}$ and $y$ properly contains $x$. 
  In other words, $y$ corresponds geometrically to the intersection of all hyperplanes in $I$ and, since, $x$ is not $\mathcal{B}$ then $y$ is properly contained in $x$
  Then, $x$ is the join of such $y$ with the remaining atoms $S_1, \dots, S_h$ in $I$.
\end{proof}

There is no hope that these arrangements are geometric lattices; a quick glance at \Cref{sec:classification} reveals several pathological examples, see for example \Cref{tab:fig:arr-D_oddxI}.


\section{Groups of Rank Three}\label{sec:rank-three}
In this section and in \Cref{sec:classification}, we study the finite subgroups of $GL_3(\mathbb R)$.
In particular, we identify the subgroups $G\subset GL_3(\mathbb R)$ for which the abstract group $|G|$ is minimally generated in codimension two.

We remark that by \Cref{lem:equivariant}, we may assume $G\subset O_3(\mathbb R)$; moreover, if $G$ is strictly generated in codimension two, then $G\subset SO_3(\mathbb R)$.

\begin{lem}\label{flgso3}
A subgroup $G\subset O_3(\mathbb R)$ is strictly generated in codimension two if and only if $G\subset SO_3(\mathbb{R})$.
\end{lem}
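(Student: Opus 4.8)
The plan is to prove both implications by analyzing the two pieces of data attached to an element $g\in O_3(\mathbb R)$: its determinant $\det g\in\{\pm 1\}$, and the dimension of its fixed space $V^{\langle g\rangle}$. Throughout I would use that being strictly generated in codimension two means $G=\left\langle g\mid \operatorname{codim}V^{\langle g\rangle}=2\right\rangle$, and that in $\mathbb R^3$ a fixed space of codimension two is exactly a line.

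For the direction ($\Rightarrow$), suppose $G$ is strictly generated in codimension two; since $SO_3(\mathbb R)$ is a subgroup, it suffices to show that each generator $g$ with $\operatorname{codim}V^{\langle g\rangle}=2$ lies in $SO_3(\mathbb R)$. Such a $g$ fixes a line $\ell=V^{\langle g\rangle}$ pointwise and, being orthogonal, preserves the orthogonal complement $\ell^\perp$, a plane. The restriction $g|_{\ell^\perp}\in O(\ell^\perp)\cong O_2(\mathbb R)$ has no nonzero fixed vector, since otherwise $V^{\langle g\rangle}$ would be strictly larger than $\ell$. An element of $O_2(\mathbb R)$ of determinant $-1$ is a reflection and hence fixes a line; therefore $g|_{\ell^\perp}$ must have determinant $+1$. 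As $g$ acts as the identity on $\ell$, we obtain $\det g=1\cdot\det(g|_{\ell^\perp})=1$, so $g\in SO_3(\mathbb R)$ and thus $G\subset SO_3(\mathbb R)$.

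For ($\Leftarrow$), suppose $G\subset SO_3(\mathbb R)$. I would show that every nontrivial element of $G$ already has codimension-two fixed space, so that $G$ is automatically generated by such elements. Let $g\in G$ be nontrivial. Its three complex eigenvalues are unimodular, closed under conjugation, and multiply to $\det g=1$; hence they are $1,e^{i\theta},e^{-i\theta}$ for some angle $\theta$. This guarantees an eigenvalue $1$ (Euler's rotation theorem), so $\dim V^{\langle g\rangle}\geq 1$. Were the fixed space a plane, $g$ would restrict to $-1$ on the remaining orthogonal line, forcing $\det g=-1$, a contradiction; hence $\dim V^{\langle g\rangle}=1$ exactly. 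Thus every nontrivial $g\in G$ satisfies $\operatorname{codim}V^{\langle g\rangle}=2$, and therefore $G=\left\langle g\mid \operatorname{codim}V^{\langle g\rangle}=2\right\rangle$ is strictly generated in codimension two (the trivial group being vacuously so).

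The computations here are elementary, so the only real subtlety—and the step I would treat most carefully—is pinning down that the fixed space of a nonidentity element of $SO_3(\mathbb R)$ has codimension \emph{exactly} two, rather than one or three. This is precisely what the eigenvalue bookkeeping secures: the determinant constraint rules out a fixed plane (which would force determinant $-1$), while the conjugate-pair structure of the spectrum rules out having no fixed vector at all. Everything else follows formally from $SO_3(\mathbb R)$ being a subgroup together with the defining generation property.
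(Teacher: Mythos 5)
Your proof is correct and takes essentially the same route as the paper: both arguments reduce the equivalence to the two facts that a codimension-two element of $O_3(\mathbb R)$ has determinant $1$, and that every non-identity element of $SO_3(\mathbb R)$ fixes exactly a line. The only difference is that you supply proofs of these facts (restriction to $\ell^\perp$ for the first, the eigenvalue structure $1,e^{i\theta},e^{-i\theta}$ for the second), whereas the paper asserts them outright.
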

\begin{proof}
Any rotation $g\in O_3(\mathbb R)$ satisfies $\det g=1$, i.e., $g\in SO_3(\mathbb R)$.
It follows that $G\subset SO_3(\mathbb R)$.
Conversely, since every non-identity element of $SO_3(\mathbb R)$ is a reflection, any subgroup $G\subset SO_3(\mathbb R)$ is strictly generated in codimension two.
\end{proof}

In this section, we prove some theoretical results.
In \Cref{sec:classification}, we compute the subspace arrangements for all $G\subset O_3(\mathbb R)$. 
We occasionally refer to some examples presented in the final section. 

\begin{lem}\label{lem:bou-ref}
Following \cite{MR1890629}, the following are the essential rank three linear reflection groups:\begin{enumerate}
    \item The product group $A_1\times I_2(n)$. 
    \item The group of symmetries of a regular tetrahedron, denoted $A_3$.
    \item The group of symmetries of a cube, denoted $BC_3$.
    \item The group of symmetries of a regular icosahedron, denoted $H_3$.
\end{enumerate}
\end{lem}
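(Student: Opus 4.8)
The plan is to derive the list directly from Coxeter's classification of finite real reflection groups, specialized to rank three and to the essential case. First I would recall the structure theorem: any finite reflection group $W$ acting on $V$ splits as an orthogonal product $W = W_1\times\cdots\times W_m$ of irreducible reflection groups acting on mutually orthogonal subspaces $V_1,\dots,V_m$, together with the trivial action on the orthogonal complement of $V_1\oplus\cdots\oplus V_m$. By definition $W$ is essential exactly when this complement is zero, so that $\dim V = \sum_i rk(W_i)$. Here the task is therefore to enumerate all ways of writing $3$ as a sum of ranks of irreducible reflection groups.

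Next I would list the irreducible factors of small rank using \cite{MR1890629}. In rank one the only irreducible reflection group is $A_1$. In rank two the irreducibles are the dihedral groups $I_2(n)$ with $n\geq 3$; the case $n=2$ is reducible since $I_2(2)=A_1\times A_1$, consistent with the conventions fixed in \Cref{sec:ReflectionSmallRank}. In rank three the connected Coxeter diagrams on three nodes are exactly three: the path with bond labels $(3,3)$ giving $A_3$, the path with one label $4$ giving $B_3$, and the path with one label $5$ giving $H_3$. I would note explicitly that no further connected three-node diagram occurs and that $D_3\cong A_3$, so this enumeration is complete.

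Then I would assemble the essential rank-three groups by distributing total rank $3$. If $W$ is irreducible it is one of $A_3$, $B_3=:BC_3$, or $H_3$, realized geometrically as the symmetry groups of the regular tetrahedron, the cube, and the regular icosahedron respectively; these give cases (2), (3), (4). If $W$ is reducible, the only partition of $3$ into irreducible ranks $\geq 1$ that is not purely irreducible forces a rank-one factor $A_1$ together with an essential rank-two factor $I_2(n)$, $n\geq 2$, giving case (1); the triple product $A_1\times A_1\times A_1$ is subsumed here as the instance $n=2$, since $A_1\times A_1=I_2(2)$. This exhausts all possibilities.

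The only genuinely delicate points are confirming that the enumeration of connected three-node diagrams is exhaustive and that the geometric identifications $A_3\leftrightarrow$ tetrahedron, $B_3\leftrightarrow$ cube, $H_3\leftrightarrow$ icosahedron are the standard ones; both are classical consequences of the cited classification, so the argument reduces to matching conventions rather than proving anything new. I expect the main obstacle to be purely bookkeeping: ensuring the reducible case is handled without double counting (in particular reconciling $A_1\times A_1\times A_1$ with the $n=2$ member of family (1)) and that the essentiality hypothesis is used correctly to rule out any trivial summand.
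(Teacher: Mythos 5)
Your proposal is correct and takes essentially the same approach as the paper: the paper states this lemma purely as a recollection of the classification in \cite{MR1890629} with no separate proof, and your argument (orthogonal decomposition into irreducible factors, enumeration of the rank partitions $3$, $2+1$, $1+1+1$, and the standard geometric identifications of $A_3$, $BC_3$, $H_3$) is exactly the standard unpacking of that citation. Your bookkeeping also matches the paper's conventions in \Cref{sec:ReflectionSmallRank}, where $I_2(2)=A_1\times A_1$, so $A_1\times A_1\times A_1$ is correctly subsumed as the $n=2$ instance of family (1).
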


\begin{pro}[cf. \cite{Klein-icosahedron}]\label{pro:Klein-class}
A (finite) subgroup $G\subset O_3(\mathbb R)$, strictly generated in codimension $2$ is, up to conjugation, one of the following:\begin{enumerate} 
    \item The Cyclic group $\mu_2(n)\times\triv$.
    \item The Dihedral group $D_3(n)$; it is the group of automorphisms in $SO_3(\mathbb R)$ of a regular $n$-gon.
    \item The Tetrahedral group $T_3$; it is the group of automorphisms in $SO_3(\mathbb R)$ of the regular tetrahedron.
    \item The Octahedral group $Oct_3$; it is the group of automorphisms in $SO_3(\mathbb R)$ of the cube.
    \item The Icosahedral group $Ico_3$; it is the group of automorphisms in $SO_3(\mathbb R)$ of the regular icosahedron.
\end{enumerate}
For convenience, we write $\mu_3(n)\define\mu_2(n)\times\triv$.
\end{pro}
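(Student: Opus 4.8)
The plan is to reduce the statement to the classical classification of finite subgroups of the rotation group and then carry out the standard pole-counting argument. By \Cref{flgso3}, a finite subgroup $G\subset O_3(\mathbb{R})$ is strictly generated in codimension two precisely when $G\subset SO_3(\mathbb{R})$; hence it suffices to classify, up to conjugacy, the finite subgroups of $SO_3(\mathbb{R})$ and match them with the five families in the statement.

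First I would set up the incidence count. Assume $N=|G|>1$. Every non-identity $g\in G$ is a rotation fixing exactly two antipodal points (its \emph{poles}) on the unit sphere $S^2$; let $P\subset S^2$ be the set of all poles of all non-identity elements. Then $G$ acts on $P$, and for $p\in P$ the stabilizer $G_p$ consists of rotations about a common axis, so it is cyclic of some order $n_p\geq 2$. Counting the pairs $(g,p)$ with $g\neq e$ and $gp=p$ in two ways yields
\[
2(N-1)=\sum_{p\in P}(n_p-1).
\]
Grouping $P$ into $G$-orbits $O_1,\dots,O_k$ with common stabilizer order $n_i$ on $O_i$ and $|O_i|=N/n_i$ by orbit--stabilizer, this becomes
\[
\sum_{i=1}^{k}\Bigl(1-\tfrac1{n_i}\Bigr)=2-\tfrac2N.
\]

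Next I would solve this equation over the integers. Since each summand lies in $[\tfrac12,1)$ while the right-hand side lies in $[1,2)$, the number of orbits $k$ satisfies $2\leq k\leq 3$. If $k=2$, then $\tfrac1{n_1}+\tfrac1{n_2}=\tfrac2N$ forces $n_1=n_2=N$, so both orbits are single fixed poles and $G$ is cyclic, i.e.\ $\mu_2(N)\times\triv$. If $k=3$, then $\tfrac1{n_1}+\tfrac1{n_2}+\tfrac1{n_3}=1+\tfrac2N$ with $2\leq n_1\leq n_2\leq n_3$, whose only solutions are $(2,2,m)$ with $N=2m$, and $(2,3,3)$, $(2,3,4)$, $(2,3,5)$ with $N=12,24,60$ respectively.

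Finally I would identify each numerical solution geometrically: $(2,2,m)$ gives the dihedral group $D_3(m)$ of rotations of a regular $m$-gon, while $(2,3,3)$, $(2,3,4)$, $(2,3,5)$ give the rotation groups $T_3$, $Oct_3$, $Ico_3$ of the tetrahedron, cube, and icosahedron. I expect the main obstacle to be exactly this last identification: the counting argument determines only the orders $n_i$ and the orbit sizes $N/n_i$, and to conclude that each admissible tuple is realized by a \emph{unique} conjugacy class of subgroups I would need to reconstruct the corresponding polytope from its orbits of poles (vertices, edge-midpoints, face-centers) and verify that the resulting configuration of axes is rigid up to an element of $SO_3(\mathbb{R})$.
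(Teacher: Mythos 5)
Your proposal is correct, and its first step coincides with the paper's: both reduce the proposition to the classification of finite subgroups of $SO_3(\mathbb{R})$ via \Cref{flgso3}. The difference is what happens next. The paper gives no further argument at all --- the proposition is labelled ``cf.'' precisely because the classification of finite rotation groups is taken as classical, with \cite{Klein-icosahedron} as the reference --- whereas you re-derive that classical fact by the standard pole-counting argument. Your computation is sound: the incidence count $2(N-1)=\sum_{p}(n_p-1)$, its orbit form $\sum_{i}\bigl(1-\tfrac{1}{n_i}\bigr)=2-\tfrac{2}{N}$, the bound $2\le k\le 3$, and the list of solutions $(N,N)$, $(2,2,m)$, $(2,3,3)$, $(2,3,4)$, $(2,3,5)$ are exactly the classical ones, and the cyclic and dihedral cases are correctly matched to $\mu_2(n)\times\triv$ and $D_3(n)$. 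The one piece you leave as a sketch --- realizing each of $(2,3,3)$, $(2,3,4)$, $(2,3,5)$ by a \emph{unique} conjugacy class, e.g.\ showing that in the $(2,3,3)$ case the orbit of size $N/n_3=4$ is the vertex set of a regular tetrahedron, so that $G$ is conjugate to $T_3$ --- is genuinely required for the ``up to conjugation'' claim, but it is the standard conclusion of the pole argument and hides no difficulty. In short, your route buys self-containedness: it proves the statement the paper merely cites, at the cost of a routine geometric identification step; for completeness you could also dispose of the trivial subgroup as the case $n=1$ of family (1).
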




Observe that for each group $G$ in the previous proposition, there exists a reflection group $W$ such that $G=\mathcal R_2W$.
Indeed, we can verify \begin{align*}
    \mu_3(n)=\mathcal R_2(I_2(n)\times\triv),\qquad D_3(&n)=\mathcal R_2(I_2(n)\times A_1),\\
    T_3=\mathcal R_2A_3,\qquad Oct_3=\mathcal R_2BC_3&,\qquad Ico_3=\mathcal R_2H_3. 
\end{align*}
This settles \Cref{isAlt?} for rank three groups.

We can use the previous proposition along with \Cref{Pro-gen-codim-2-in-high-dimension} to construct linear groups of higher rank which strictly generated in codimension two.
Indeed, if $G_1,\dots, G_n$ are any groups in \Cref{pro:Klein-class}, the product group $G_1\times\dots\times G_n$ is a linear group of rank $3n$, strictly generated in codimension two.

Moreover, by tensoring the $G$-representation $\rho$, we obtain higher rank linear groups strictly generated in higher codimension.

\begin{pro}\label{Pro-group-gen-in-codim-2n}
Let $G=(|G|,\rho)$ be a rank three linear group strictly generated in codimension two, i.e., one of the linear groups in \Cref{pro:Klein-class}.
The $n^{th}$-tensor $\operatorname{diag}_n G$, given by the faithful representation $$|G|\xhookrightarrow{\rho \times \dots \times \rho} GL(V^n)$$
is a rank $3n$ linear group strictly generated in codimension $2n$.
\end{pro}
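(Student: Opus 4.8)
The plan is to reduce the statement to a one-line computation of fixed spaces under the block-diagonal action, exploiting the fact, recorded in \Cref{pro:Klein-class} and \Cref{flgso3}, that each of the candidate rank-three groups lies inside $SO_3(\mathbb{R})$. Since every non-identity element of $SO_3(\mathbb{R})$ is a rotation about some axis, the first observation I would make is that for each group $G$ in \Cref{pro:Klein-class} and each $g\in G\setminus\{e\}$ the fixed space $V^{\langle g\rangle}$ is exactly a line, so $\dim V^{\langle g\rangle}=1$ and $\operatorname{codim}_V V^{\langle g\rangle}=2$. In particular every non-identity element of $G$ is a codimension-two element, so $G$ is (trivially) generated by them.

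Next I would identify the fixed space of a diagonal element. Writing $\operatorname{diag}_n(g)$ for the block-diagonal matrix with $n$ copies of $\rho(g)$ on the diagonal, acting on $V^n=V\oplus\dots\oplus V$, a vector $(v_1,\dots,v_n)$ is fixed by $\operatorname{diag}_n(g)$ if and only if $\rho(g)v_i=v_i$ for every $i$, that is, if and only if each $v_i$ lies in $V^{\langle g\rangle}$. Hence
\[
    (V^n)^{\langle\operatorname{diag}_n(g)\rangle}=\bigl(V^{\langle g\rangle}\bigr)^n,
\]
so that $\dim(V^n)^{\langle\operatorname{diag}_n(g)\rangle}=n\,\dim V^{\langle g\rangle}$ and consequently $\operatorname{codim}_{V^n}(V^n)^{\langle\operatorname{diag}_n(g)\rangle}=n\,\operatorname{codim}_V V^{\langle g\rangle}$.

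Combining the two steps completes the argument. For every non-identity $g\in G$ we have $\operatorname{codim}_V V^{\langle g\rangle}=2$ by the first step, so its image $\operatorname{diag}_n(g)$ has fixed space of codimension exactly $2n$ in $V^n$ by the second. Because the representation $\rho\times\dots\times\rho$ is faithful, $\operatorname{diag}_n G$ is isomorphic as an abstract group to $|G|$ and is generated by the images of any generating set of $G$; since all such images—indeed all non-identity elements of $\operatorname{diag}_n G$—have codimension $2n$, the group $\operatorname{diag}_n G\subset GL(V^n)=GL_{3n}(\mathbb{R})$ is strictly generated in codimension $2n$.

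I do not anticipate a genuine obstacle in this proof; the one point requiring care is the reading of $V^n$ as the $n$-fold \emph{direct sum} carrying the diagonal $G$-action—so that the rank is $3n$ rather than $3^n$—and the verification of the displayed fixed-space identity, which is immediate once the action is pinned down. The substance of the statement is carried entirely by the containment $G\subset SO_3(\mathbb{R})$, which forces every non-identity element to have the same fixed-space codimension and thereby makes the codimension scale linearly under $\operatorname{diag}_n$.
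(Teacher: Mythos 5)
Your proof is correct and rests on exactly the same key observation as the paper's one-line proof, namely the fixed-space identity $(V^n)^{\langle(g,\dots,g)\rangle}=\bigl(V^{\langle g\rangle}\bigr)^n$ and the resulting scaling $\operatorname{codim}_{V^n}(V^n)^{\langle(g,\dots,g)\rangle}=n\,\operatorname{codim}_V V^{\langle g\rangle}$. Your preliminary step that \emph{every} non-identity element of $G\subset SO_3(\mathbb{R})$ has codimension exactly two is true but slightly more than needed: strict generation of $G$ in codimension two already hands you generators of codimension exactly two, and their diagonal images then generate $\operatorname{diag}_n G$ in codimension exactly $2n$.
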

\begin{proof}
It is sufficient to observe that $
    \operatorname{codim}(V^n)^{\langle (g,\dots,g)\rangle}=n\operatorname{codim} V^{\langle g\rangle}
.$
\end{proof}

Note that the abstract group underlying $\operatorname{diag}_n G$ is $|G|$, the same as the abstract group underlying $G$; but $\operatorname{diag}_n G$ is, of course, different from $G$ as a linear group.

We can now identify the abstract groups underlying the linear groups in \Cref{pro:Klein-class} which are minimally generated in codimension two.
\begin{TheoM}\label{THM-minimal-rank-3}
Let $G$ be a rank three linear group.
Then $|G|$ is minimally generated in codimension two precisely when $G$ is one of the following:
the Cyclic group $\mu_3(n)$ for $n\geq 3$, the Tetrahedral group $T_3$, or the Icosahedral group $Ico_3$.
\end{TheoM}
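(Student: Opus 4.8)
The plan is to go through the five linear groups listed in \Cref{pro:Klein-class} one at a time and determine, for each, whether the underlying abstract group admits a faithful \emph{reflection} representation in some dimension (equivalently, whether it can be generated in codimension one), since by definition $|G|$ is minimally generated in codimension two exactly when it is strictly generated in codimension two for some faithful $\rho$ (which all five already are, by \Cref{pro:Klein-class}) but admits no faithful representation strictly generated in codimension one. First I would record that each of $\mu_3(n)$, $D_3(n)$, $T_3$, $Oct_3$, $Ico_3$ is strictly generated in codimension two in its given rank-three representation, so the only question is whether the codimension can be pushed down to one in \emph{some} faithful representation. The separation into the two camps is then a matter of matching each abstract group against the classification of finite real reflection (Coxeter) groups.

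For the three groups claimed to be minimally generated in codimension two, the strategy is to show they have no faithful reflection representation at all. For $\mu_3(n)$ with $n\geq 3$ we have $|\mu_3(n)|\cong\nicefrac{\mathbb Z}{n\mathbb Z}$, and \Cref{Cor-mu-2-n>2isNeatInCod2} already settles this case. For $T_3$ and $Ico_3$, whose abstract groups are $A_4$ and $A_5$ respectively, I would invoke the classification of finite Coxeter groups: a finite Coxeter group is a direct product of irreducible ones drawn from the $A$-$B$-$D$-$E$-$F$-$G$-$H$-$I_2$ families, and I would argue that neither $A_4$ (order $12$) nor $A_5$ (order $60$) appears as such a product. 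This can be made clean by comparing orders and group-theoretic invariants; the excerpt already uses exactly this style of argument for $T_3$ just before \Cref{sec:ReflectionSmallRank} (noting $|I_2(6)|\not\cong|T_3|$ via a normal subgroup of order $6$). For $A_5$, simplicity is the decisive obstruction: every finite Coxeter group is solvable-by-nothing in the relevant sense—more concretely, a nonabelian finite simple Coxeter group would have to be an irreducible Coxeter group that is simple, and inspection of the list shows none of order $60$ occurs, so $A_5$ has no faithful reflection representation.

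For the two groups \emph{not} on the list, $D_3(n)$ and $Oct_3$, the plan is simply to exhibit faithful reflection representations, thereby showing their minimal codimension is one rather than two. Here I would use the data already assembled in \Cref{allGroupData} and the relations displayed just after \Cref{pro:Klein-class}: $D_3(n)=\mathcal R_2(I_2(n)\times A_1)$ realizes $|D_3(n)|$ as the rotation subgroup of the reflection group $I_2(n)\times A_1$, but abstractly $|D_3(n)|$ is itself a dihedral group, hence is the Coxeter group $I_2(n)$ in its standard rank-two reflection representation; similarly $|Oct_3|\cong S_4$ is the Coxeter group $A_3$, which has a faithful reflection representation. In both cases the abstract group is already a reflection group, so it is generated in codimension one in an appropriate faithful representation and therefore is \emph{not} minimally generated in codimension two.

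\textbf{Main obstacle.} The delicate part is the nonexistence direction for $T_3$ and $Ico_3$: one must rule out \emph{all} faithful reflection representations, not merely the rank-three ones, which means genuinely appealing to the full classification of finite Coxeter groups rather than to a dimension-specific list. The simplicity of $A_5$ gives a slick obstruction in that case, but $A_4$ requires a short structural argument (comparing its order and its subgroup structure against the finite Coxeter groups of that order) to close the gap cleanly.
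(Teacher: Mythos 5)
Your proposal is correct and takes essentially the same route as the paper: the paper's entire proof is a one-line ``inspection of all groups strictly generated in codimension two in \Cref{pro:Klein-class} compared with the list of all reflection groups,'' and your case-by-case matching (the cyclic case via \Cref{Cor-mu-2-n>2isNeatInCod2}, ruling out $A_4$ and $A_5$ against the Coxeter classification, and exhibiting $|D_3(n)|\cong I_2(n)$ and $|Oct_3|\cong A_3$ as reflection groups) is exactly that inspection carried out in detail. Your write-up merely makes explicit the structural obstructions (simplicity of $A_5$, the subgroup structure of $A_4$) that the paper leaves implicit.
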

The proof of \Cref{THM-minimal-rank-3} is by inspection of all groups strictly generated in codimension two in \Cref{pro:Klein-class} compared with the list of all reflection groups, classified in \cite{MR1890629}.

The rest of this section is devoted to classifying the finite linear groups of rank three.

\subsection{Mixed Groups}\label{def:mixedgroup}
Let $\J:V\rightarrow V$ denote the \emph{antipodal map} $v\mapsto -v$.
For $G$ a linear group, we denote by $\J G$ the linear group generated by $G$ and \J.
Let $H,K$ be subgroups of $SO_3(\mathbb R)$ such that the index of $H$ in $K$ is $2$.
We define the mixed group \[
    (K,H)\define H\sqcup\left\{\J g\mmid g\in K\backslash H\right\}.
\]
Observe that there exists an isomorphism $|(K,H)|\cong|K|$, given by \[
    g\mapsto\begin{cases} 
                g   &\text{if }g\in H,\\ 
                \J g&\text{otherwise.}
            \end{cases}
\]

\begin{lem}
Let $G$ denote the mixed group $(K,H)$.
Then $\mathcal R_2G=H$.
\end{lem}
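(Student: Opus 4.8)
The plan is to identify explicitly which elements of the mixed group $G=(K,H)$ are rotations, and to show that these are exactly the non-identity elements of $H$. Since $\mathcal{R}_2 G$ is by definition the subgroup generated by the rotations (the codimension-two elements) of $G$, this immediately yields $\mathcal{R}_2 G=\langle H\setminus\{e\}\rangle = H$. Note that all elements of $G$ lie in $O_3(\mathbb R)$, so the determinant-based notion of rotation used in \Cref{flgso3} applies throughout.

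First I would establish the containment $H\subseteq \mathcal{R}_2 G$. By construction $H\subset SO_3(\mathbb R)$, and every non-identity element of $SO_3(\mathbb R)$ is a rotation about an axis, hence fixes a one-dimensional subspace, i.e.\ has codimension-two fixed space (this is exactly the fact used in \Cref{flgso3}). Thus each non-identity $h\in H$ is a rotation and lies in $\mathcal{R}_2 G$, and since $H$ is generated by its non-identity elements we obtain $H\subseteq \mathcal{R}_2 G$.

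For the reverse containment I would examine the remaining elements of $G$, namely those of the form $\J g=-g$ with $g\in K\setminus H$. The key observation is a determinant computation: since $g\in K\subset SO_3(\mathbb R)$ we have $\det g=1$, while $\det\J=(-1)^3=-1$, so $\det(\J g)=-1$ and therefore $\J g\in O_3(\mathbb R)\setminus SO_3(\mathbb R)$. By \Cref{flgso3}, any rotation of $\mathbb R^3$ has determinant $1$, so no element of determinant $-1$ can be a rotation. Hence none of the elements $\J g$ is a rotation, and the complete set of rotations in $G$ is precisely $H\setminus\{e\}$. Consequently $\mathcal{R}_2 G\subseteq \langle H\setminus\{e\}\rangle = H$, and combined with the first containment this gives $\mathcal{R}_2 G=H$.

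I do not expect a genuine obstacle here; the argument is essentially the determinant observation already exploited in \Cref{flgso3} and \Cref{pro:no-cod-one}. The only point requiring a moment's care is the degenerate case $H=\{e\}$, where $K$ has order two and $\J g$ is in fact a reflection rather than an improper rotation; but then both sides of the desired identity are trivial, so the conclusion still holds. If one preferred an argument avoiding the citation, the same conclusion follows by putting $g\in SO_3(\mathbb R)$ in its normal form as a rotation by an angle $\theta$ about an axis and checking directly that $\J g=-g$ has a two-dimensional fixed space when $\theta=\pi$ (a reflection) and a trivial fixed space otherwise, so its fixed space is never of codimension two.
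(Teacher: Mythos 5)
Your proof is correct, but your mechanism for the reverse inclusion differs from the paper's. Both arguments obtain $H\subseteq\mathcal R_2G$ in the same way (every non-identity element of $H\subset SO_3(\mathbb R)$ fixes exactly a line, by Euler's theorem, as in \Cref{flgso3}). For the containment $\mathcal R_2G\subseteq H$, however, the paper never inspects the coset $\left\{\J g \mid g\in K\setminus H\right\}$ at all: it observes that $\mathcal R_2G\subseteq SO_3(\mathbb R)$ (rotations have determinant one) while $G\not\subset SO_3(\mathbb R)$, so $\mathcal R_2G$ is a \emph{proper} subgroup of $G$ containing $H$; since $H$ has index two in $G$ it is a maximal proper subgroup, and the sandwich $H\subseteq\mathcal R_2G\subsetneq G$ forces equality. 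You instead classify the rotations of $G$ outright: every element of $G\setminus H$ has determinant $-1$, hence is never a rotation, so the set of rotations in $G$ is precisely $H\setminus\{e\}$ and the subgroup it generates is $H$. Your route is marginally more informative (it pins down the exact set of rotations, not merely the subgroup they generate) and needs no group-theoretic maximality; the paper's route is the same index-two trick it already used in \Cref{prop-GL2-real}, and has the small advantage of not requiring any statement about the fixed spaces of the determinant $-1$ elements. Your explicit treatment of the degenerate case $H=\{e\}$, where $\J g$ is a reflection, is careful and correct, and the paper's maximality argument also survives that case.
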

\begin{proof}
It is clear that $H\subset\mathcal R_2G$.
Further, since $G\not\subset SO_3(\mathbb R)$, we have $\mathcal R_2G\neq G$, i.e., $\mathcal R_2G$ is a proper subgroup of $G$.
Since $H$ is a maximal proper subgroup of $G$, it follows that $\mathcal R_2G=H$.
\end{proof}

\begin{pro}
Let $G$ be a finite subgroup of $O_3(\mathbb R)$. 
Exactly one of the following holds:\begin{enumerate}
    \item $G\subset SO_3(\mathbb R)$.
    \item $G=\J H$ for some $H\subset SO_3(\mathbb R)$.
    \item $G$ is a mixed group.
\end{enumerate}
\end{pro}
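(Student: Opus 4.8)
The plan is to analyze a finite subgroup $G\subset O_3(\mathbb R)$ through its image under the determinant map $\det:O_3(\mathbb R)\to\{\pm1\}$, and to show that the three listed cases correspond exactly to the three possibilities for how $G$ interacts with $\J=-\operatorname{Id}$. First I would set $H\define G\cap SO_3(\mathbb R)$, which is a normal subgroup of $G$ of index one or two, since it is the kernel of the restriction of $\det$ to $G$. If the index is one, then $G\subseteq SO_3(\mathbb R)$ and we are in case (1). So the real content is to analyze the index-two case, where $G=H\sqcup (G\setminus H)$ with every element of $G\setminus H$ having determinant $-1$.

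The key observation I would use is that in dimension three $\det\J=(-1)^3=-1$, so $\J\in O_3(\mathbb R)\setminus SO_3(\mathbb R)$. Fix any $g_0\in G\setminus H$; then $\J g_0\in SO_3(\mathbb R)$, and I would split into two subcases according to whether $\J\in G$ or $\J\notin G$. If $\J\in G$, I would argue that $G=\J H$: indeed $\J H$ already contains $H$ (determinant $+1$ part) and $\J H$ contributes the determinant $-1$ elements, and a cardinality count using $|\J H|=2|H|=|G|$ closes it, giving case (2). If $\J\notin G$, I would set $K\define H\sqcup\{\J g\mid g\in G\setminus H\}$; each such $\J g$ lies in $SO_3(\mathbb R)$, so $K\subset SO_3(\mathbb R)$, and I would verify $K$ is a subgroup with $H$ of index two inside it, whence by the definition in \Cref{def:mixedgroup} the group $G=(K,H)$ is a mixed group, giving case (3).

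The remaining obligation is \emph{exclusivity}: the three cases are mutually disjoint. Case (1) excludes cases (2) and (3) because both of the latter contain determinant $-1$ elements and hence are not contained in $SO_3(\mathbb R)$. To separate (2) from (3), I would note that a group of the form $\J H$ always contains $\J$ itself (taking the identity in $H$), whereas a mixed group $(K,H)$ with $K\neq G$ is built precisely so that $\J\notin(K,H)$ — its determinant $-1$ elements are the $\J g$ for $g\in K\setminus H$, none of which equals $\J$ since that would force $g=e\in H$, a contradiction. Thus the dichotomy $\J\in G$ versus $\J\notin G$ cleanly partitions the index-two case.

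The main obstacle I anticipate is purely bookkeeping rather than conceptual: checking that the set $K$ in the $\J\notin G$ subcase is genuinely closed under multiplication and inverses (so that it is a bona fide subgroup of $SO_3(\mathbb R)$), and that $[K:H]=2$. This reduces to the fact that multiplying two determinant $-1$ elements of $G$ lands in $H$, and that left-multiplication by $\J$ (which is central, being $-\operatorname{Id}$) sets up a bijection between $G\setminus H$ and $K\setminus H$; since $\J$ is central, the verification that $K$ is closed follows from closure of $G$ together with $\J^2=\operatorname{Id}$. Once this is in place the three cases are immediate and exhaustive.
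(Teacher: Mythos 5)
Your proof is correct and follows essentially the same route as the paper's: both split off the case $G\subset SO_3(\mathbb R)$ via the determinant, then branch on whether $\J\in G$, and in the negative case build the subgroup $K=H\sqcup\{\J g\mid g\in G\setminus H\}$ (the paper packages this as the image of the folding map $\phi(A)=A$ or $\J A$ according to $\det A$, which is the same construction). Your write-up is in fact slightly more complete, since you verify the mutual exclusivity of the three cases and the subgroup axioms for $K$, which the paper leaves as ``straightforward to verify.''
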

\begin{proof}
Let $G$ be a finite subgroup of $O_3(\mathbb R)$ not contained in $SO_3(\mathbb R)$.
Consider the map $\phi:G\rightarrow SO_3(\mathbb R)$ given by\[
    \phi(A)=\begin{cases}
                A   &\text{if }\det A=1,\\
                \J A&\text{otherwise.}
\end{cases}
\]
We set $H=\ker\phi$.
If $\J\in G$, then $G=\J H$.
Suppose now that $\J\not\in G$.
Then for any $g\in G\backslash H$, we have $\J g\not\in H$.
Consider the subgroup\[
    K=H\sqcup\left\{\J g\mmid g\in G\backslash H\right\}.
\]
It is straightforward to verify that $G=(K,H)$.
\end{proof}

\begin{pro}\label{pro:subgroup-O3}
The finite subgroups of $O_3(\mathbb R)$ are, up to conjugation, precisely the following:\begin{enumerate}
    \item The finite subgroups of $SO_3(\mathbb R)$.
    \item $\J H$ where $H$ is some linear group generated in codimension $2$.
    \item The mixed group $(Oct_3,T_3)=A_3$.
    \item The mixed group $(\mu_3(2n),\mu_3(n))$. 
    \item The mixed group $(D_3(n),\mu_3(n))=I_2(n)\times\triv$. 
    \item The mixed group $(D_3(2n),D_3(n))$.
\end{enumerate}
\end{pro}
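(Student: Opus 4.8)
The plan is to feed the trichotomy from the preceding proposition into the classification of finite subgroups of $SO_3(\mathbb R)$. That proposition tells us that a finite $G\subset O_3(\mathbb R)$ is either contained in $SO_3(\mathbb R)$, of the form $\J H$ with $H\subset SO_3(\mathbb R)$, or a mixed group $(K,H)$ with $H,K\subset SO_3(\mathbb R)$ and $[K:H]=2$. The first case is exactly item~(1): by \Cref{flgso3} the finite subgroups of $SO_3(\mathbb R)$ are precisely the groups strictly generated in codimension two, which \Cref{pro:Klein-class} lists as $\mu_3(n)$, $D_3(n)$, $T_3$, $Oct_3$, $Ico_3$. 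In the second case $H\subset SO_3(\mathbb R)$, so every non-identity element of $H$ is a rotation (its fixed space is the rotation axis, of codimension two); hence $H$ is generated in codimension two and $G=\J H$ is precisely item~(2). All the real work therefore sits in the mixed-group case.

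To finish that case I would enumerate, for each finite $K\subset SO_3(\mathbb R)$, its subgroups $H$ of index two; since an index-two subgroup is automatically normal, these can be read off from the abelianization of $K$. For $K\cong C_k$ an index-two subgroup requires $k$ even and is then the unique $\mu_3(k/2)$, giving the family $(\mu_3(2n),\mu_3(n))$ of item~(4). For $K\cong D_k$ the cyclic rotation subgroup $\mu_3(k)$ always has index two, producing $(D_3(n),\mu_3(n))$ of item~(5); when $k$ is even the abelianization is $C_2\times C_2$, contributing in addition two dihedral subgroups $D_3(k/2)$, which yields $(D_3(2n),D_3(n))$ of item~(6). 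For $K\cong A_4=T_3$ and $K\cong A_5=Ico_3$ there is no index-two subgroup ($A_4$ has no subgroup of order six and $A_5$ is simple), so these contribute nothing; and for $K\cong S_4=Oct_3$ the unique index-two subgroup is $A_4=T_3$, giving $(Oct_3,T_3)$ of item~(3).

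It then remains to justify the two identifications asserted in the statement by a direct matrix computation. Placing the common axis along the $z$-axis, a horizontal $\pi$-rotation of $D_3(n)$ such as $\operatorname{diag}(1,-1,-1)$ is sent by $\J$ to the reflection $\operatorname{diag}(-1,1,1)$ in a vertical plane; running over the $n$ flip axes turns the improper part of $(D_3(n),\mu_3(n))$ into the $n$ reflections in the vertical planes through the axis, so the group acts as the planar dihedral group on the $xy$-plane and trivially on the axis, namely $I_2(n)\times\triv$. The same device identifies $(Oct_3,T_3)$ with the full tetrahedral group $A_3$, the $C_4$- and edge-$C_2$-rotations of $Oct_3\setminus T_3$ becoming the rotoreflections and diagonal mirror reflections of $A_3$. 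I expect the only genuine friction to be bookkeeping rather than anything conceptual: one must check that the two dihedral index-two subgroups of $D_3(2n)$ give mixed groups conjugate under a rotation about the axis (so that item~(6) is listed only once up to conjugation), and that the six families are pairwise non-conjugate, so that the classification is irredundant.
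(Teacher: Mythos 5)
Your proof is correct and follows the route the paper itself intends: the paper states this proposition without an explicit proof, deriving it implicitly from the immediately preceding trichotomy proposition, Klein's classification in \Cref{pro:Klein-class}, and the identifications $(Oct_3,T_3)=A_3$ and $(D_3(n),\mu_3(n))=I_2(n)\times\triv[1]$ established case by case in \Cref{sec:classification}. Your explicit enumeration of index-two subgroups of each finite subgroup of $SO_3(\mathbb R)$ via abelianizations, together with the observation that the two dihedral halves of $D_3(2n)$ yield conjugate mixed groups, supplies exactly the bookkeeping the paper leaves implicit.
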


We show in \Cref{sec:classification} that the reflection groups mentioned in \Cref{lem:bou-ref} occur in the above list as follows: $A_3=(Oct_3,T_3)$, $BC_3=\J Oct_3$, $H_3=\J Ico_3$, $I_2(n)\times\triv=(D_3(n),\mu_3(n))$, and
\begin{align*}
    I_2(n)\times A_1&=\begin{cases}\J D_3(n) &\text{if $n$ is even,}\\
                        (D_3(2n),D_3(n))    &\text{if $n$ is odd.}
                    \end{cases}
\end{align*}

\newpage
\def\percentage{0.71}
\begin{table}[h]
\begin{tabular}{ccc}
G &$\mathcal{A}^G$ & $\nicefrac{\mathcal A^G}G$ \\ \hline

$\mu_3(n) $   &
\begin{tikzpicture}[thick, scale=\percentage]
  \node (zero) at (0,2) {$l_0$};
  \node (whole) at (0,0) {$\mathbb R^3$};
  \draw (whole) -- (zero);
\end{tikzpicture} &
\begin{tikzpicture}[thick, scale=\percentage]
  \node (zero) at (0,2) {$\nicefrac{l_0}{G}$};
  \node (whole) at (0,0) {$\nicefrac{\mathbb R^3}{G}$};
  \draw (whole) -- (zero);
\end{tikzpicture}\\ \hline

\begin{tabular}{c}
$\J\mu_3(n)$\\
\mbox{for even } $n$ 
\end{tabular}
&
\begin{tikzpicture}[thick, scale=\percentage]
  \node (zero) at (0,2) {$0$};
  \node (l0) at (-1,0){$l_0$};
  \node (pi) at (1,-1){$l_0^\perp$};
  \node (whole) at (0,-3){$\mathbb R^3$};
  \draw (whole) -- (l0)--(zero);
    \draw (whole) -- (pi)--(zero);
\end{tikzpicture} &
\begin{tikzpicture}[thick, scale=\percentage]
  \node (zero) at (0,2) {$\nicefrac 0G$};
  \node (l0) at (-1,0){$\nicefrac{l_0}{G}$};
  \node (pi) at (1,-1){$\nicefrac{l_0^\perp}{G}$};
  \node (whole) at (0,-3){$\nicefrac{\mathbb R^3}G$};
  \draw (whole) -- (l0)--(zero);
    \draw (whole) -- (pi)--(zero);
\end{tikzpicture} \\ \hline

\begin{tabular}{c}
$\J\mu_3(n)$\\
\mbox{for odd } $n$ 
\end{tabular}
&
\begin{tikzpicture}[thick, scale=\percentage]
  \node (zero) at (0,2) {$O$};
  \node (l0) at (0,0) {$l_0$};
  \node (whole) at (0,-2) {$\mathbb R^3$};
  \draw (whole) -- (l0)--  (zero);
\end{tikzpicture} &
\begin{tikzpicture}[thick, scale=\percentage]
  \node (zero) at (0,2) {$\nicefrac{O}G$};
  \node (l0) at (0,0) {$\nicefrac{l_0}G$};
  \node (whole) at (0,-2) {$\nicefrac{\mathbb R^3}G$};
  \draw (whole) -- (l0) -- (zero);
\end{tikzpicture}\\ \hline

\begin{tabular}{c}
$(\mu_2(2n),\mu_2(n))$\\
\mbox{for even } $n$ 
\end{tabular}
&
\begin{tikzpicture}[thick,scale=\percentage]
    \node (zero) at (0,3.5) {$0$};
    \node (l) at (0,1.8) {$l_0$};
    \node (whole) at (0,0) {$\mathbb R^3$};
    \draw (zero) -- (l) -- (whole);
\end{tikzpicture}&
\begin{tikzpicture}[thick,scale=\percentage]
    \node (zero) at (0,3.5) {$0$};
    \node (l) at (0,1.8) {$\nicefrac{l_0}G$};
    \node (whole) at (0,0) {$\nicefrac{\mathbb R^3}G$};
    \draw (whole) -- (l) -- (zero);
\end{tikzpicture}\\ \hline

\begin{tabular}{c}
$(\mu_2(2n),\mu_2(n))$\\
\mbox{for odd } $n$ 
\end{tabular}
&
\begin{tikzpicture}[thick, scale=\percentage]
    \node (zero) at (0,4) {$O$};
    \node (l) at (1,2) {$l_0$};
    \node (lp) at (-1,0.5) {$l_0^\perp$};
    \node (whole) at (0,-1.5) {$\mathbb R^3$};
    \draw (whole) -- (lp) -- (zero) -- (l) -- (whole);
\end{tikzpicture}&
\begin{tikzpicture}[thick, scale=\percentage]
    \node (zero) at (0,4) {$O$};
    \node (l) at (1,2) {$\nicefrac{l_0}G$};
    \node (lp) at (-1,0.5) {$\nicefrac{l_0^\perp}G$};
    \node (whole) at (0,-1.5) {$\nicefrac{\mathbb R^3}G$};
    \draw (whole) -- (l) -- (zero) -- (lp) -- (whole);
\end{tikzpicture}\\ \hline

\end{tabular} 

\caption{The subspace arrangements of $\mu_3(n)$, $\J \mu_3(n)$, and $(\mu_2(2n), \mu_2(n))$.}
\label{tab:fig:arr-C_n}
\label{tab:fig:arr-C_evenxI}
\label{tab:fig:arr-C_oddxI}
\label{c2ncne}
\label{c2ncno}
\end{table}
\newpage

\def\percentage{0.75}
\begin{table}[h]
\begin{tabular}{ccc}
G &$\mathcal{A}^G$ & $\nicefrac{\mathcal A^G}G$ \\ \hline

\begin{tabular}{c}
$(D_3(n),\mu_3(n) )$\\
\mbox{for odd } $n$ 
\end{tabular}
&
\begin{tikzpicture}[thick,scale=\percentage]
    \node (l0) at (0,2) {$l_0$};
    \node (l1p) at (-1.5,0) {$\pi_1$};
    \node (lip) at (0,0) {$\dots$};
    \node (lnp) at (1.5,0) {$\pi_n$};
    \node (whole) at (0,-2) {$\mathbb R^3$};
   \draw  (l0) -- (l1p) -- (whole) -- (lip) -- (l0) -- (lnp) -- (whole);
\end{tikzpicture}
&
\begin{tikzpicture}[thick,scale=\percentage]
    \node (l) at (0,2) {$\nicefrac{l_0}G$};
    \node (lp) at (0,0) {$\nicefrac{\pi_0}G$};
    \node (whole) at (0,-2) {$\nicefrac{\mathbb R^3}G$};
    \draw (whole) -- (lp) -- (l);
\end{tikzpicture}\\ \hline

\begin{tabular}{c}
$(D_3(n),\mu_3(n) )$\\
\mbox{for even } $n$ 
\end{tabular}
&
\begin{tikzpicture}[thick,scale=\percentage]
    \node (l0) at (0,2) {$l_0$};
    \node (l1p) at (-1.5,0) {$\pi_1$};
    \node (lip) at (0,0) {$\dots$};
    \node (lnp) at (1.5,0) {$\pi_n$};
    \node (whole) at (0,-2) {$\mathbb R^3$};
   \draw  (l0) -- (l1p) -- (whole) -- (lip) -- (l0) -- (lnp) -- (whole);
\end{tikzpicture}
&
\begin{tikzpicture}[thick,scale=\percentage]
    \node (l) at (0,2) {$\nicefrac{l_0}G$};
    \node (lep) at (-1,0) {$\nicefrac{\pi_{even}}G$};
    \node (lop) at (1,0) {$\nicefrac{\pi_{odd}}G$};
    \node (whole) at (0,-2) {$\nicefrac{\mathbb R^3}G$};
    \draw (l) -- (lop) -- (whole) -- (lep) -- (l);
\end{tikzpicture}\\ \hline

\begin{tabular}{c}
$D_3(n)$\\
\mbox{for even } $n$ 
\end{tabular}
&
\begin{tikzpicture}[thick, scale=\percentage]
  \node (zero) at (0,2) {$O$};
  \node (l0) at (-1.5, 0) {$l_0$};
  \node (l1) at (-0.5, 0) {$l_1$};
  \node (li) at (0.5, 0) {$\dots$};
  \node (ln) at (1.5, 0) {$l_n$};
  \node (whole) at (0,-2) {$\mathbb R^3$};
  \draw (whole) -- (l0) -- (zero);
  \draw (whole) -- (l1) -- (zero);
  \draw (whole) -- (li) -- (zero);
  \draw (whole) -- (ln) -- (zero);
\end{tikzpicture}
&
\begin{tikzpicture}[thick, scale=\percentage]
  \node (zero) at (0,2) {$O$};
  \node (l0) at (-1.5, 0) {$\nicefrac{l_0}G$};
  \node (v) at (0, 0) {$\nicefrac{l_{even}}G$};
  \node (e) at (1.6, 0) {$\nicefrac{l_{odd}}G$};
  \node (whole) at (0,-2) {$\nicefrac{\mathbb R^3}G$};
  \draw (whole) -- (l0) -- (zero);
  \draw (whole) -- (e) -- (zero) -- (v) -- (whole);
\end{tikzpicture}\\ \hline

\begin{tabular}{c}
$D_3(n)$\\
\mbox{for odd } $n$ 
\end{tabular}
&
\begin{tikzpicture}[thick, scale=\percentage]
  \node (zero) at (0,2) {$O$};
  \node (l0) at (-1.5, 0) {$l_0$};
  \node (l1) at (-0.5, 0) {$l_1$};
  \node (li) at (0.5, 0) {$\dots$};
  \node (ln) at (1.5, 0) {$l_n$};
  \node (whole) at (0,-2) {$\mathbb R^3$};
  \draw (whole) -- (l0) -- (zero);
  \draw (whole) -- (l1) -- (zero);
  \draw (whole) -- (li) -- (zero);
  \draw (whole) -- (ln) -- (zero);
\end{tikzpicture}
&
\begin{tikzpicture}[thick, scale=\percentage]
  \node (zero) at (0,2) {$O$};
  \node (l0) at (-1, 0) {$\nicefrac{l_0}{G}$};
  \node (v) at (1, 0) {$\nicefrac{l_{>0}}{G}$};
  \node (whole) at (0,-2) {$\nicefrac{\mathbb R^3}G$};
  \draw (whole) -- (l0) -- (zero);
  \draw (whole) -- (v) -- (zero);
\end{tikzpicture}\\ \hline

\end{tabular} 

\caption{The subspace arrangements of $(D_3(n), \mu_3(n))$ and $D_3(n)$.}
\label{tab:fig:arr-D}
\label{D_nC_nx1}
\end{table}

\newpage

\begin{table}[h]
\begin{tabular}{ccc}
G &$\mathcal{A}^G$ & $\nicefrac{\mathcal A^G}G$ \\ \hline

$T_3$
&
\begin{tikzpicture}[thick, scale=\percentage]
  \node (zero) at (0,2) {$O$};
  \node (v1) at (-3, 0) {$v_1$};
  \node (v2) at (-2, 0) {$v_2$};
  \node (v3) at (-1, 0) {$v_3$};
  \node (v4) at (0, 0) {$v_4$};
  \node (e1) at (1, 0) {$e_1$};
  \node (e2) at (2, 0) {$e_2$};
  \node (e3) at (3, 0) {$e_3$};
  \node (whole) at (0,-2) {$\mathbb R^3$};
  \draw (whole) -- (v1) -- (zero);
  \draw (whole) -- (v2) -- (zero);
  \draw (whole) -- (v3) -- (zero);  
  \draw (whole) -- (v4) -- (zero);  
  \draw (whole) -- (e1) -- (zero);
  \draw (whole) -- (e2) -- (zero);
  \draw (whole) -- (e3) -- (zero);
\end{tikzpicture} &\hspace{30pt}
\begin{tikzpicture}[thick, scale=\percentage]
  \node (zero) at (0,2) {$O$};
  \node (v) at (-1, 0) {$\nicefrac{v}{T_3}$};
  \node (p) at (1, 0) {$\nicefrac{e}{T_3}$};
  \node (whole) at (0,-2) {$\nicefrac{\mathbb R^3}G$};
  \draw (whole) -- (p) -- (zero);
  \draw (whole) -- (v) -- (zero);
\end{tikzpicture}\\ \hline

$\J T_3$
&
\begin{tikzpicture}[thick, scale=\percentage]
  \node (zero) at (0,4) {$O$};
  \node (p12) at (1.6, 0) {$e_1^\perp$};
  \node (p13) at (2.3, 0) {$e_2^\perp$};
  \node (p14) at (3, 0) {$e_3^\perp$};
  \node (v1) at (-4, 2) {$v_1$};
  \node (v2) at (-3.3, 2) {$v_2$};
  \node (v3) at (-2.6, 2) {$v_3$};
  \node (v4) at (-1.9, 2) {$v_4$};
  \node (e1) at (-1.2, 2) {$e_1$};
  \node (e2) at (-0.5, 2) {$e_2$};
  \node (e3) at (0.2, 2) {$e_3$};
  \node (whole) at (0,-2) {$\mathbb R^3$};
  \draw (whole) -- (p12) -- (zero);
  \draw (whole) -- (p13) -- (zero);
  \draw (whole) -- (p14) -- (zero);
  \draw (whole) -- (v1) -- (zero);
  \draw (whole) -- (v2) -- (zero);
  \draw (whole) -- (v3) -- (zero);  
  \draw (whole) -- (v4) -- (zero);  
  \draw (whole) -- (e1) -- (zero);
  \draw (whole) -- (e2) -- (zero);
  \draw (whole) -- (e3) -- (zero);
\end{tikzpicture} &
\begin{tikzpicture}[thick, scale=\percentage]
  \node (zero) at (0,4) {$O$};
  \node (pi) at (1.4, 0) {$\nicefrac{e^\perp}{T_3}$};
  \node (v) at (-1.2, 2) {$\nicefrac{v}{T_3}$};
  \node (p) at (0, 2) {$\nicefrac{e}{T_3}$};
  \node (whole) at (0,-2) {$\nicefrac{\mathbb R^3}G$};
  \draw (whole) -- (pi) -- (zero);
  \draw (whole) -- (p) -- (zero);
  \draw (whole) -- (v) -- (zero);
\end{tikzpicture}\\\hline

$G=(Oct_3, T_3)=A_3$
&
\begin{tikzpicture}[thick,scale=\percentage]
  \node (zero) at (0,6) {$O$};
  \node (e1) at (-4,4) {$e_1$};
  \node (e2) at (-3,4) {$e_2$};
  \node (e3) at (-2,4) {$e_3$};
  \node (v1) at (-0.2,4) {$v_1$};
  \node (v2) at (1.2,4) {$v_2$};  
  \node (v3) at (2.6,4) {$v_3$};  
  \node (v4) at ( 4,4) {$v_4$};  
  \node (whole) at (0,0) {$\mathbb R^3$};

    \draw (e1) -- (zero) -- (e2);
    \draw (e3) -- (zero) -- (v1);
    \draw (v3) -- (zero) -- (v2);
    \draw (zero) -- (v4);
    
  \node (f1p) at ( 0.8,2) {$f_1^\perp$};
  \node (f2p) at ( 2.4,2) {$f_2^\perp$};
  \node (f3p) at ( 4  ,2) {$f_3^\perp$};
  \node (f4p) at (-4  ,2) {$f_4^\perp$};
  \node (f5p) at (-2.4,2) {$f_5^\perp$};
  \node (f6p) at (-0.8,2) {$f_6^\perp$};

    \draw (e1) -- (f1p);
    \draw (e1) -- (f4p);
    \draw (e2) -- (f2p);
    \draw (e2) -- (f5p);
    \draw (e3) -- (f3p);
    \draw (e3) -- (f6p);

    \draw (v1) -- (f4p);
    \draw (v1) -- (f5p);
    \draw (v1) -- (f6p);
    \draw (v2) -- (f2p); 
    \draw (v2) -- (f3p);
    \draw (v2) -- (f4p);
    \draw (v3) -- (f1p);
    \draw (v3) -- (f3p);
    \draw (v3) -- (f5p);
    \draw (v4) -- (f2p);
    \draw (v4) -- (f1p);
    \draw (v4) -- (f6p);

    \draw (whole) -- (f1p);
    \draw (whole) -- (f2p);
    \draw (whole) -- (f3p);
    \draw (whole) -- (f4p);
    \draw (whole) -- (f5p);
    \draw (whole) -- (f6p);
\end{tikzpicture}&
\begin{tikzpicture}[thick,scale=\percentage]
    
    \node (zero) at (0, 6) {$O$};
    \node (e) at (-1, 4) {$e$};
    \node (v) at ( 1, 4) {$v$};
    \node (p) at ( 0, 2) {$f^\perp$};
    \node (whole) at (0, 0) {$\nicefrac{\mathbb R^3}G$};

    \draw (whole) -- (p);
    \draw (zero) -- (e) -- (p) --(v) -- (zero);
\end{tikzpicture} \\ \hline

$G=Oct_3$
&
\begin{tikzpicture}[thick, scale=\percentage]
  \node (zero) at (0,2) {$O$};
  \node (v1) at (-4, 0) {$e_1$};
  \node (v) at (-3, 0) {$\dots$};
  \node (v6) at (-2, 0) {$e_3$};
  \node (e1) at (-1, 0) {$f_1$};
  \node (e) at (0, 0) {$\dots$};
  \node (e15) at (1, 0) {$f_{6}$};
  \node (f1) at (2, 0) {$v_1$};
  \node (f) at (3, 0) {$\dots$};
  \node (f10) at (4, 0) {$v_{4}$};
  \node (whole) at (0,-2) {$\mathbb R^3$};
  \draw (whole) -- (v1) -- (zero);
  \draw (whole) -- (v) -- (zero);
  \draw (whole) -- (v6) -- (zero);
  \draw (whole) -- (e1) -- (zero);
  \draw (whole) -- (e) -- (zero);
  \draw (whole) -- (e15) -- (zero);
  \draw (whole) -- (f1) -- (zero);
  \draw (whole) -- (f) -- (zero);
  \draw (whole) -- (f10) -- (zero);
\end{tikzpicture} &
\begin{tikzpicture}[thick, scale=\percentage]
  \node (zero) at (0,2) {$O$};
  \node (v) at (-2,0) {$\nicefrac{e}{G}$};
  \node (e) at ( 0,0) {$\nicefrac{f}{G}$};
  \node (f) at ( 2,0) {$\nicefrac{v}{G}$};  
  \node (whole) at (0,-2) {$\nicefrac{\mathbb R^3}G$};
  \draw (whole) -- (v) -- (zero);
  \draw (whole) -- (e) -- (zero);
  \draw (whole) -- (f) -- (zero);
\end{tikzpicture}\\ \hline
\end{tabular} 

\caption{The subspace arrangements of $T_3$, $\J T_3$, $A_3$, and $Oct_3$.}
\label{tab:fig:a3}
\label{tab:fig:arr-A_4}
\label{tab:fig:arr-A_4xI}
\label{table:oct-3}
\end{table}

\newpage

\begin{table}[h]
\begin{tabular}{ccc}
G &$\mathcal{A}^G$ & $\nicefrac{\mathcal A^G}G$ \\ \hline

\begin{tabular}{c}
$G=\J D_3(n)$\\
\mbox{for even } $n$ 
\end{tabular}
&
\begin{tikzpicture}[thick, scale=\percentage]
  \node (zero) at (0,4) {$O$};

  \node (l0) at (-4, 2)  {$l_0$};
  \node (l1) at (1, 2)   {$l_1$};
  \node (l) at  (1.9, 2) {$\dots$};
  \node (lk) at (2.8, 2) {$l_k$};
  \node (lk1) at(-2, 2)  {$l_{k+1}$};
  \node (ld) at (-1.1,2) {$\dots$};
  \node (l2k) at(-0.2,2) {$l_{2k}$};
  
  \node (l0p) at (-4,0) {$l_0^\perp$};
  \node (l1p) at (-2,0) {$l_1^\perp$};
  \node (lp) at (-1.1,0) {$\dots$};
  \node (lkp) at (-0.2,0) {$l_k^\perp$};
  \node (lk1p) at (1,0) {$l_{k+1}^\perp$};
  \node (ldp) at (1.9,0) {$\dots$};
  \node (l2kp) at (2.8,0) {$l_{2k}^\perp$};
  
  \node (whole) at (0,-2) {$\mathbb R^3$};
  
  \draw (zero) -- (l0);
  \draw (zero) -- (l1);
  \draw (zero) -- (l);
  \draw (zero) -- (lk);
  \draw (zero) -- (lk1);
  \draw (zero) -- (ld);
  \draw (zero) -- (l2k);

  \draw (l) -- (ldp);
  \draw (ld) -- (lp);
    
  \draw (l0) -- (l1p);
  \draw (l0) -- (lp);
  \draw (l0) -- (lkp);
  \draw (l0) -- (lk1p);
  \draw (l0) -- (ldp);
  \draw (l0) -- (l2kp);

  \draw (l0p) -- (l1);
  \draw (l0p) -- (l);
  \draw (l0p) -- (lk);
  \draw (l0p) -- (lk1);
  \draw (l0p) -- (ld);
  \draw (l0p) -- (l2k);

  \draw (l1) -- (lk1p);
  \draw (lk) -- (l2kp);
  \draw (lk1) -- (l1p);
  \draw (l2k) -- (lkp);

  \draw (whole) -- (l0p);
  \draw (whole) -- (l1p);
  \draw (whole) -- (lp);
  \draw (whole) -- (lkp);
  \draw (whole) -- (lk1p);
  \draw (whole) -- (ldp);
  \draw (whole) -- (l2kp);
\end{tikzpicture} 
&
\begin{tikzpicture}[thick, scale=\percentage]
  \node (zero) at (0,4) {$\nicefrac 0G$};

  \node (l0) at (-2, 2.2) {$\nicefrac{l_0}{G}$};
  \node (le) at (2, 2.2) {$\nicefrac{l_{even}}{G}$};
  \node (lo) at (0, 2.2) {$\nicefrac{l_{odd}}{G}$};

  \node (l0p) at (2,0) {$\nicefrac{l_0^\perp}{G}$};
  \node (lep) at (-2,0) {$\nicefrac{l_{even}^\perp}{G}$};
  \node (lop) at (0,0) {$\nicefrac{l_{odd}^\perp}{G}$};

  \node (whole) at (0,-1.8) {$\nicefrac{\mathbb R^3}{G}$};

  \draw (zero) -- (l0);
  \draw (zero) -- (le);
  \draw (zero) -- (lo);
 
  \draw (l0) -- (lep);
  \draw (l0) -- (lop);

  \draw (le) -- (lop);
  \draw (lo) -- (lep);

  \draw (le) -- (l0p) -- (lo);

  \draw (lop) -- (whole);
  \draw (lep) -- (whole);
  \draw (l0p) -- (whole);
\end{tikzpicture}\\ \hline

\begin{tabular}{c}
$G=\J D_3(n)$\\
\mbox{for odd } $n$ 
\end{tabular}
&
\begin{tikzpicture}[thick, scale=\percentage]
  \node (zero) at (0,4) {$O$};
  \node (pi1) at (-3, 0) {$l_1^\perp$};
  \node (pi) at (-2, 0) {$\dots$};
  \node (pik) at (-1, 0) {$l_{n}^\perp$};
  
  \node (l0) at (-2, 2) {$l_0$};
  \node (v1) at (1, 2) {$l_1$};
  \node (v) at (2, 2) {$\dots$};
  \node (vk) at (3, 2) {$l_{n}$};
  \node (whole) at (0,-2) {$\mathbb R^3$};
  
  \draw (whole) -- (v1);
  \draw (whole) -- (v);
  \draw (whole) -- (vk);
  
  \draw (l0) -- (pi1);
  \draw (l0) -- (pi);
  \draw (l0) -- (pik);
  
  \draw (zero) -- (l0);
  \draw (v1) -- (zero);
  \draw (v) -- (zero);
  \draw (vk) -- (zero);
  
  \draw (pi1) -- (whole);
  \draw (pi) -- (whole);
  \draw (pik) -- (whole);
\end{tikzpicture}
&
\begin{tikzpicture}[thick, scale=\percentage]
  \node (zero) at (0,4) {$\nicefrac 0G$};
  \node (pi) at (-1,0) {$\nicefrac{l_i^\perp}G$};
  
  \node (l0) at (-1, 2) {$\nicefrac{l_0}G$};
  \node (p) at (1, 2) {$\nicefrac{l_i}G$};
  \node (whole) at (0,-2) {$\nicefrac{\mathbb R^3}G$};
  
  \draw (whole) -- (pi);
  \draw (whole) -- (p);
 
  \draw (l0) -- (pi);
  
  \draw (p) -- (zero);
  \draw (l0) -- (zero);
\end{tikzpicture}\\ \hline

\begin{tabular}{c}
$(D_3({2n}),D_3(n))$\\
\mbox{for odd } $n$ 
\end{tabular}
&
\begin{tikzpicture}[thick,scale=\percentage]
    \node (zero) at (0,4) {$O$};
    \node (l0) at (-4,2) {$l_0$};
    \node (l1) at (-1,2) {$l_1$};
    \node (ld1) at (0,2) {$\dots$};
    \node (ln) at  (1,2) {$l_n$};
    \node (ln2) at (2,2) {$l_{n+2}$};
    \node (ld2) at (3,2) {$\dots$};
    \node (llo) at (4,2) {$l_{2n-1}$};
    \node (l2p) at (-1,0) {$l_{2}^\perp$};
    \node (lpd1) at (0,0) {$\dots$};
    \node (ln-1p) at (1,0) {$l_{n-1}^\perp$};
    \node (ln+1p) at (-4,0) {$l_{n+1}^\perp$};
    \node (lpd2) at (-3,0) {$\dots$};
    \node (l2np) at (-2,0) {$l_{2n}^\perp$};
    \node (l0p) at (4,0) {$l_0^\perp$};
    \node (whole) at (0,-2) {$\mathbb R^3$};
    \draw (whole) -- (lpd2) -- (ld1) -- (zero) -- (ld2) -- (lpd1) -- (whole) -- (l2np) -- (ln) -- (l0p) -- (ln) -- (zero) -- (l0) -- (l2p) -- (whole) -- (l0p) -- (llo) -- (zero) -- (l1) -- (l0p) -- (ln2) -- (zero) ;
    \draw (ln-1p) -- (whole) -- (ln+1p) -- (l1) ;
    \draw (ln) -- (l2np) -- (l0) -- (ln-1p) -- (llo) ;
    \draw (ln+1p) -- (l0) -- (lpd1);
    \draw (ln2) -- (l2p);
    \draw (l0) -- (lpd2);
    \draw (ld1) -- (l0p) -- (ld2);
\end{tikzpicture}&
\begin{tikzpicture}[thick,scale=\percentage]
    \node (zero) at (0,4) {$O$};
    \node (l0) at (-1,2) {$\nicefrac{l_0}G$};
    \node (lo) at  (1,2) {$\nicefrac{l_{odd}}G$};
    \node (lep) at (-1,0) {$\nicefrac{l_{even}^\perp}G$};
    \node (l0p) at (1,0) {$\nicefrac{l_0^\perp}G$};
    \node (whole) at (0,-2) {$\nicefrac{\mathbb R^3}G$};
    \draw (l0) -- (lep) -- (whole) -- (l0p) -- (lo) -- (zero) -- (l0);
    \draw (lep) -- (lo);
\end{tikzpicture}\\ \hline

\end{tabular} 

\caption{The subspace arrangements of $\J D_3(n)$, and $(D_3(2n), D_3(n))$ for $n$ odd.}
\label{tab:fig:arr-D_evenxI}
\label{tab:fig:arr-D_oddxI}
\label{table:D2nDn-n-odd}
\end{table}
\newpage
\def\percentage{0.6}
\begin{table}[h]
\begin{tabular}{ccc}
G &$\mathcal{A}^G$ & $\nicefrac{\mathcal A^G}G$ \\ \hline

\begin{tabular}{c}
$(D_3({2n}),D_3(n))$\\
\mbox{for even } $n$ 
\end{tabular}
&
\begin{tikzpicture}[thick,scale=\percentage]
    \node (zero) at (0,4) {$O$};
    \node (l0) at (-1.5,2) {$l_0$};
    \node (l1) at (0.5,2) {$l_1$};
    \node (lo) at (1.5,2) {$\dots$};
    \node (llo) at (2.5,2) {$l_{2n-1}$};
    \node (l2p) at (-2.5,0) {$l_{2}^\perp$};
    \node (lep) at (-1.5,0) {$\dots$};
    \node (llep) at (-0.5,0) {$l_{2n}^\perp$};
    \node (whole) at (0,-2) {$\mathbb R^3$};
    \draw (whole) -- (llep) -- (l0) -- (lep) -- (whole) -- (lo) -- (zero) -- (l0) -- (l2p) -- (whole) -- (llo) -- (zero) -- (l1) -- (whole);
\end{tikzpicture}&
\begin{tikzpicture}[thick,scale=\percentage]
    \node (zero) at (0,4) {$O$};
    \node (l0) at (-1,2) {$\nicefrac{l_0}G$};
    \node (lo) at (1,2) {$\nicefrac{l_{odd}}G$};
    \node (lep) at (-1,0) {$\nicefrac{l_{even}^\perp}G$};
    \node (whole) at (0,-2) {$\nicefrac{\mathbb R^3}G$};
    \draw (l0) -- (lep) -- (whole) -- (lo) -- (zero) -- (l0);
\end{tikzpicture}\\ \hline

\def\percentage{0.8}

$\J Oct_3$
&
\begin{tikzpicture}[thick, scale=0.6]
  \node (zero) at (0,5) {$O$};
  \node (e1) at (-6, 3) {$e_1$};
  \node (e2) at (-5, 3) {$e_2$};
  \node (e3) at (-4, 3) {$e_3$};
  \node (f1) at (-3, 3) {$f_1$};
  \node (f2) at (-2, 3) {$f_2$};
  \node (f3) at (-1, 3) {$f_3$};
  \node (f4) at ( 0, 3) {$f_4$};
  \node (f5) at ( 1, 3) {$f_5$};
  \node (f6) at ( 2, 3) {$f_6$};
  \node (v1) at ( 3, 3) {$v_1$};
  \node (v2) at ( 4, 3) {$v_2$};
  \node (v3) at ( 5, 3) {$v_3$};
  \node (v4) at ( 6, 3) {$v_4$};
  
  \node (e1p) at (-6  ,-2) {$e_1^\perp$};
  \node (e2p) at (-4.5,-2) {$e_2^\perp$};
  \node (e3p) at (-3  ,-2) {$e_3^\perp$};
  \node (f1p) at ( 3  ,-2) {$f_1^\perp$};
  \node (f2p) at ( 4.5,-2) {$f_2^\perp$};
  \node (f3p) at ( 6  ,-2) {$f_3^\perp$};
  \node (f4p) at (-1.5,-2) {$f_4^\perp$};
  \node (f5p) at ( 0  ,-2) {$f_5^\perp$};
  \node (f6p) at (1.5  ,-2) {$f_6^\perp$};
  \node (whole) at (0,-4) {$\mathbb R^3$};

    \draw (e1) -- (e2p);
    \draw (e1) -- (e3p);
    \draw (e2) -- (e3p);
    \draw (e2) -- (e1p);
    \draw (e3) -- (e2p);
    \draw (e3) -- (e1p);
    \draw (e1) -- (f1p);
    \draw (e1) -- (f4p);
    \draw (e2) -- (f2p);
    \draw (e2) -- (f5p);
    \draw (e3) -- (f3p);
    \draw (e3) -- (f6p);

    \draw (v1) -- (f4p);
    \draw (v1) -- (f5p);
    \draw (v1) -- (f6p);
    \draw (v2) -- (f2p); 
    \draw (v2) -- (f3p);
    \draw (v2) -- (f4p);
    \draw (v3) -- (f1p);
    \draw (v3) -- (f3p);
    \draw (v3) -- (f5p);
    \draw (v4) -- (f2p);
    \draw (v4) -- (f1p);
    \draw (v4) -- (f6p);

    \draw (f1) -- (f4p);
    \draw (f2) -- (f5p);
    \draw (f3) -- (f6p);
    \draw (f4) -- (f1p);
    \draw (f5) -- (f2p);
    \draw (f6) -- (f3p);

    \draw (f1) -- (e1p);
    \draw (f4) -- (e1p);
    \draw (f2) -- (e2p);
    \draw (f5) -- (e2p);
    \draw (f3) -- (e3p);
    \draw (f6) -- (e3p);

    \draw (e1) -- (zero);
    \draw (e2) -- (zero);
    \draw (e3) -- (zero);
    \draw (v1) -- (zero);
    \draw (v2) -- (zero);
    \draw (v3) -- (zero);
    \draw (v4) -- (zero);
    \draw (f1) -- (zero);
    \draw (f2) -- (zero);
    \draw (f3) -- (zero);
    \draw (f4) -- (zero);
    \draw (f5) -- (zero);
    \draw (f6) -- (zero);
  
    \draw (whole) -- (e1p);
    \draw (whole) -- (e2p);
    \draw (whole) -- (e3p);
    \draw (whole) -- (f1p);
    \draw (whole) -- (f2p);
    \draw (whole) -- (f3p);
    \draw (whole) -- (f4p);
    \draw (whole) -- (f5p);
    \draw (whole) -- (f6p);
\end{tikzpicture}&
\begin{tikzpicture}[thick, scale=0.8]
  \node (zero) at (0,4.5) {$O$};
  \node (e) at (-2,2.5) {$\nicefrac eG$};
  \node (f) at ( 0,2.5) {$\nicefrac fG$};
  \node (v) at ( 2,2.5) {$\nicefrac vG$};  
  \node (ep) at (-1,0) {$\nicefrac{e^\perp}G$};  
  \node (fp) at ( 1,0) {$\nicefrac{f^\perp}G$};  
   
  \node (whole) at (0,-2) {$\nicefrac{\mathbb R^3}G$};
  \draw (f) -- (zero) -- (e) -- (ep)  -- (whole) -- (fp) -- (e);
    \draw (zero) -- (v) -- (fp) -- (f) -- (ep);
\end{tikzpicture}\\ \hline

$Ico_3$
&
\begin{tikzpicture}[thick, scale=0.7]
  \node (zero) at (0,2) {$O$};
  \node (v1) at (-4, 0) {$v_1$};
  \node (v) at (-3, 0) {$\dots$};
  \node (v6) at (-2, 0) {$v_6$};
  \node (e1) at (-1, 0) {$e_1$};
  \node (e) at (0, 0) {$\dots$};
  \node (e15) at (1, 0) {$e_{15}$};
  \node (f1) at (2, 0) {$f_1$};
  \node (f) at (3, 0) {$\dots$};
  \node (f10) at (4, 0) {$f_{10}$};
  \node (whole) at (0,-2) {$\mathbb R^3$};
  \draw (whole) -- (v1) -- (zero);
  \draw (whole) -- (v) -- (zero);
  \draw (whole) -- (v6) -- (zero);
  \draw (whole) -- (e1) -- (zero);
  \draw (whole) -- (e) -- (zero);
  \draw (whole) -- (e15) -- (zero);
  \draw (whole) -- (f1) -- (zero);
  \draw (whole) -- (f) -- (zero);
  \draw (whole) -- (f10) -- (zero);
\end{tikzpicture} &
\begin{tikzpicture}[thick, scale=0.7]
  \node (zero) at (0,2) {$O$};
  \node (v) at (-2, 0) {$\nicefrac{v}{G}$};
  \node (e) at (0, 0) {$\nicefrac{e}{G}$};
  \node (f) at (2, 0) {$\nicefrac{f}{G}$};  
  \node (whole) at (0,-2) {$\nicefrac{\mathbb R^3}G$};
  \draw (whole) -- (v) -- (zero);
  \draw (whole) -- (e) -- (zero);
  \draw (whole) -- (f) -- (zero);
\end{tikzpicture}\\ \hline

\def\percentage{0.8}

$H_3=\J Ico_3$
&
\begin{tikzpicture}[thick,scale=0.8]
  \node (zero) at (0,4) {$O$};
   
  \node (v1) at (-3.7, 2) {$v_1$};
  \node (v) at (-3, 2) {$\dots$};
  \node (vv) at (-2.3, 2) {$\dots$};
  \node (v6) at (-1.6, 2) {$v_6$};
  \node (e1) at (-0.9, 2) {$e_1$};
  \node (e) at (-0.2, 2) {$\dots$};
  \node (ee) at (0.5, 2) {$\dots$};
  \node (e15) at (1.2, 2) {$e_{15}$};
  \node (f1) at (1.9, 2) {$f_1$};
  \node (f) at (2.6, 2) {$\dots$};
  \node (ff) at (3.3, 2) {$\dots$};
  \node (f10) at (4, 2) {$f_{10}$};

  \node (p1) at (-3, 0) {$\pi_1$};
  \node (p) at (-1, 0) {$\dots$};
  \node (pp) at (1, 0) {$\dots$};
  \node (p15) at (3, 0) {$\pi_{15}$};

  \node (whole) at (0,-1.5) {$\mathbb R^3$};
  
  \draw (zero) -- (v1);
  \draw (zero) -- (v);
  \draw (zero) -- (vv);
  \draw (zero) -- (v6);
  \draw (zero) -- (e1);
  \draw (zero) -- (e);
  \draw (zero) -- (ee);
  \draw (zero) -- (e15);
  \draw (zero) -- (f1);
  \draw (zero) -- (f);
  \draw (zero) -- (ff);
  \draw (zero) -- (f10);
  
  \draw (p1) -- (v6);
  \draw (p1) -- (v);
  \draw (p1) -- (e);
  \draw (p1) -- (ee);
  \draw (p1) -- (f1);
  \draw (p1) -- (f);
  
  \draw (p15) -- (v1);
  \draw (p15) -- (vv);
  \draw (p15) -- (e);
  \draw (p15) -- (ee);
  \draw (p15) -- (f10);
  \draw (p15) -- (ff);
  
  \draw (p) -- (v);
  \draw (p) -- (vv);
  \draw (p) -- (e1);
  \draw (p) -- (e15);
  \draw (p) -- (f);
  \draw (p) -- (ff);
  
  \draw (pp) -- (v);
  \draw (pp) -- (vv);
  \draw (pp) -- (e1);
  \draw (pp) -- (e15);
  \draw (pp) -- (f);
  \draw (pp) -- (ff);
  
  \draw (p1) -- (whole);
  \draw (p) -- (whole);
  \draw (pp) -- (whole);
  \draw (p15) -- (whole);
\end{tikzpicture} 
&
\begin{tikzpicture}[thick, scale=0.8]
  \node (zero) at (0,4) {$O$};
  \node (pi) at (0, 0) {$\nicefrac{\pi}{...}$};
  \node (v) at (-2, 2) {$\nicefrac{v}{...}$};
  \node (e) at (0, 2) {$\nicefrac{e}{...}$};
  \node (f) at (2, 2) {$\nicefrac{f}{...}$};  
  \node (whole) at (0,-1.6) {$\nicefrac{\mathbb R^3}{G}$};
  \draw (whole) -- (pi) -- (v) -- (zero);
  \draw (pi) -- (e) -- (zero);
  \draw (pi) -- (f) -- (zero);
\end{tikzpicture}
\end{tabular} 

\caption{The subspace arrangements of $\J Oct_3$, $Ico_3$, $H_3$, and $(D_3(2n), D_3(n))$ for $n$ even.}
\label{table:D2nDn-n-even}
\label{tab:fig:arr-S_4xI}
\label{tab:fig:arr-A_5}
\label{tab:fig:arr-A_5xI}
\end{table}

\newpage
\section{Complete Classification in Rank Three}\label{sec:classification}
In this section, we compute the subspace arrangements $\mathcal A^G$ for all groups of rank three.
We show the following result about groups strictly generated in codimension two.
\begin{TheoM}\label{thm-L_n}
Suppose $G$ is strictly generated in codimension two. 
Then $\mathcal{L}(\mathcal{A}^G)$ is isomorphic, as a poset, to $\mathcal L_n$ for $n\neq 2$.
More precisely, for any positive integer $n\neq 2$, there exists a finite linear group $G$ in ${GL}_3(\mathbb{R})$ strictly generated in codimension two such that $\mathcal{L}(\mathcal{A}^G)\cong \mathcal L_n$. ($\mathcal L_n$ is shown in \Cref{fig:Ln}.)
\end{TheoM}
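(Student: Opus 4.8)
The plan is to reduce everything to the geometry of rotation axes in $SO_3(\mathbb R)$ and then read off the intersection lattice. First I would apply \Cref{lem:equivariant} to conjugate $G$ into $O_3(\mathbb R)$, and then \Cref{flgso3} to conclude $G\subseteq SO_3(\mathbb R)$. Every non-identity element of $SO_3(\mathbb R)$ is a rotation whose fixed-point set is exactly a line through the origin (its axis), so the only candidate subspaces in $\mathcal A^G$ have dimension $1$ or $0$; by \Cref{pro:no-cod-one} there are no codimension-one (i.e.\ dimension-two) subspaces. Hence the maximal subspaces of $\mathcal A^G$ are precisely the finitely many distinct rotation axes $l_1,\dots,l_n$ of the non-identity elements of $G$.

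Next I would compute $\mathcal L(\mathcal A^G)$. Since any two distinct lines through the origin in $\mathbb R^3$ meet only at $O$, every nonempty intersection of the $l_i$ is either some $l_i$ itself or the origin. The intersection poset therefore collapses to $\hat 0=V$, the $n$ atoms $l_1,\dots,l_n$, and the top element $\hat 1=O=\bigcap_i l_i$ (present as soon as $n\ge 2$); this is exactly $\mathcal L_n$ of \Cref{fig:Ln}. In the degenerate case $n=1$ no intersection produces $O$, so $\hat 1=l_1$ and the lattice is the two-element chain, which we identify with $\mathcal L_1$.

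The crux is to show $n\neq 2$, that is, that no nontrivial finite subgroup of $SO_3(\mathbb R)$ has exactly two rotation axes. Suppose $l_1,l_2$ were the only two. For $i=1,2$ set $H_i=\operatorname{Fix}_{\rho}(l_i)$; since a non-identity rotation fixes pointwise exactly its own axis, $H_i$ equals $\{e\}$ together with all rotations of $G$ about $l_i$, a cyclic subgroup. Because every non-identity element of $G$ is a rotation about $l_1$ or about $l_2$, we get $G=H_1\cup H_2$. A group is never the union of two proper subgroups, so $H_1=G$ or $H_2=G$; either way $G$ has a single axis, contradicting the existence of the other. I expect this union-of-subgroups step to be the main (and essentially only nontrivial) obstacle; alternatively one reaches the same conclusion by inspecting the five families of \Cref{pro:Klein-class}, whose cyclic, dihedral, tetrahedral, octahedral, and icosahedral members have respectively $1$; $3$ or $k+1$ (for $k\ge 3$); $7$; $13$; and $31$ axes, none equal to $2$.

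Finally I would establish realizability for every $n\neq 2$. The cyclic group $\mu_3(k)$ has a single axis and realizes $\mathcal L_1$; the Klein four-group $D_3(2)$ has three mutually orthogonal $2$-fold axes and realizes $\mathcal L_3$; and for $n\ge 4$ the dihedral group $D_3(n-1)$ (with $n-1\ge 3$) has one principal axis together with $n-1$ perpendicular $2$-fold axes, hence exactly $n$ distinct axes, realizing $\mathcal L_n$. This exhibits every positive integer $n\neq 2$, completing the proof.
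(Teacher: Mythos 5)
Your proof is correct, and its skeleton matches the paper's: use \Cref{pro:no-cod-one} to exclude planes, observe that the remaining subspaces are rotation axes (plus the origin once there are at least two of them), so the lattice has the shape $\mathcal L_n$; then rule out $n=2$ and exhibit witnesses, with $D_3(n-1)$ covering all $n\ge 3$ in both proofs. You diverge in two places, both to your advantage. For $n\neq 2$, the paper's one-line argument --- the product of two rotations $r_1,r_2\in G$ is ``a rotation distinct from $r_1$ and $r_2$'' --- literally produces only a new \emph{element}, not the new \emph{axis} that is needed; your reformulation, that $G=\operatorname{Fix}_\rho(l_1)\cup\operatorname{Fix}_\rho(l_2)$ is impossible because no group is the union of two proper subgroups, closes exactly that gap (the standard proof of that fact, taking $x\in H_1\setminus H_2$ and $y\in H_2\setminus H_1$ so that $xy\notin H_1\cup H_2$, is the paper's product argument carried out at the level of fixators, where ``$xy\notin H_i$'' says precisely that its axis differs from $l_i$). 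For $n=1$, the paper's witness $\J\mu_2(2k+1)$ actually fails the hypothesis: read as a rank-three group it is, by the paper's own \Cref{allGroupData}, strictly generated in codimension three, not two. Your witness $\mu_3(k)$ does satisfy the hypothesis, at the cost of reading $\mathcal L_1$ degenerately as the two-element chain with $\hat 1=l_1$ --- an ambiguity already inherent in the theorem's statement, since a one-line arrangement genuinely has a two-element intersection lattice; you flag and resolve this explicitly where the paper glosses over it. The closing cross-check against the axis counts in \Cref{pro:Klein-class} is redundant but harmless.
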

\begin{proof}
By \Cref{pro:no-cod-one}, if $G$ is strictly generated in codimension two, then there are no planes in the arrangement $\mathcal{A}^G$.
Moreover, there is at least a line in $\mathcal{A}^G$ otherwise $G$ is not generated in codimension two.
Hence, $\mathcal{A}^G$ is poset-wise isomorphic to $\mathcal L_n$.
   
If $n=1$, then set $G=\J \mu_2(2k+1)$, see \Cref{sec:mu-2xI-in-GL-3}. 
If $n>2$, we set $G=D_3(n-1)$, see \Cref{sec:D3n-in-GL-3}.
    
Finally, we observe that $\mathcal{A}^G$ cannot have only two lines. 
Indeed, if there are two rotations $r_1, r_2\in G$ the product $r_1 r_2$ is a rotation distinct from $r_1$ and $r_2$.
\end{proof}

Moreover, we also classify the intersection lattice of the quotient arrangement.
\begin{TheoM}
Let $G$ be a rank three linear group strictly generated in codimension two.
The intersection lattice of the closures of the Luna strata of $\nicefrac{V}{G}$ is isomorphic to $\mathcal L_n$ for $n\in\{2,3\}$.    
\end{TheoM}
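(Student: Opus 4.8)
The plan is to reduce everything to the classification in \Cref{pro:Klein-class}: up to conjugacy $G$ is one of $\mu_3(n)$, $D_3(n)$, $T_3$, $Oct_3$, or $Ico_3$, each sitting inside $SO_3(\mathbb R)$ by \Cref{flgso3}. First I would translate the statement about Luna strata into a purely group-theoretic count. The closures of the Luna strata are exactly the images $\pi(V^H_\rho)$ of the closed strata, and, since $gV^H_\rho=V^{gHg^{-1}}_\rho$, two such closures coincide precisely when the stabilizer subgroups are conjugate; using the bundle description of the stratification this identifies the poset of stratum closures with the orbit poset $\nicefrac{\mathcal L(\mathcal A^G)}{G}$. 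By \cref{eq:latticeOfIntersection} and \Cref{pro:no-cod-one} the elements of $\mathcal L(\mathcal A^G)$ other than $\hat 0 = V$ are the rotation axes (there are no reflection planes) together with the origin $\hat 1$; hence the atoms of the quotient poset are in bijection with the $G$-orbits of the lines of $\mathcal A^G$, and the top element is the orbit of the origin. Because $G$ acts on $\mathcal L(\mathcal A^G)\cong\mathcal L_m$ by poset automorphisms fixing $\hat 0$ and $\hat 1$ and merely permuting the atoms, passing to orbits again yields a two-level poset $\mathcal L_n$ whose number of atoms $n$ is the number of $G$-orbits of rotation axes. So the theorem reduces to showing this number is $2$ or $3$.

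Second, I would carry out the orbit count family by family, using that each rotation axis is the axis of the cyclic subgroup fixing it and that $G$ acts on the vertices, edge-midpoints, and face-centres of its defining polytope (tetrahedron, cube, icosahedron), or on the vertices and edges of a regular $n$-gon. For $T_3$ the $4$ order-three axes (through vertices and opposite faces) form one orbit by transitivity on vertices, and the $3$ order-two axes (through opposite edge-midpoints) form a second, giving $2$ atoms; for $Oct_3$ and $Ico_3$ the axes split by rotation order into three transitive families, giving $3$ atoms. For $D_3(n)$ the principal axis is a single fixed orbit, while the $n$ secondary order-two axes split according to the parity of $n$: for odd $n$ they are all equivalent under the principal rotation, giving one orbit, whereas for even $n$ they fall into the $n/2$ vertex-axes and the $n/2$ edge-midpoint axes, giving two orbits. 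Hence $D_3(n)$ contributes $2$ atoms for odd $n$ and $3$ for even $n$, matching \Cref{tab:fig:arr-D}.

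Third, to upgrade ``$2$ or $3$ atoms'' to ``isomorphic to $\mathcal L_n$ with $n\in\{2,3\}$'', I would observe that for each essential group in the list distinct axes meet only at the origin, so the origin is a genuine codimension-three stratum and its orbit is a top element $\hat 1\neq\hat 0$; combined with the height-two structure of the orbit poset from the first paragraph this gives exactly $\mathcal L_2$ or $\mathcal L_3$. The one extreme case is the non-essential cyclic family $\mu_3(n)=\mu_2(n)\times\triv$, which has a single fixed axis and no codimension-three stratum, so its quotient is the degenerate chain $\mathcal L_1$; I would note that it is precisely the essential hypothesis (equivalently $G\neq\mu_3(n)$) that confines the count to $\{2,3\}$.

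Finally, the step I expect to be the main obstacle is the dihedral parity analysis: proving cleanly that the secondary axes of $D_3(n)$ form a single orbit for odd $n$ but split into the two combinatorially distinct families for even $n$, and tracking the corresponding stabilizers so that the quotient is $\mathcal L_2$ in one case and $\mathcal L_3$ in the other. The other three families are standard transitivity statements, and the reduction in the first paragraph is formal; the only additional point requiring care is reconciling the non-essential cyclic family with the stated range.
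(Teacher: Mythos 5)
Your proof is correct and follows essentially the same route as the paper's: reduce to the classification of \Cref{pro:Klein-class}, identify the poset of Luna stratum closures with the orbit poset $\nicefrac{\mathcal L(\mathcal A^G)}{G}$, and bound the number of orbits of rotation axes by transitivity of the polyhedral (resp. dihedral) action on vertices, edges and faces. The differences are ones of care rather than strategy, and they cut in your favor. First, the paper's proof only asserts the upper bound of three orbits and then checks in \Cref{sec:classification} that $\mathcal L_2$ and $\mathcal L_3$ occur; you carry out the orbit count family by family, including the dihedral parity split, which is what actually pins down the value of $n$ for each group. Second, and more importantly, your remark about the cyclic family is a genuine catch, not a technicality: $\mu_3(n)=\mu_2(n)\times\triv$ belongs to the list of \Cref{pro:Klein-class}, hence is a rank three linear group strictly generated in codimension two, but its arrangement is the single line $l_0$, and no subgroup of $\mu_3(n)$ fixes only the origin (every element fixes all of $l_0$); the poset of Luna stratum closures is then a two-element chain, which is isomorphic to neither $\mathcal L_2$ nor $\mathcal L_3$. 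The paper's own proof never excludes this case --- an upper bound of three orbits cannot --- so the theorem as literally stated fails for $\mu_3(n)$ and needs precisely the essentiality hypothesis you propose (equivalently, excluding the non-essential cyclic family). Your version of the statement is the one that is actually true, and your proof of it is complete.
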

\begin{proof}
If $G$ is a rank three linear group strictly generated in codimension two, then it is one of the group in the list of \Cref{pro:Klein-class}. 
Hence, $G$ is a subgroup of the group of symmetries of a regular polygon or a regular polyhedron.
Thus we have two facts:\begin{itemize}
    \item The group $G$ acts transitively on the vertices, edges, and in the case of the polyhedron, facets;
    \item Any axis of a rotation in $\mathcal{A}^G$ is an axis of symmetry, therefore associate to the regular polygon (or the regular polyhedron).
\end{itemize}
Thus, we can only have three possible orbits in $\nicefrac{\mathcal{A}^G}{G}$ and the latter is isomorphic to $\mathcal{L}_n$ for $n\leq 3$.
It is a simple check of \Cref{sec:classification} that $\mathcal{L}_2$ and $\mathcal{L}_3$ occur: see \Cref{tab:fig:arr-A_4}.
\end{proof}

The rest of the section is devoted to the study of the arrangement of subspaces of a finite linear groups in $GL_3(\mathbb{R})$. 
Moreover, we describe every inclusion in \Cref{fig:all-arrangements} case by case, compute the normal subgroups $\mathcal{R}_1 G$, $\mathcal{R}_2 G$, 
and fill \Cref{allGroupData}.
As a byproduct, we prove that $A_3=(Oct_3,T_3)$, $BC_3=\J Oct_3$, $H_3=\J Ico_3$, $I_2(n)\times\triv=(D_3(n),\mu_3(n))$, and
\begin{align*}
    I_2(n)\times A_1&=\begin{cases}\J D_3(n) &\text{if $n$ is even,}\\
                        (D_3(2n),D_3(n))    &\text{if $n$ is odd.}
                    \end{cases}
\end{align*}

We start with two useful lemmas.

\begin{lem}\label{lem:FromRotationToReflection}
Consider a non-trivial finite order element $g\in SO_3(\mathbb R)$.
Then $g$ is a rotation by some angle $\theta$ around the axis $V^{\langle g\rangle}$.
Then \begin{align*}
    V^{\langle \J g\rangle} =\begin{cases} (V^{\langle g\rangle})^\perp &\text{if }\theta=\pi,\\
                                0           &\text{otherwise.}
                            \end{cases}
\end{align*}
In particular, $\J g$ is a reflection if and only if $\theta=\pi$.
\end{lem}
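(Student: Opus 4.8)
The plan is to reduce the computation of $V^{\langle \J g\rangle}$ to an eigenspace calculation. Since a vector fixed by $\J g$ is automatically fixed by all of its powers, we have $V^{\langle \J g\rangle}=\{v\in V\mid \J g\, v=v\}$; and because $\J$ is the antipodal map, the condition $\J g\, v=v$ is equivalent to $g v=-v$. Thus $V^{\langle \J g\rangle}=\ker(g+\operatorname{Id})$ is exactly the $(-1)$-eigenspace of $g$, and the whole statement becomes a question about the spectrum of the rotation $g$.

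Next I would invoke the block form of $g$. By hypothesis (cf.\ \Cref{lemma-rotation-and-reflections}) $g$ is a rotation by $\theta$ about the axis $\ell\define V^{\langle g\rangle}$, so in an orthonormal basis adapted to the decomposition $\ell^\perp\oplus\ell$ it acts as the planar rotation by $\theta$ on $\ell^\perp$ and as the identity on $\ell$. The eigenvalues of the planar block are $e^{\pm i\theta}$ and the eigenvalue on $\ell$ is $1$. Hence $-1$ is an eigenvalue of $g$ if and only if $e^{i\theta}=-1$, i.e.\ $\theta=\pi$.

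Then I would split into the two cases. If $\theta=\pi$, the planar block equals $-\operatorname{Id}$ on $\ell^\perp$, so the $(-1)$-eigenspace of $g$ is precisely $\ell^\perp=(V^{\langle g\rangle})^\perp$; if $\theta\neq\pi$ (and $\theta\neq 0$, since $g$ is nontrivial), then $-1$ is not among the eigenvalues, so the $(-1)$-eigenspace is $\{0\}$. This yields the displayed case distinction for $V^{\langle \J g\rangle}$. For the final assertion, observe that $\J g$ has finite order (as both $\J$ and $g$ do); when $\theta=\pi$ its fixed subspace $\ell^\perp$ has codimension one, so $\J g$ is a reflection, whereas for $\theta\neq\pi$ the fixed subspace is trivial and $\J g$ cannot be a reflection. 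Conversely, a reflection has a codimension-one fixed subspace, which forces $\theta=\pi$.

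The computation is entirely elementary, so I do not expect a serious obstacle. The only points that require care are the initial reduction $V^{\langle \J g\rangle}=\ker(g+\operatorname{Id})$ (so that one studies the $(-1)$-eigenspace of $g$, rather than the fixed space of $g$ itself), and the clean identification of $\theta=\pi$ as the exact threshold at which $-1$ enters the spectrum of $g$.
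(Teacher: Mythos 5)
Your proof is correct and takes essentially the same route as the paper: both arguments rest on writing $g$ in an orthonormal basis adapted to the decomposition $V^{\langle g\rangle}\oplus (V^{\langle g\rangle})^\perp$, where it is the identity on the axis and a planar rotation by $\theta$ on the orthogonal plane, and both identify $\theta=\pi$ as exactly the case where that block contributes the eigenvalue $-1$. The only cosmetic difference is that you recast the fixed space of $\J g$ as the $(-1)$-eigenspace $\ker(g+\operatorname{Id})$ of $g$, whereas the paper writes out the matrix of $\J g$ (namely $-1$ on the axis together with rotation by $\theta+\pi$ on the plane) and reads off its fixed vectors directly.
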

\begin{proof}
Let $p$ be a non-zero vector in $V^{\langle g\rangle}$.
Let $\{e,f\}$ be an orthonormal basis of $p^\perp$.
Then we have
\begin{align*}
        &\quad\,g=\begin{bmatrix} 1 & 0         & 0\\
                    0               &\cos\theta &-\sin \theta \\
                    0               &\sin\theta &\cos\theta
        \end{bmatrix}\\
\implies&\J g=\begin{bmatrix} -1  & 0                 & 0\\
                0               &\cos(\theta+\pi)   &-\sin(\theta+\pi)\\
                0               &\sin(\theta+\pi) &\cos(\theta+\pi) 					
            \end{bmatrix}
\end{align*}
in the basis $\{p,e,f\}$.
It follows that $\J g$ fixes a nontrivial subspace if and only if $\theta=\pi$. 
Further, if $\theta=\pi$, then $V^{\langle\J g\rangle}$ is the plane spanned by $\{e,f\}$, i.e., the plane $(V^{\langle g\rangle})^\perp$.
\end{proof}

\begin{pro}\label{pro:jh-codim2}
Let $H$ be a linear subgroup of $SO_3(\mathbb R)$.
The linear group $\J H$ is generated in codimension two if and only if $H$ contains a rotation of angle $\pi$.
\end{pro}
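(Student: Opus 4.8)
The plan is to read off the codimensions of all elements of $\J H$ using \Cref{lem:FromRotationToReflection} and then finish with an index argument. First I would record the coset structure of $\J H$: since $\det \J = (-1)^3 = -1$, we have $\J \notin SO_3(\mathbb R)$, so $\J \notin H$ and $H$ has index two in $\J H$. Writing $C = \{\J g \mid g \in H\}$ for the nontrivial coset, we have $\J H = H \sqcup C$.

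Next I would classify $\operatorname{codim} V^{\langle x\rangle}$ for each $x \in \J H$. Every nontrivial $g \in H \subset SO_3(\mathbb R)$ is a rotation about an axis, so $V^{\langle g\rangle}$ is one-dimensional, of codimension two. For an element $\J g \in C$, \Cref{lem:FromRotationToReflection} shows that $\J g$ fixes a nontrivial subspace exactly when $g$ is a rotation of angle $\pi$---in which case $\J g$ is a reflection, of codimension one---and otherwise $V^{\langle \J g\rangle} = 0$, of codimension three (this includes $x = \J$ itself, the case $g = e$). Hence the elements of $\J H$ of codimension at most two are precisely the nontrivial rotations in $H$ together with the reflections $\J g$ arising from rotations $g \in H$ of angle $\pi$.

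Finally I would conclude by counting. Let $K \subseteq \J H$ be the subgroup generated by the elements of codimension at most two. By the classification $K$ contains every nontrivial element of $H$, so $H \subseteq K$; moreover $K$ meets the coset $C$ if and only if some $\J g$ has codimension at most two, i.e. if and only if $H$ contains a rotation of angle $\pi$. Since $[\J H : H] = 2$, we have $K = \J H$ exactly when $K$ contains an element outside $H$. Combining these statements, $\J H$ is generated in codimension two if and only if $H$ contains a rotation of angle $\pi$.

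The argument is short once \Cref{lem:FromRotationToReflection} is available; the one place requiring care is the converse direction, where one must check that products of codimension-$\le 2$ elements cannot escape $H$ when $H$ has no angle-$\pi$ rotation. The index-two observation settles this: absent any reflection, every generator of $K$ already lies in the subgroup $H$, whence $K \subseteq H \subsetneq \J H$ and $\J H$ is not generated in codimension two.
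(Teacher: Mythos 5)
Your proof is correct and takes essentially the same route as the paper's: both use \Cref{lem:FromRotationToReflection} to identify the codimension-$\le 2$ elements of $\J H$ (rotations in $H$, plus reflections $\J g$ coming from angle-$\pi$ rotations $g\in H$), and both use the index-two position of $H$ in $\J H$ to decide when the subgroup $K$ generated by these elements is all of $\J H$. The only difference is in packaging: where the paper's converse extracts a reflection from a word representing $\J$, you argue contrapositively that if $H$ has no angle-$\pi$ rotation then every generator of $K$ already lies in $H$, so $K\subseteq H\subsetneq\J H$ --- a slightly cleaner formulation of the same idea.
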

\begin{proof}
We know from \Cref{flgso3} that $H$ is strictly generated in codimension two, hence $H\subset\mathcal R_2\J H$.
Further, since $H$ has index two in $\J H$, and $\J\not\in\mathcal R_2\J H$, we deduce\[
    H\subset\mathcal R_2\J H\subsetneq\J H\implies\mathcal R_2\J H=H.
\]
Let $K$ be the subgroup of $\J H$ generated by the rotations and reflections in $\J H$.
Then $K=\J H$ if and only if $\J\in K$.

Suppose $H$ contains a rotation $r$ of angle $\pi$.
Then $\J r$ is a reflection, hence $\J r\in K$.
Further, since $r\in H\subset K$, we have $\J\in K$.

Conversely, suppose $\J\in K$.
Then $\J=sr$ where $r$ is a rotation, and $s$ is a product of reflections in $H$.
Suppose $s=s_1\ldots s_k$, and set $r_1=\J s_1$.
Then $\det r_1=\det(\J s_1)=1$; hence $r_1$ is a rotation in $H$, and $\J r_1=\J^2s_1=s_1$ is a reflection.
It follows from \Cref{lem:FromRotationToReflection} that $r_1$ is a rotation of angle $\pi$.
\end{proof}




\subsection{The group $\mu_3(n)$}\label{sec:mu-2-in-GL-3}
The group $G=\mu_3(n)$ is not essential. 
The subspace arrangement of $G$, in \Cref{tab:fig:arr-C_n}, is deduced from the arrangement of the essential rank $2$ group $\mu_2(n)$.
We note that $\subweyl G$ is trivial, $\mathcal R_2G=G$, and also $\mu_2(n)=\mathcal R_2 I_2(n)$, and $\mu_3(n) =\mathcal R_2(I_2(n)\times\triv)$.


\subsection{The group $\J\mu_3(n)$}\label{sec:mu-2xI-in-GL-3}
The generator $\J$ makes the representation essential and fixes the origin.
Let $l_0$ be the one-dimensional subspace on which the group acts trivially.

Suppose first that $n$ is even, so that the group $G=\J\mu_3(n)$ contains a rotation of angle $\pi$; by \Cref{lem:FromRotationToReflection}, there is a reflection in $\J\mu_3(n)$ across the plane $l_0^\perp$.
We deduce from \Cref{lem:FromRotationToReflection} that $\J\mu_3(n)=\mu_2(n)\times A_1$, $\mathcal R_1(\mu_2(n)\times A_1)=\triv[2]\times A_1$, and $\mathcal R_2(\mu_2(n)\times A_1)=\mu_3(n)$.

We now consider the case where $n$ is odd, so that there are no rotations with angle $\pi$ in $\mu_2(n)$. 
It follows from \Cref{pro:jh-codim2} that $\J\mu_3(n)$ is \emph{not} generated in codimension $2$, and that the subspace arrangement of $\J\mu_3(n)$ is made by $O<l_0< \mathbb{R}^3$, see \Cref{tab:fig:arr-C_oddxI}.
Further, $\subweyl G=\triv[3]$, and $\mathcal R_2G=\mu_3(n)$.




\subsection{The mixed group $(\mu_3(2n),\mu_3(n))$}
Suppose first that $n$ is even.
The group $G=(\mu_3(2n),\mu_3(n))$ has the same arrangements as $\J\mu_3(n)$, see \Cref{c2ncne}. 
By \Cref{def:mixedgroup}, we have
$$(\mu_3(2n),\mu_3(n)) = \mu_3(n)\sqcup\left\{\J g\mmid g\in \mu_3(2n)\backslash \mu_3(n)\right\}.$$ 
The rotation by the angle $\pi$ is not among the elements $g$ that we use for $\J g$.
It is clear from the arrangement that $G$ is \emph{not} generated in codimension two.
Indeed $\mathcal R_1G=\triv[3]$, and $\mathcal R_2G=\mu_3(n)$.

Suppose now that $n$ is odd, so that $(\mu_3(2n),\mu_3(n))$ has a reflection across the plane $l_0^\perp$. 
This group is generated in codimension two, with $\subweyl G=\triv[2]\times A_1$, and $\mathcal R_2G=\mu_3(n) $.
The arrangement $\mathcal A^G$ is given in \Cref{c2ncno}.
It is similar to the arrangement of $\J\mu_3(k)$ for even $k$. 


\subsection{The group $D_3(n)$}\label{sec:D3n-in-GL-3}
The linear group $G=D_3(n)$ is strictly generated  in codimension $2$, so that $\subweyl G=\triv[3]$, and $\mathcal R_2G=G$.

Let $l_1, l_2,\cdots,l_n$ be the lines passing through a pair of opposite vertices of a regular $2n$-gon $P$; and $l_0$ the line perpendicular to the plane containing $P$. 
There are $n$ rotations $r_1, \dots, r_n$ of angle $\pi$ around the line $l_1, \dots, l_n$. 
Further there is a rotation $r_0$ of angle $\nicefrac{\pi}{n}$ around the polar line $l_0$.
\begin{itemize}
    \item If $n$ is even, the arrangement splits into $3$ orbits: a single orbit for the \emph{polar} line $l_0$, the orbit $l_{even}=\left\{l_i\mmid i>0, i\text{ even}\right\}$, and the orbit $l_{odd}=\left\{l_i\mmid i\text{ odd}\right\}$.
    \item If $n$ is odd, the arrangement splits into $2$ orbits: a single orbit for the \emph{polar} line $l_0$, and the orbit $l=\left\{l_i\mmid i\neq 0\right\}$, and the orbit $l_{odd}=\left\{l_i\mmid i\text{ odd}\right\}$.
\end{itemize}
The arrangements are shown in \Cref{tab:fig:arr-D}.



\subsection{The group $G=\J D_3(n)$} 
When we add $\J$ to the Dihedral group, we need to distinguish again two cases.

Suppose first that $n=2k$ for some $k$. 
We want to show that $\J D_3(2k)$ is a reflection group, and precisely this is $I_2(2k)\times A_1$.

Let $l_1, l_2,\cdots,l_{2k}$ be the lines passing through a pair of opposite vertices of the $4k$-gon.
There are $2k$ rotations $r_1, \dots, r_{2k}$  by $\pi$ around the line $l_1, \dots, l_{2k}$. 
Further, there is a rotation $r_0$ by $\pi$ around the polar line $l_0$, because $n$ is even.
It follows that $\J r_i$, with $0\leq i \leq 2k$, is a reflection with respect the plane $l_i^\perp$, which contains exactly two lines from the arrangement: $l_0$ and $l_{i\pm k}$.
Thus, the group $I_2(2k)\times A_1$ is a subgroup of $\J D_3(2k)$, with the same order.

The arrangement $\mathcal A^G$ contains the plane $l_0^\perp$ and the lines $l_1,\cdots,l_{2k}$, see \Cref{tab:fig:arr-D_evenxI}.
The rotation around the line $l_0$ acts transitively on the reflection planes $l_{2k}^\perp$ with $2k\neq 0$ and similarly on $l_{odd}^\perp$. 
The group $G$ is generated in codimension $1$. Further, we have\begin{align*}
    \subweyl G      &=I_2(2k)\times A_1,\\
    \mathcal R_2(G) &=D_3(2k).
\end{align*}
Hence, we can read the one dimensional orbits directly from the arrangements of $D_3(2k)$.
The orbit arrangements of $D_3(2k)$ for $k$ odd is also shown in \Cref{tab:fig:arr-D}.
The quotient arrangement depends on the parity of $k$ as it is clear from \Cref{tab:fig:arr-D}, see \Cref{tab:fig:arr-D_evenxIorbits}. 


  
  
  

    




\begin{table}[h]
\begin{tabular}{cc}
$\nicefrac{\mathcal{A}^G}G$ &\qquad\qquad $\nicefrac{\mathcal A^G}G$\\

\begin{tikzpicture}[thick, scale=\percentage]
  \node (zero) at (0,4) {$\nicefrac 0G$};

  \node (l0) at (-2, 2.2) {$\nicefrac{l_0}{G}$};
  \node (le) at (2, 2.2) {$\nicefrac{l_{even}}{G}$};
  \node (lo) at (0, 2.2) {$\nicefrac{l_{odd}}{G}$};

  \node (l0p) at (2,0) {$\nicefrac{l_0^\perp}{G}$};
  \node (lep) at (-2,0) {$\nicefrac{l_{even}^\perp}{G}$};
  \node (lop) at (0,0) {$\nicefrac{l_{odd}^\perp}{G}$};

  \node (whole) at (0,-1.8) {$\nicefrac{\mathbb R^3}{G}$};

  \draw (zero) -- (l0);
  \draw (zero) -- (le);
  \draw (zero) -- (lo);
 
  \draw (l0) -- (lep);
  \draw (l0) -- (lop);

  \draw (le) -- (lop);
  \draw (lo) -- (lep);

  \draw (le) -- (l0p) -- (lo);

  \draw (lop) -- (whole);
  \draw (lep) -- (whole);
  \draw (l0p) -- (whole);
\end{tikzpicture} &\qquad
\begin{tikzpicture}[thick, scale=\percentage]
  \node (zero) at (0,4) {$\nicefrac 0G$};

  \node (l0) at (-2, 2.2) {$\nicefrac{l_0}{G}$};
  \node (le) at (2, 2.2) {$\nicefrac{l_{even}}{G}$};
  \node (lo) at (0, 2.2) {$\nicefrac{l_{odd}}{G}$};

  \node (l0p) at (2,0) {$\nicefrac{l_0^\perp}{G}$};
  \node (lep) at (0,0) {$\nicefrac{l_{even}^\perp}{G}$};
  \node (lop) at (-2,0) {$\nicefrac{l_{odd}^\perp}{G}$};

  \node (whole) at (0,-1.8) {$\nicefrac{\mathbb R^3}{G}$};

  \draw (zero) -- (l0);
  \draw (zero) -- (le);
  \draw (zero) -- (lo);
 
  \draw (l0) -- (lop);
  \draw (l0) -- (lep);
  \draw (le) -- (lep);
  \draw (lo) -- (lop);

  \draw (le) -- (l0p) -- (lo);

  \draw (lop) -- (whole);
  \draw (lep) -- (whole);
  \draw (l0p) -- (whole);
\end{tikzpicture}
\end{tabular}
\caption{The arrangement quotients of $G=\J D_{2k}$ for $k$ odd and $k$ even respectively.}
\label{tab:fig:arr-D_evenxIorbits}
\end{table}

Let now consider the linear group $G=\J D_3(n)$, where $n=2k+1$ for some $k$.
Let $l_1,\cdots,l_n$ be the lines determined by the vertices of a regular $n$-gon and the origin; let $l_0$ denote the polar line.
For $1\leq i\leq n$, we denote by $r_i$ the rotation of angle $\pi$ around the axis $l_i$.
Then $\J r_i$ is a reflection across the plane $l_i^\perp$.
The polar line $l_0$ is contained in $l_i^\perp$. We note that $l_i^\perp$ does not contain $l_j$ for any $j$, because $n$ is odd.
The subspace arrangement is shown in \Cref{tab:fig:arr-D_oddxI}.
This group is generated in codimension $2$, with \begin{align*}
    \subweyl G      &=I_2(n)\times\triv.\\
    \mathcal R_2G   &=D_3(n).
\end{align*}

  
  
  
  
  
  
  
 
  

\subsection{The mixed group $(D_3(n),\mu_3(n) )$}\label{sec:mixed-d3n-mu3n}
By \Cref{def:mixedgroup}, 
\[
(D_3(n),\mu_3(n) ) = \mu_3(n) \sqcup\left\{\J g\mmid g\in D_3(n)\backslash \mu_3(n) \right\}.
\]
Observe that this group equals $I_2(n)\times\triv$. 
It follows that $\subweyl G=G$ and $\mathcal R_2G=\mu_3(n)$.

Let $\pi_0$ be the plane on which $I_2(n)$ acts, and let $P$ be the regular $2n$-polygon in $\pi_0$. 
For $1\leq i\leq n$, let $\pi_i$ be the plane containing $l_0=\pi_0^\perp$, and the $i^{th}$ vertex of $P$.
The subspace arrangement $\mathcal A^G$ contains the line $l_0$ and the $n$ planes $\pi_1,\ldots,\pi_n$, see \Cref{D_nC_nx1}.



\subsection{The mixed group $(D_3({2n}),D_3(n))$} 
Suppose first that $n$ is odd. 
We show that in this case $(D_3({2n}),D_3(n))=I_2(n)\times A_1$.
Since $n$ is odd, the rotation $r_0$ with angle $\pi$ and axis $l_0$ belongs to $D_3(2n)\backslash D_3(n)$.
Hence the reflection $\J r_0$ belongs to $G$, i.e., $\triv[2]\times A_1\subset (D_3({2n}),D_3(n))$.
Let $l_1,\cdots, l_{2n}$ be the lines in the arrangement of $\J D_3(2n)$, and $r_i$ the rotation of angle $\pi$ with axis $l_i$.
For $i$ even, $\J r_i$ is a reflection across $l_i^\perp=\langle l_0,l_{i+n}\rangle$.
Hence the subgroup generated by the $l_i^\perp$ for $i$ even is precisely $I_2(n)\times\triv$.
It follows by comparing order that $(D_3({2n}),D_3(n))=I_2(n)\times A_1$.
The lattice of intersection of the related subspace arrangement is computed in \Cref{table:D2nDn-n-even}.
We leave to the reader to check that the quotient arrangement is the one described in \Cref{table:D2nDn-n-even}; we just remark that $l_i^\perp=\langle l_0,l_{i+n}\rangle$.

When $n$ is even, $r_0$ does not belong in $D_3({2n})\backslash D_3(n)$.
Moreover, the reflecting hyperplane of any reflection in $\left\{\J g\mmid g\in D_3({2n})\backslash D_3(n)\right\}$ contains only the line $l_0$ in the arrangement of $D_3({2n})\backslash D_3(n)$.

Hence, the group $(D_3({2n}),D_3(n))$ is generated in codimension $2$, and we have $\subweyl G=I_2(n)\times\triv$, and $\mathcal R_2G=D_3(n)$.

This argument, along with the observation that the lines $l_{2j}$ in the arrangement of $D_3(n)$ are missing, explain the poset of the quotient arrangement. 



\subsection{The group $T_3$}\label{sec:T_3}
The group $T_3$ is the group generated by the rotational symmetries of the regular tetrahedron $P$ in $\mathbb R^3$.
Let $v_1, v_2, v_3, v_4$ be the lines containing the origin and a vertex of $P$; and let $e_1, e_2, e_3$ be the lines passing through the midpoints of two opposite edges of $P$. 
%
%
The arrangements of the linear group $T_3$ are shown in \Cref{tab:fig:arr-A_4}.
Since $G$ is generated by rotations, we have $\subweyl G=\triv[3]$, and $\mathcal R_2G=G$.


\subsection{The group $\J T_3$}
The rotations $r_i$ in $T_3$ with axes $e_i$ has angle $\pi$, hence $\J r_i$ is a reflection.
The arrangements of $\J T_3$ is shown in \Cref{tab:fig:arr-A_4xI} and it contains the arrangement of $T_3$.
The quotient arrangement is obtained again by the one of $T_3$ and by the observation that all planes $e_i^\perp$ are in the same orbit, because of rotation around a line containing a vertex of the tetrahedron sends a reflection plane to another one.
We also note that $\subweyl{\J T_3}=A_1\times A_1\times A_1$, and $\mathcal R_2G=T_3$.


\subsection{The group $Oct_3$}
The linear group $Oct_3$ is generated strictly in codimension $2$.
In particular, $\subweyl G=\triv[3]$, and $\mathcal R_2G=G$.
It is the group of rigid symmetries of the octahedron, i.e., dual polytopes of the cube.
Its subspace arrangement and the respective quotient arrangement are in \Cref{table:oct-3}.


\subsection{The group $\J Oct_3$}
The group $Oct_3$ has six order-two rotations with axes $f_1, \dots, f_6$, and three order two rotations with axes $e_1,e_2,e_3$ respectively.
It follows that $\J Oct_3$ contains nine reflections, across the planes $e_i^\perp$ and $f_i^\perp$.
The subspace arrangement of $\J Oct_3$ is in \Cref{tab:fig:arr-S_4xI}.
By definition, see \Cref{lem:bou-ref}, $BC_3$ is the group of symmetry of a cube, so we observe that $\J Oct_3=BC_3$, $\subweyl G=G$, and $\mathcal R_2G=Oct_3$.

\subsection{The Mixed Group $(Oct_3, T_3)$}
We are going to show that the linear group $(Oct_3, T_3)$ is precisely $A_3$, the symmetry group of the regular tetrahedron $P$. 
Indeed, $T_3\subset (Oct_3,T_3)$, and further $(Oct_3,T_3)$ preserves $P$; by counting elements, we deduce that $(Oct_3,T_3)$ is the symmetry group of $P$.
In particular, we have $\subweyl G=G$ and $\mathcal R_2G=T_3$.

Now, by \Cref{def:mixedgroup}, the group $(Oct_3, T_3)$ contains the elements $\J g$ where $g\in Oct_3\backslash T_3$; by \Cref{lem:FromRotationToReflection} this $\J g$ is a reflection if and only if $g$ is a rotation by an angle $\pi$; these rotations are precisely the six rotations along the lines passing through middle points of edges.
Hence, the arrangement of $(Oct_3, T_3)$, see \Cref{tab:fig:a3}, is the subset of $\mathcal A^{\J Oct_3}$ spanned by $e_i$, $v_i$ and $f_i^\perp$.

%
The quotient arrangement of  $(Oct_3, T_3)$ is a sub-arrangement of the quotient arrangement of $\J Oct_3$.

\subsection{The group $Ico_3$}
The construction of the arrangements for $Ico_3$ is very similar to the one of $Oct_3$ and $T_3$.
These arrangements are shown in \Cref{tab:fig:arr-A_5}.
Here we shortly explain how to get these arrangements, because it will be useful for the next group, $\J Ico_3$.

There are three types of rotations in the
symmetries of the icosahedron. 
The first one are around the lines through opposite vertices $v_1, \dots, v_6$; then, we consider the rotations around the axises through two opposite edges $e_1, \dots, e_{15}$; and finally the rotations around lines through central points of opposite facets $f_1, \dots, f_{10}$.
Hence all those lines are in the arrangements $\mathcal{A}^{Ico_3}$.

To construct the quotient arrangement, we note that there are only three orbits of lines, the $f$-orbits, the $e$-orbits and the $v$-orbits. Indeed, there are three $Ico_3$-orbits in the icosahedron corresponding to vertices, edges, and facets.
Finally this is a group strictly generated in codimension two: $\subweyl G=\triv[3]$, and $\mathcal R_2G=G$.


\subsection{The group $H_3=\J Ico_3$} 
We are going to show that $\J Ico_3$ is the full icosahedral group; in our notation this is identified with the Coxeter group $H_3$, hence strictly generated in codimension $1$.  

As we said above, there are three types of rotations in the icosahedral group $Ico_3$: $v_i$, $e_i$ and $f_i$. (In the previous subsection they are described in details.) 
Only the rotations $e_1, \dots, e_{15}$ have order $2$. So, $\J e_1, \dots, \J e_{15}$ are reflection with respect the planes $\pi_1=e_1^{\perp}, \dots, \pi_{15}^{\perp}$. 
This description, and an order check, already shows that $\J Ico_3$ is the Coxeter group $H_3$ with $\subweyl H_3=H_3$, and $\mathcal R_2 H_3=Ico_3$.

Nevertheless, we want to convince the reader that the two subspace (actually hyperplane) arrangements coincide. The presentation of the hyperplane of $H_3$ is, for instance, in Exercise 26 of Section 7.3 of \cite{GordonMcNulty-Matroids:AGeometric}.

Let us pick a rotation $e_i$: this means picking four vertices among the $12$ of the icosahedron. (Those vertices identify two opposed edges.)
Consider the induced simplicial complex on the remaining eight vertices: This is made of four antipodal $2$-simplexes and two antipodal edges $a$ and $b$. 
We are going to show that each $e_i^\perp$ contains six lines. 
%
%
Indeed, the plane $\pi_i=e_i^\perp$ contains these edges, $a$ and $b$; so it contains the two lines through pairs of antipodal vertices, and lines through the middle points of $a$ and $b$). 
%
The plane $\pi_i$ also contains lines through the middle points of the common edges of the two antipodal $2$-simplexes; in addition, the lines through the pairs of opposite facets of the latter $2$-simples also belongs to $\pi_i$.
These arrangements are shown in \Cref{tab:fig:arr-A_5xI}.

\section{Cohomological computations}
In this section, we compute the singular reduced cohomology of $U_{\mathcal A^G}$, the open complement of the arrangement $\mathcal{A}^G$ in $\mathbb{R}^n$. 
Firstly, we recall the Goresky -- MacPherson formula and we set some useful notations; later in \Cref{sec:cohomology-computations} we show our results 

\subsection{Some Notations and Results from the Literature}
In 1988, Goresky and MacPherson \cite{GoreskyMacPhersonSubspaces} gave a formula to express the group cohomology of the open complement of a real subspace arrangement.

Given a real central subspace arrangement $\mathcal{A}$, we denote by $\mathcal{L}(\mathcal{A})$ its lattice of intersection. 
We set $U_\mathcal{A}\define  \mathbb{R}^n \setminus \cup_{x\in \mathcal{L}(\mathcal{A})} x$, that is the open complement of the arrangement in $\mathbb{R}^n$.
For every, $x\in \mathcal{L}(\mathcal{A})$, 
the interval $[\hat{0}, x]$ is the subposet of $\mathcal{L}(\mathcal{A})$ made by the elements $y\in \mathcal{L}(\mathcal{A})$ such that $\hat{0}\leq y \leq x$.
We are going to denote $\Delta_x\define \roc(\hat{0}, x)$, the (reduced) order complex, see \Cref{sec:posets}.
We mainly care about the lattice of intersection associated to the arrangement $\mathcal{A}^G$ and we are going to show that the simplicial complex 
$$\Delta_G\define \roc (\hat{0}, \hat{1})=\Delta\mathcal{L}(\mathcal{A})$$ 
plays a crucial role. 
For simplicity, we write $U_G$ instead of $U_{\mathcal{A}^G}$.

With an abuse of notation, we denote the rational singular reduced cohomology and the rational simplicial reduced homology by $\operatorname{H}^*(U_\mathcal{A})$ and $\operatorname{H}_{*}(\Delta(\hat{0}, x))$.
We write $\operatorname{h}^*(U_\mathcal{A})$ and $\operatorname{h}_{*}(\Delta(\hat{0}, x))$ for their dimensions as $\mathbb{Q}$-vector space.

We are going to compute the singular reduced cohomology of $U_\mathcal{A}$ via the simplicial reduced homologies of the order complex of the interval $[\hat{0}, x]$ in the lattice $\mathcal{L}(\mathcal{A})$. 

\begin{Theo}(Goresky--MacPherson formula)
	Let $\mathcal{A}$ be a subspace arrangement in $\mathbb{R}^n$ and $U_\mathcal{A}$ be its open complement in $\mathbb{R}^n$. Then 
	\begin{equation*}
		\operatorname{H}^i(U_\mathcal{A})\cong 
        \bigoplus_{\hat{0}\neq x\in \mathcal{L}(\mathcal{A})} \operatorname{H}_{\operatorname{codim}(x)-2-i}\Delta_x.
	\end{equation*}
\end{Theo}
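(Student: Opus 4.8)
The statement is the Goresky--MacPherson formula \cite{GoreskyMacPhersonSubspaces}, so the honest route is simply to cite it; but were I to reconstruct a proof, the plan is to combine Alexander duality with a combinatorial model for the link of the arrangement. Write $Z=\bigcup_{x\in\mathcal A}x$ for the union of the subspaces, so that $U_\mathcal A=\mathbb R^n\setminus Z$, and let $L=Z\cap S^{n-1}$ be its trace on the unit sphere. I would reduce the computation of $\operatorname H^*(U_\mathcal A)$ first to the homology of $Z$, then to that of $L$, and finally to the order complexes $\Delta_x$.

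First I would one-point compactify. Viewing $\mathbb R^n\subset S^n$, the complement $U_\mathcal A$ equals $S^n\setminus Z^{+}$, where $Z^{+}=Z\cup\{\infty\}$ is the one-point compactification of the closed set $Z$. Since every subspace passes through the origin, $Z$ is invariant under positive scaling, hence is the (infinite) cone over $L$; its one-point compactification is therefore the unreduced suspension $Z^{+}\simeq\Sigma L$. Alexander duality in $S^n$ (rational coefficients, all (co)homology reduced) then yields
\[
    \operatorname H^{i}(U_\mathcal A)\;\cong\;\operatorname H_{n-i-1}(Z^{+})\;\cong\;\operatorname H_{n-i-1}(\Sigma L)\;\cong\;\operatorname H_{n-i-2}(L).
\]

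The remaining, and genuinely substantive, step is to compute $\operatorname H_*(L)$. Here I would invoke the Ziegler--\v{Z}ivaljevi\'c homotopy decomposition of a subspace-arrangement link, which realizes $L$ as a wedge indexed by the intersection lattice,
\[
    L\;\simeq\;\bigvee_{\hat 0\neq x\in\mathcal L(\mathcal A)}\Delta_x * S^{\dim x-1},
\]
where $*$ denotes the topological join and $S^{\dim x-1}=x\cap S^{n-1}$. Using the join identity $\operatorname H_k(A*S^{d-1})\cong\operatorname H_{k-d}(A)$ together with $\operatorname{codim}(x)=n-\dim x$, the wedge contributes
\[
    \operatorname H_{n-i-2}(L)\;\cong\;\bigoplus_{\hat 0\neq x}\operatorname H_{\,(n-i-2)-\dim x}(\Delta_x)\;=\;\bigoplus_{\hat 0\neq x}\operatorname H_{\operatorname{codim}(x)-2-i}(\Delta_x),
\]
which is exactly the asserted formula.

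The main obstacle is precisely this wedge decomposition of $L$: it is not formal from duality and requires real work, either a Mayer--Vietoris / nerve argument over the intersection poset (exhibiting $L$ as the homotopy colimit of the diagram $x\mapsto x\cap S^{n-1}$ and identifying its homotopy type), or, as in the original treatment of Goresky and MacPherson, an application of stratified Morse theory to a function measuring distance to $Z$. Once that decomposition is in hand, the only thing left is the degree bookkeeping displayed above, which is routine.
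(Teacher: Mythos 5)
The paper never proves this statement at all: it is imported directly from the literature (the theorem is recalled with the sentence ``In 1988, Goresky and MacPherson gave a formula\ldots'' and used as a black box), so your first instinct --- that the honest route is simply to cite \cite{GoreskyMacPhersonSubspaces} --- is exactly what the authors do. For what it is worth, your reconstruction sketch is also sound: compactifying the cone $Z$ to the suspension of its link $L$, applying Alexander duality in $S^n$, invoking the Ziegler--\v{Z}ivaljevi\'{c} wedge decomposition
\[
L\simeq\bigvee_{\hat 0\neq x\in\mathcal L(\mathcal A)}\Delta_x * S^{\dim x-1},
\]
and then shifting degrees via the join-with-a-sphere isomorphism gives precisely the displayed formula, and you correctly isolate the wedge decomposition as the one genuinely non-formal ingredient (the original Goresky--MacPherson argument replaces it by stratified Morse theory).
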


We rewrite the previous formula in a more convenient way; the set of intersections in a specific codimension $j$ is $\mathcal{L}_j(\mathcal{A})$:

\begin{equation}\label{eq:Goresky-MacPherson-formula}
	\operatorname{H}^i(U_\mathcal{A})\cong 
        \bigoplus_{j\neq 0}^n \bigoplus_{x\in \mathcal{L}_j(\mathcal{A})} \operatorname{H}_{j-2-i}\Delta_x,
	\end{equation}
Note that the sum is over $j > 0$ because the is always a unique minimal element $\hat{0}$ corresponding to the intersection $\mathbb{R}^n$. We also remark that $\mathcal{L}_j(\mathcal{A^G})=\mathcal{A}^G_j$.

Moreover, if the arrangement is central, there is also the unique maximal element $\hat{1}$ corresponding to the origin $O$.
In the above direct sum, the contribution for $j=n$ is $\operatorname{H}_{n-2-i}\Delta_{\hat{1}}$. So we observe that $\operatorname{H}^n(U_\mathcal{A})\cong \operatorname{H}_{-2}\Delta_{\hat{1}}=0$.

We say that a subspace arrangement $\mathcal{A}$ is {\em non-trivial} if there is at least an intersection different from $\hat{0}$ and $\hat{1}$, i.e., the lattice of intersection $\mathcal{L}(\mathcal{A})$ is different from $\{\hat{0}<\hat{1}\}$. In other words, $\roc (\hat{0}, \hat{1}))\neq \emptyset$.

\begin{lem}
	A central arrangement of subspaces $\mathcal{A}$ is trivial if and only if $\operatorname{H}^{n-1}(U_\mathcal{A})\neq 0$.
\end{lem}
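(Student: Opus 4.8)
The plan is to read $\operatorname{H}^{n-1}(U_\mathcal{A})$ straight off the Goresky--MacPherson formula \eqref{eq:Goresky-MacPherson-formula} and to isolate the single summand that can survive in this degree. Setting $i=n-1$ in \eqref{eq:Goresky-MacPherson-formula} gives
\[
    \operatorname{H}^{n-1}(U_\mathcal{A}) \cong \bigoplus_{j=1}^{n}\ \bigoplus_{x\in\mathcal{L}_j(\mathcal{A})} \operatorname{H}_{j-n-1}\Delta_x .
\]
First I would do the degree bookkeeping: since $\Delta_x$ is a simplicial complex, its reduced homology $\operatorname{H}_k\Delta_x$ vanishes for $k<-1$, and $j=\operatorname{codim}(x)\le n$ forces the index $j-n-1$ to be $\le -1$, with equality exactly when $j=n$. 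Hence the only intersections that can contribute are those of codimension $n$, i.e. the origin $\hat 1=O$ (here the arrangement is essential, $\dim\hat 1=0$, so $O$ is the unique such element), and they contribute only through $\operatorname{H}_{-1}$. This collapses the formula to
\[
    \operatorname{H}^{n-1}(U_\mathcal{A}) \cong \operatorname{H}_{-1}\Delta_{\hat 1} = \operatorname{H}_{-1}\roc(\hat 0,\hat 1).
\]

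Next I would translate the vanishing of this single group into the combinatorics of $\mathcal{L}(\mathcal{A})$. The relevant convention is that for a simplicial complex $K$ one has $\operatorname{H}_{-1}(K)\cong\mathbb{Q}$ precisely when $K$ is the empty complex $\{\emptyset\}$ (no vertices), and $\operatorname{H}_{-1}(K)=0$ as soon as $K$ has at least one vertex. Now $\roc(\hat 0,\hat 1)=\Delta(\mathcal{L}(\mathcal{A})\setminus\{\hat 0,\hat 1\})$ has exactly one vertex for each intersection strictly between $\hat 0$ and $\hat 1$. Therefore $\operatorname{H}_{-1}\roc(\hat 0,\hat 1)\neq 0$ if and only if the open interval $(\hat 0,\hat 1)$ is empty, i.e. $\mathcal{L}(\mathcal{A})=\{\hat 0<\hat 1\}$, which is exactly the definition of $\mathcal{A}$ being trivial. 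Combining this with the displayed isomorphism yields both implications simultaneously.

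I would present the degree bookkeeping as the first step, the $\operatorname{H}_{-1}$ convention as the second, and the equivalence as the conclusion. The only genuinely delicate point is the second step: one must fix and use the convention that the reduced homology of the empty order complex is $\mathbb{Q}$ concentrated in degree $-1$, so that the nonvanishing of $\operatorname{H}^{n-1}(U_\mathcal{A})$ is correctly detected by the emptiness of $\roc(\hat 0,\hat 1)$. I would also flag explicitly that the collapse to the single term uses $\hat 1=O$; this is the essential case, which is the setting in which the lemma is applied (a non-essential factor $\triv[d]$ splits off and would otherwise kill $\operatorname{H}^{n-1}$ on dimensional grounds).
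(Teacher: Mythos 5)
Your proof is correct and takes essentially the same route as the paper: specialize the Goresky--MacPherson formula to $i=n-1$, observe that only the codimension-$n$ element $\hat 1$ can contribute (through $\operatorname{H}_{-1}$), and conclude via the convention that $\operatorname{H}_{-1}$ of the order complex is non-zero exactly when that complex is empty, i.e.\ exactly when $\mathcal{L}(\mathcal{A})=\{\hat 0<\hat 1\}$. Your write-up is in fact more careful than the paper's: the paper's one-line proof states the criterion with the parity reversed (``non-zero if and only if \dots not empty,'' an evident slip), and it leaves implicit the essentiality of $\mathcal{A}$ (needed so that $\operatorname{codim}\hat 1=n$; otherwise a trivial non-essential arrangement such as a single line in $\mathbb{R}^3$ has $\operatorname{H}^{n-1}(U_{\mathcal{A}})=0$), both points that you fix or flag explicitly.
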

\begin{proof}
	By equation (\ref{eq:Goresky-MacPherson-formula}), we note that $\operatorname{H}^{n-1}(U_\mathcal{A})\cong\operatorname{H}_{-1}(\roc(\hat{0}, \hat{1}))$. 
    This is non-zero if and only if the order complex $\roc(\hat{0}, \hat{1})$ is not empty.
\end{proof}

\begin{lem}\label{lem-atom-contribution}
	Let $\mathcal{A}$ be a non-trivial central arrangement of subspaces. If $a\in \mathcal{L}(\mathcal{A})$, then $\operatorname{H}_{-1} \Delta_a$ is non zero if and only if $a$ is an atom of the lattice of intersection $\mathcal{L}(\mathcal{A})$.
\end{lem}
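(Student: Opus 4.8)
The plan is to unwind the definition of $\Delta_a$ and then invoke the standard convention for reduced homology in degree $-1$. By definition $\Delta_a = \roc(\hat 0, a)$ is the order complex of the open interval $(\hat 0, a) = \{y\in\mathcal L(\mathcal A)\mid \hat 0 < y < a\}$, so its vertex set is exactly this open interval and its faces are the chains contained in it. The first step is to recall that, for any abstract simplicial complex $\Delta$, one has $\operatorname{H}_{-1}\Delta\cong\mathbb Q$ when $\Delta$ is the complex consisting of the empty face alone (no vertices), while $\operatorname{H}_{-1}\Delta = 0$ as soon as $\Delta$ has at least one vertex. This is immediate from the augmented (reduced) chain complex $\cdots\to C_0\to C_{-1}\to 0$ with $C_{-1}=\mathbb Q$: the augmentation $C_0\to C_{-1}$ sends each vertex to the generator and is therefore surjective precisely when a vertex exists, so its cokernel $\operatorname{H}_{-1}\Delta$ is nonzero if and only if $C_0 = 0$.

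Applying this to $\Delta = \Delta_a$, whose vertex set is the open interval $(\hat 0, a)$, I obtain that $\operatorname{H}_{-1}\Delta_a\neq 0$ if and only if $(\hat 0, a)$ is empty as a poset. It then remains to translate the emptiness of $(\hat 0, a)$ into a covering relation. Since $\hat 0$ is the global minimum of $\mathcal L(\mathcal A)$, there is no $y$ with $\hat 0 < y < a$ exactly when $a$ covers $\hat 0$, which is precisely the definition of an atom. Here $a\neq\hat 0$, as in the Goresky--MacPherson sum \eqref{eq:Goresky-MacPherson-formula}; note also that an atom is by definition strictly above $\hat 0$, so the covering condition and the atom condition coincide. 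Chaining these equivalences, namely ``$a$ is an atom $\iff$ $(\hat 0,a)$ is empty $\iff$ $\operatorname{H}_{-1}\Delta_a\neq 0$'', yields both implications of the lemma simultaneously.

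Since the whole argument is essentially a convention check, I do not expect a serious obstacle. The only point requiring genuine care is fixing the sign/indexing convention so that the empty-vertex complex $\Delta(\emptyset)$ carries nontrivial $\operatorname{H}_{-1}$, and, correspondingly, excluding the degenerate case $a=\hat 0$ (for which $(\hat 0,\hat 0)$ is empty yet $\hat 0$ is not an atom); this case does not arise in the application, where the sum ranges over $a\neq\hat 0$. Once the convention is pinned down, the equivalence is automatic and the non-triviality hypothesis on $\mathcal A$ is only used to guarantee that the ambient lattice $\mathcal L(\mathcal A)$ is the one appearing in the formula.
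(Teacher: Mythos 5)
Your proof is correct and follows the same route as the paper, which disposes of the lemma in one line by observing that $\operatorname{h}_{-1}\Delta_a=1$ iff $\Delta_a$ is empty iff $a$ is an atom; you simply spell out the augmentation-map argument and the translation of ``empty open interval'' into the covering relation. Your explicit exclusion of the degenerate case $a=\hat 0$ is a reasonable extra precaution that the paper leaves implicit.
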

\begin{proof}
	We observe that $\operatorname{h}_{-1}\Delta_a=1$ if and only if $\Delta_a$ is empty if and only if $a$ is an atom for $\mathcal{L}(\mathcal{A})$.
\end{proof}

\subsection{The Cohomology of the Principal Stratum}\label{sec:cohomology-computations}

We assume that our arrangement is non-trivial. 
In this case the maximal cohomology could be in degree $(n-2)$.
Using equation (\ref{eq:Goresky-MacPherson-formula}), one gets:
\begin{equation}\label{eq-hom-n-2}
\operatorname{H}^{n-2}(U_\mathcal{A})\cong 
        \operatorname{H}_{0}\Delta_{\hat{1}} \bigoplus_{l} \operatorname{H}_{-1}\Delta_l	
\end{equation}
where the sum is over the lines $l$ in the arrangement.


\begin{pro}\label{Pro-h_n-2-bound}
Let $N$ be the number of atom lines in the arrangements $\mathcal{A}$.
Then $\operatorname{h}^{n-2}(U_{\mathcal{A}})\geq 2N-1$. 
Further, the inequality is strict, i.e., $\operatorname{h}^{n-2}(U_{\mathcal{A}})= 2N-1$ if and only if $\mathcal{A}$ contains precisely $N$ lines.
\end{pro}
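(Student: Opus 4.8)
The plan is to compute $\operatorname{h}^{n-2}(U_{\mathcal A})$ from the splitting \eqref{eq-hom-n-2} and to analyze its two summands separately. That formula gives
\[
    \operatorname{H}^{n-2}(U_{\mathcal A})\cong \operatorname{H}_0\Delta_{\hat 1}\bigoplus_{l}\operatorname{H}_{-1}\Delta_l,
\]
where $l$ ranges over the lines of $\mathcal A$ (the codimension $n-1$ elements of $\mathcal L(\mathcal A)$). By \Cref{lem-atom-contribution}, $\operatorname{h}_{-1}\Delta_l=1$ when $l$ is an atom and $\operatorname{h}_{-1}\Delta_l=0$ otherwise; hence the second summand has dimension exactly $N$, the number of atom lines. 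It therefore remains to bound $\operatorname{h}_0\Delta_{\hat 1}$ below by $N-1$ and to determine when equality holds.

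The key observation I would establish is that every atom line is an isolated vertex of the order complex $\Delta_{\hat 1}=\widehat{\Delta}\bigl(\mathcal L(\mathcal A)\setminus\{\hat 0,\hat 1\}\bigr)$. Indeed, an atom line $l$ is a maximal proper subspace, so no element of $\mathcal L(\mathcal A)$ lies strictly between $\hat 0=V$ and $l$; and since $l$ is one-dimensional, the only element strictly above $l$ in the reverse-inclusion order is $\hat 1=O$. Thus $l$ is simultaneously an atom and a coatom, hence comparable to no other element of the proper part $\mathcal L(\mathcal A)\setminus\{\hat 0,\hat 1\}$, and it forms an isolated $0$-simplex of $\Delta_{\hat 1}$. (The existence of such a line forces $\hat 1=O$, so the arrangement is essential in the relevant range.)

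Consequently the $N$ atom lines yield $N$ distinct connected components of $\Delta_{\hat 1}$, and any further proper subspace (a plane, or a line contained in one) contributes vertices lying in components disjoint from these isolated points. Writing $c$ for the number of connected components of $\Delta_{\hat 1}$, we get $c=N+c'$ with $c'\geq 0$ counting the components formed by the remaining vertices, so the reduced Betti number satisfies $\operatorname{h}_0\Delta_{\hat 1}=c-1\geq N-1$. Combining this with the first paragraph,
\[
    \operatorname{h}^{n-2}(U_{\mathcal A})=\operatorname{h}_0\Delta_{\hat 1}+N\geq (N-1)+N=2N-1.
\]
Equality holds precisely when $c'=0$, that is, when $\mathcal L(\mathcal A)\setminus\{\hat 0,\hat 1\}$ has no vertices beyond the $N$ atom lines; equivalently, $\mathcal A$ consists of exactly these $N$ lines and no other subspace, which is the asserted characterization.

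The step I expect to be the main obstacle is the isolation argument of the second paragraph: one must check carefully that an atom line is both an atom and a coatom, so that it is a genuinely separate connected component, and that introducing any higher subspace can only add new vertices without ever merging the isolated line-points. A small amount of additional care is needed for the degenerate cases, such as a non-essential arrangement or $N=0$, where the statement is either vacuous or reduces to the trivial bound $\operatorname{h}^{n-2}\geq 0$.
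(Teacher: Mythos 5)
Your proof is correct and takes essentially the same route as the paper's: the same splitting from equation \eqref{eq-hom-n-2}, the same use of \Cref{lem-atom-contribution} to get the summand of dimension $N$, the same observation that each atom line is an isolated vertex of $\Delta_{\hat 1}$ (hence $\operatorname{h}_0\Delta_{\hat 1}\geq N-1$), and the same component count for the equality case. If anything, your converse direction (equality forces the proper part of $\mathcal L(\mathcal A)$ to have no vertices beyond the $N$ atom lines) is spelled out slightly more explicitly than the paper's one-line remark.
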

\begin{proof}
We know from equation \ref{eq-hom-n-2}, that only $\operatorname{H}_{0}\Delta_{\hat{1}}$ and $\operatorname{H}_{-1}\Delta_l$ contribute to the $(n-2)$-cohomology. 
By \Cref{lem-atom-contribution}, each $\operatorname{H}_{-1}\Delta_l$ is non-zero (and one dimesional) if and only if $l$ is an atom. On the other hand, $\operatorname{H}_{0}\Delta_{\hat{1}}$ could be made by several disconnected components; among those, there is a singleton vertex $\{l\}$ for each atom line $l$. They contribute by an $(N-1)$-dimensional vector space in $\operatorname{H}_{0}\Delta_{\hat{1}}$.
Thus, $\operatorname{h}^{n-2}(U_{\mathcal{A}})= \operatorname{h}_{0}\Delta_{\hat{1}}+ N \geq 2N-1$.

Assume now that $\mathcal{A}$ is only made by lines, so every line is an atom for the lattice of intersection of $\mathcal{A}$. 
Moreover, $\Delta_{\hat{1}}$ has $N$ vertices. 
So  $\operatorname{h}^{n-2}(U_{\mathcal{A}})=2N-1$.
Vice versa, assume $\operatorname{h}^{n-2}(U_{\mathcal{A}})=2N-1$, then $\operatorname{h}^{0}\Delta_{\hat{1}}=N-1$; note that $\Delta_{\hat{1}}$ contains only $N$ vertices.   
\end{proof}

\begin{TheoM}\label{Thm-strictly-two-inGL3}
Let $G$ be a finite linear group in $GL_3(\mathbb{R})$.
The group $G$ is strictly generated in codimension two if and only if the cohomology of $U_G$ is concentrated in degree $1$, and $\operatorname{h}^{1}(U_G)=2N-1$, where $N$ is the number of lines in the arrangements.
\end{TheoM}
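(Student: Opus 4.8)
The plan is to specialise the Goresky--MacPherson formula \eqref{eq:Goresky-MacPherson-formula} to $n=3$ and to translate both hypotheses into one geometric statement: that $\mathcal A^G$ is a \emph{pure line arrangement}, i.e. $\mathcal A^G_1=\varnothing$ and every member of $\mathcal A^G$ other than the apex $\hat1$ is a line. After reducing to $G\subset O_3(\mathbb R)$ via \Cref{lem:equivariant}, one records that with $n=3$ only $\operatorname H^0,\operatorname H^1,\operatorname H^2$ can be non-zero, and that $\operatorname H^1=\operatorname H^{n-2}$ is exactly the top degree controlled by \Cref{Pro-h_n-2-bound}. I would then prove the two implications separately.

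For the forward direction, assume $G$ is strictly generated in codimension two. By \Cref{pro:no-cod-one} the arrangement has no planes, and by \Cref{thm-L_n} its intersection lattice is $\mathcal L_n$; hence $\mathcal A^G$ consists of its $N$ atom lines together with $\hat1$. \Cref{Pro-h_n-2-bound} then yields $\operatorname h^1(U_G)=2N-1$. Concentration is read off \eqref{eq:Goresky-MacPherson-formula}: the absence of planes removes the only possible contributions to $\operatorname H^0$, since a codimension-one atom $\pi$ would contribute $\operatorname H_{-1}\Delta_\pi=\operatorname H_{-1}(\varnothing)=\mathbb Q$ by \Cref{lem-atom-contribution}; and $\operatorname H^2=\operatorname H^{n-1}$ vanishes because $\Delta_{\hat1}$ is non-empty once $\mathcal A^G$ has at least two lines (the single-axis case $\mu_3(n)$ being non-essential and disposed of by noting the apex has codimension $<3$).

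For the converse, assume $\operatorname H^*(U_G)$ is concentrated in degree one with $\operatorname h^1(U_G)=2N-1$. The vanishing of $\operatorname H^0(U_G)$ forces $\mathcal A^G_1=\varnothing$: each plane is an atom contributing a summand $\mathbb Q$ to $\operatorname H^0$ by \Cref{lem-atom-contribution}, so a single plane already obstructs concentration. No planes means $G$ has no reflections, whence $\mathcal R_1G=\{e\}$, and $N\ge 1$ because $\operatorname h^1=2N-1\ge 0$. It then remains to upgrade ``$\mathcal R_1G=\{e\}$'' to ``$G\subset SO_3(\mathbb R)$'', after which \Cref{flgso3} finishes the argument.

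This last step is the main obstacle, and it is genuinely delicate. Having no reflection hyperplanes does \emph{not} by itself force $G\subset SO_3(\mathbb R)$: a group containing the antipodal map $\J=-\mathrm{Id}$ (whose fixed space is trivial) avoids reflection planes while lying outside $SO_3(\mathbb R)$, and for instance $\J\mu_3(n)$ with $n$ odd, or $(\mu_3(2n),\mu_3(n))$ with $n$ even, produces a single-axis arrangement with $U_G\simeq S^1$ sharing exactly the invariants of the honest group $\mu_3(n)$. I would therefore lean on the rank-three classification assembled in \Cref{sec:classification} (\Cref{pro:subgroup-O3}, \Cref{pro:Klein-class} and \Cref{allGroupData}) and verify, running through the finitely many conjugacy types, which of them realise $\mathcal A^G_1=\varnothing$ together with the equality case $\operatorname h^1=2N-1$ of \Cref{Pro-h_n-2-bound}. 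Separating the antipodal and mixed groups from the $SO_3(\mathbb R)$-subgroups of \Cref{pro:Klein-class} — precisely where an extra apex or a perpendicular plane can alter $N$ or the degree of concentration — is where the real bookkeeping lies, and is the step I expect to require the most care.
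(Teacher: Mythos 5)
Your proof of the forward implication is correct and coincides with the paper's own argument: \Cref{pro:no-cod-one} removes all planes, so $\mathcal A^G$ consists of atom lines (plus possibly the origin), and \Cref{Pro-h_n-2-bound} together with a direct reading of \eqref{eq:Goresky-MacPherson-formula} gives concentration in degree one and $\operatorname{h}^1(U_G)=2N-1$. You are in fact more careful than the paper here, which asserts that $\mathcal A^G$ is non-trivial (false for $\mu_3(n)$, whose lattice is the chain $\hat 0<l_0$), while you dispose of the one-line case separately by noting the apex has codimension two.

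The genuine gap is the converse, and it is a gap that cannot be closed. The classification bookkeeping you postpone does not end in a proof: the two families you flag as ``delicate'' are actual counterexamples to the stated equivalence. Take $G=\J\mu_3(n)$ with $n$ odd, or $G=(\mu_3(2n),\mu_3(n))$ with $n$ even. In both cases the arrangement is the chain $O\subset l_0\subset\mathbb R^3$, hence $U_G=\mathbb R^3\setminus l_0\simeq S^1$: the reduced cohomology is concentrated in degree one and $\operatorname{h}^1(U_G)=1=2N-1$ with $N=1$, so the cohomological side of the equivalence holds. But every element of $G\setminus\mu_3(n)$ has the form $\J g$ with $g$ a rotation of angle $\neq\pi$, hence fixes only the origin by \Cref{lem:FromRotationToReflection}; therefore $\mathcal R_2G=\mu_3(n)\subsetneq G$, and these groups are strictly generated in codimension \emph{three}, exactly as recorded in \Cref{allGroupData}. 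Thus the ``if'' direction of the theorem fails as stated, and no run through \Cref{pro:subgroup-O3} can rescue it: the cohomology of $U_G$ simply cannot distinguish $G$ from $\mathcal R_2G$ when both have the same complement. An equivalence would require an extra hypothesis excluding fixed-point-free elements $\J g$ (for instance assuming $G\subset SO_3(\mathbb R)$ and invoking \Cref{flgso3}), or weakening ``if and only if'' to ``only if''. For comparison, the paper's own proof never addresses the converse at all — it proves only the forward implication — so what your attempt uncovers is a defect in the statement itself rather than a repairable omission in your argument.
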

\begin{proof}
	Since $G$ is strictly generated in codimesion two, then that $\mathcal{A}^G$ is non-trivial and $\operatorname{h}^{2}(U_G)=0$. 
	By \Cref{pro:no-cod-one}, we have that $\mathcal{A}^G$ is only made by lines. So we apply the \Cref{Pro-h_n-2-bound}.
\end{proof}

Finally, we have enough information to fulfill the three dimensional finite linear groups.

\begin{Theo}\label{Thm-cohomology-GL3}
Let $G$ be a finite linear subgroup of ${GL}_3(\mathbb{R})$. 
\begin{itemize}
    \item If $G$ is strictly generated in codimension $3$, then $\mathcal{A}^G$ is trivial, $U_G=\mathbb{R}^3\setminus O$,  $h^0 (U_G)=h^1 (U_G)=0$ and $h^2 (U_G)=1$;

	\item If $G$ is strictly generated in codimension two, then the arrangement is made by $N$ atom lines, and $h^0 (U_G)=h^2 (U_G)=0$ and $h^1 (U_G)=2N-1$;

	\item If $G$ is {\em strictly} generated in codimension one, then the $G$ is a reflection group, $h^2 (U_G)=0$, $h^1 (U_G)=h_0\Delta_G$, and $h^0 (U_G)= C - 1$, where $C$ is the number of chambers of the sub-hyperplane arrangements in $\mathcal{A}^G$.

    \item If $G$ is generated in codimension two or three, then $h^2 (U_G)=0$, $h^1 (U_G)=h_0\Delta_G + N$, and $h^0 (U_G)= C - 1$, where $C$ is the number of chambers of the sub-hyperplane arrangements in $\mathcal{A}^G$.
\end{itemize}
\end{Theo}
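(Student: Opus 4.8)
The plan is to evaluate the Goresky--MacPherson formula \eqref{eq:Goresky-MacPherson-formula} in the special case $n=3$ and simply read off the three nontrivial cohomology groups, then split into the four generation types using the arrangement data assembled in \Cref{sec:classification}. Since $\operatorname{H}^{3}(U_G)=0$ always and $\Delta_{\hat 1}=\Delta_G$, for $n=3$ the formula degenerates to
\begin{align*}
\operatorname{H}^{2}(U_G)&\cong\operatorname{H}_{-1}\Delta_{G},\\
\operatorname{H}^{1}(U_G)&\cong\textstyle\bigoplus_{l\in\mathcal A^G_2}\operatorname{H}_{-1}\Delta_l\;\oplus\;\operatorname{H}_{0}\Delta_{G},\\
\operatorname{H}^{0}(U_G)&\cong\textstyle\bigoplus_{P\in\mathcal A^G_1}\operatorname{H}_{-1}\Delta_P\;\oplus\;\bigoplus_{l\in\mathcal A^G_2}\operatorname{H}_{0}\Delta_l\;\oplus\;\operatorname{H}_{1}\Delta_{G},
\end{align*}
where $\mathcal A^G_1$ and $\mathcal A^G_2$ denote the planes and lines of $\mathcal A^G$. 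Everything then reduces to three structural identities that hold for every finite $G\subset GL_3(\mathbb R)$.

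First I would record those identities. For $\operatorname{h}^2$: since $\operatorname{H}^{2}=\operatorname{H}^{n-1}$, the criterion that a central arrangement is trivial exactly when $\operatorname{H}^{n-1}(U_{\mathcal A})\neq 0$ gives $\operatorname{h}^2(U_G)=1$ if $\mathcal A^G$ is trivial and $\operatorname{h}^2(U_G)=0$ otherwise. For $\operatorname{h}^1$: by \Cref{lem-atom-contribution} the term $\operatorname{H}_{-1}\Delta_l$ equals $\mathbb Q$ precisely when $l$ is an atom (a line lying on no plane of $\mathcal A^G$), so $\operatorname{h}^1(U_G)=N+\operatorname{h}_0\Delta_G$, where $N$ is the number of atom lines; this already recovers the value $2N-1$ of \Cref{Pro-h_n-2-bound} when there are no planes, since then $\Delta_G$ is $N$ disjoint points and $\operatorname{h}_0\Delta_G=N-1$. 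For $\operatorname{h}^0$: the planes are the only codimension-one members of $\mathcal A^G$, they cut $\mathbb R^3$ into $C$ chambers, and deleting the remaining subspaces of codimension at least two (the lines and the origin) does not change the number of connected components; hence $U_G$ has $C$ components and $\operatorname{h}^0(U_G)=C-1$.

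With these in hand the four bullets follow by inspecting the arrangement type attached to each generation class in \Cref{sec:classification,pro:Klein-class}. If $G$ is strictly generated in codimension three the relevant arrangement is trivial, so $U_G=\mathbb R^3\setminus O\simeq S^2$ and $(\operatorname{h}^0,\operatorname{h}^1,\operatorname{h}^2)=(0,0,1)$. If $G$ is strictly generated in codimension two then $\mathcal A^G_1=\emptyset$ by \Cref{pro:no-cod-one}, every line is an atom, and the arrangement is nontrivial; the identities give $\operatorname{h}^0=\operatorname{h}^2=0$ and $\operatorname{h}^1=2N-1$, as in \Cref{Thm-strictly-two-inGL3}. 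If $G$ is a reflection group the arrangement contains planes and, by Steinberg's theorem on pointwise stabilizers, every line is an intersection of reflection planes, so $N=0$ and we get $\operatorname{h}^2=0$, $\operatorname{h}^1=\operatorname{h}_0\Delta_G$, $\operatorname{h}^0=C-1$. Finally, if $G$ is generated in codimension two or three without being a reflection group, its arrangement carries both reflection planes and rotation axes that lie on no plane, so $N>0$ and the general formulas read $\operatorname{h}^2=0$, $\operatorname{h}^1=\operatorname{h}_0\Delta_G+N$, $\operatorname{h}^0=C-1$.

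The main obstacle is not the homological bookkeeping but the dictionary between the generation type of $G$ and the combinatorial type of $\mathcal A^G$: one must verify, case by case in \Cref{sec:classification}, that reflection groups produce no atom lines while genuinely mixed groups always do, and that the codimension-three groups give a trivial arrangement (this last point is the most delicate and is exactly where one should double-check the classification). The second subtle step is the identity $\operatorname{h}^0=C-1$, which I would justify by the $\pi_0$ argument above, since deletions of codimension at least two preserve connectivity; this is cleaner than trying to match the three Goresky--MacPherson summands contributing to $\operatorname{H}^0$ against a Zaslavsky-style chamber count directly.
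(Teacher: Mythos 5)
Your route is the paper's route: specialize the Goresky--MacPherson formula to $n=3$, use \Cref{lem-atom-contribution} to identify the $\operatorname{H}_{-1}\Delta_l$ summands with atom lines (giving $\operatorname{h}^1(U_G)=N+\operatorname{h}_0\Delta_G$), use the triviality criterion for $\operatorname{h}^2$, and count chambers for $\operatorname{h}^0$. Two of your justifications are in fact more complete than the paper's: the $\pi_0$ argument that deleting the codimension-$\geq 2$ strata does not disconnect, so $\operatorname{h}^0(U_G)=C-1$, and the appeal to Steinberg's theorem to show that a rank-three reflection arrangement has no atom lines (the paper merely asserts both). For the second, third and fourth bullets the proposal is sound.

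The step you flagged as needing a double-check is where the argument genuinely fails: strict generation in codimension three does \emph{not} force $\mathcal A^G$ to be trivial. Take $G=\J\mu_3(n)$ with $n$ odd. It is generated by the single element $\J\theta$ (with $\theta$ the order-$n$ rotation about $l_0$), whose fixed space is the origin, so $G$ is strictly generated in codimension three --- exactly as recorded in \Cref{allGroupData}. Yet $\mu_3(n)=\mathcal R_2G$ is a stabilizer subgroup with $V^{\mu_3(n)}=l_0$, so $\mathcal A^G=\{O\subsetneq l_0\}$ is non-trivial (see \Cref{tab:fig:arr-C_oddxI}), $U_G=\mathbb R^3\setminus l_0\simeq S^1$, and $(\operatorname{h}^0,\operatorname{h}^1,\operatorname{h}^2)=(0,1,0)$ rather than $(0,0,1)$. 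The mixed group $(\mu_3(2n),\mu_3(n))$ with $n$ even behaves the same way. The first bullet is correct for $\J\triv[3]=\{\pm\operatorname{Id}\}$ but not in general: strict generation in codimension three constrains the generators, not the fixed spaces of all the other elements of $G$, and $\mathcal R_2G$ can be non-trivial. So the dictionary entry you deferred cannot be verified; the first bullet needs the extra hypothesis that $\mathcal R_2G=\triv[3]$ (equivalently, that $\mathcal A^G$ is trivial), or else those groups must be routed through the general formulas of the fourth bullet. This defect is inherited from the statement and from the paper's one-line proof of the first item, but since your proposal reproduces the claim without closing it, it remains the one real gap.
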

\begin{proof}
The proof uses all the facts we have proved along the section. 
The first item follows easily from the definition of trivial arrangement.
The second item follows from \Cref{Thm-strictly-two-inGL3}.
The case of reflection groups is widely studied, see \cite{Orlik-Terao-book,DeConciniProcesi-Topics-book}.
In this case, let us only mention that there are no atom lines, hence $h^1 (U_G)=h_0\Delta_G$.
We also remark that $h^0 (U_G)+1$ is the number of chambers $C$ of the hyperplane arrangements. 

The last item arises when there are generators in different codimensions. 
The arrangement is not a reflection arrangement, but it might contain atom lines.
Such atom lines change the degree one cohomology, as described by equation \ref{eq-hom-n-2}; the degree zero cohomology is unchanged. 
\end{proof}

The previous theorem suggests two results in higher dimension.

\begin{pro}
Let $G$ be a finite linear group generated in codimension one. Then, $h^0 (U_G)=h^0(U_{\mathcal{R}_1(G)})$.
\end{pro}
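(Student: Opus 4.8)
The plan is to prove the slightly stronger fact that $\operatorname{h}^{0}(U_G)$ depends only on the reflection subgroup $\mathcal{R}_1(G)$; the stated equality is then immediate, and in fact under the literal hypothesis that $G$ is generated in codimension one we have $G=\mathcal{R}_1(G)$ outright, so the two arrangements, hence all their cohomology, coincide. The mechanism I would isolate is that reduced degree-zero cohomology counts path components: $\operatorname{h}^{0}(U_G)$ equals the number of connected components of $U_G$ minus one. Thus it suffices to show that $U_G$ and $U_{\mathcal{R}_1(G)}$ have the same number of components.

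First I would observe that the components of the complement of a central subspace arrangement in $V=\mathbb{R}^n$ are governed entirely by its hyperplanes. Writing $Z_1$ for the union of the codimension-one members $\mathcal{A}_1^G$ and $Z_{\ge 2}$ for the union of all members of codimension at least two, we have $V\setminus Z_1=\bigsqcup_i C_i$, the disjoint union of the open chambers of $\mathcal{A}_1^G$, and $U_G=(V\setminus Z_1)\setminus Z_{\ge 2}=\bigsqcup_i\bigl(C_i\setminus Z_{\ge 2}\bigr)$. Each chamber $C_i$ is open and connected, and $Z_{\ge 2}\cap C_i$ is a relatively closed set contained in finitely many affine subspaces of codimension at least two; removing it leaves $C_i$ connected and nonempty when $n\ge 2$ (and $Z_{\ge 2}$ is vacuous when $n=1$). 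Hence the path components of $U_G$ are in bijection with the chambers of $\mathcal{A}_1^G$, so their number equals the chamber count $C$ of $\mathcal{A}_1^G$.

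Next I would identify the hyperplanes of the two arrangements. Since $\mathcal{R}_1(G)\subseteq G$, every reflection of $\mathcal{R}_1(G)$ is a reflection of $G$; conversely every reflection of $G$ is, by definition, one of the generators of $\mathcal{R}_1(G)$, hence lies in $\mathcal{R}_1(G)$ and is a reflection there. Thus $G$ and $\mathcal{R}_1(G)$ have exactly the same reflections, hence exactly the same reflection hyperplanes, so $\mathcal{A}_1^G=\mathcal{A}_1^{\mathcal{R}_1(G)}$. These two hyperplane arrangements therefore have the same chamber count, and by the previous paragraph $U_G$ and $U_{\mathcal{R}_1(G)}$ have the same number of components; subtracting one gives $\operatorname{h}^{0}(U_G)=\operatorname{h}^{0}(U_{\mathcal{R}_1(G)})$.

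The only point requiring genuine care---the main, if modest, obstacle---is the connectivity claim that deleting a closed subset lying in finitely many codimension-$\ge 2$ subspaces does not disconnect an open connected region of $\mathbb{R}^n$. This is standard but should be stated explicitly: two points of a chamber can be joined by a path which, by a transversality/perturbation argument available precisely because $n\ge 2$, may be pushed off the finitely many codimension-$\ge 2$ strata. Alternatively one could read the same conclusion off the Goresky--MacPherson formula \eqref{eq:Goresky-MacPherson-formula}, but the topological argument is cleaner and avoids having to track the higher-codimension contributions to degree zero.
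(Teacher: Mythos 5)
Your proof is correct, but it takes a genuinely different route from the paper's. The paper disposes of this proposition in a single line, by appealing to the observation made in the last case of \Cref{Thm-cohomology-GL3}: there, cohomology is computed through the Goresky--MacPherson formula, and the point is that the codimension-$\geq 2$ elements of the lattice contribute only to higher cohomological degrees, so that $\operatorname{h}^0(U_G)$ is read off the sub-hyperplane arrangement alone (namely $\operatorname{h}^0(U_G)=C-1$, with $C$ the number of chambers). That observation, however, is only spelled out in rank three, where the lattice contains nothing but hyperplanes, lines and the origin; carrying the Goresky--MacPherson bookkeeping to arbitrary rank would require checking that the extra codimension-$\geq 2$ elements of $\mathcal{A}^G$ neither create new degree-zero contributions $\operatorname{H}_{\operatorname{codim}(x)-2}\Delta_x$ nor alter those of the elements shared with $\mathcal{A}^{\mathcal{R}_1(G)}$, which is not immediate. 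Your argument replaces this bookkeeping with the direct topological statement that deleting a finite union of codimension-$\geq 2$ linear subspaces from a chamber leaves it connected and nonempty (and since chambers are convex, the perturbation step is elementary: a generic apex $z$ makes both segments $[x,z]$ and $[z,y]$ miss the subspaces), so the components of $U_G$ biject with the chambers of $\mathcal{A}^G_1$; combined with your correct identification $\mathcal{A}^G_1=\mathcal{A}^{\mathcal{R}_1(G)}_1$, coming from the fact that $G$ and $\mathcal{R}_1(G)$ have the same reflections, this yields a self-contained proof valid in every rank. You also rightly point out that under the paper's literal definition, ``generated in codimension one'' forces $G=\mathcal{R}_1(G)$ and makes the statement vacuous as phrased; your stronger formulation, that $\operatorname{h}^0(U_G)$ depends only on $\mathcal{R}_1(G)$ for an arbitrary finite linear group $G$, is what the proposition is morally after (it is the fact actually used for the mixed groups in \Cref{Thm-cohomology-GL3}), and your argument proves exactly that.
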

\begin{proof}
The proof follows from the same observation given in the proof of the last case of \Cref{Thm-cohomology-GL3}.
\end{proof}

\begin{Cor}
Let $G$ be a finite linear group generated such that $\mathcal{A}^G$ has no hyperplane. Then, $h^0 (U_G)=0$.
\end{Cor}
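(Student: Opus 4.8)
The plan is to read the conclusion off the Goresky--MacPherson formula \eqref{eq:Goresky-MacPherson-formula}, specialized to $i=0$, which computes the reduced zeroth cohomology (equivalently, one less than the number of connected components of $U_G$). Writing $n=\dim V$ and $j=\operatorname{codim}x$, the formula reads
\[
    \operatorname{H}^0(U_G)\cong\bigoplus_{j=1}^{n}\bigoplus_{x\in\mathcal{L}_j(\mathcal{A}^G)}\operatorname{H}_{j-2}\Delta_x,
\]
so it suffices to show that every summand $\operatorname{H}_{j-2}\Delta_x$ vanishes.

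First I would record the structural input from \eqref{eq:latticeOfIntersection}: the lattice $\mathcal{L}(\mathcal{A}^G)$ is exactly $\mathcal{A}^G$ together with the bottom element $\hat 0=V$. Since by hypothesis $\mathcal{A}^G$ contains no hyperplane, the lattice $\mathcal{L}(\mathcal{A}^G)$ has no element of codimension one at all. In particular $\mathcal{L}_1(\mathcal{A}^G)=\emptyset$, so the $j=1$ term of the sum drops out immediately.

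The heart of the argument is a dimension bound on the order complex $\Delta_x=\roc(\hat 0,x)$. An element $y$ of the open interval $(\hat 0,x)$ is a subspace with $V\supsetneq y\supsetneq x$, so its codimension lies strictly between $0$ and $j$; because there are no codimension-one subspaces in the lattice, the admissible codimensions lie in $\{2,\dots,j-1\}$, a set of size $j-2$. Since a chain in $(\hat 0,x)$ has strictly increasing codimensions, it has at most $j-2$ elements, whence $\dim\Delta_x\le(j-2)-1=j-3$. The reduced homology of $\Delta_x$ therefore vanishes above degree $j-3$, and in particular $\operatorname{H}_{j-2}\Delta_x=0$ for every $x$; in the boundary case $j=2$ this simply records that $(\hat 0,x)$ is empty and $\operatorname{H}_0$ of the void order complex vanishes. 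Summing over all $x$ yields $\operatorname{H}^0(U_G)=0$, that is $\operatorname{h}^0(U_G)=0$.

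I expect the only delicate point to be bookkeeping of conventions: one must work with reduced homology throughout (so that an atom contributes in degree $-1$ and not in degree $0$) and keep track of the void order complex in the small cases $j=1,2$. A purely topological alternative would bypass the formula entirely: $U_G$ is the complement in the connected space $\mathbb{R}^n$ of a finite union of linear subspaces of codimension at least two, hence is path-connected (a generic path joining two points meets no subspace of codimension $\ge 2$), giving $\operatorname{h}^0(U_G)=0$ directly. I would keep the Goresky--MacPherson derivation as the main argument, since it stays within the framework of this section, and mention the topological route as a sanity check.
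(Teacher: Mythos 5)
Your proof is correct. It is worth noting how it differs from the paper's treatment: the paper offers essentially no independent argument for this corollary, deriving it from the immediately preceding proposition that $h^0(U_G)=h^0(U_{\mathcal{R}_1(G)})$, which in turn rests on the observation in the proof of \Cref{Thm-cohomology-GL3} that $h^0(U_G)=C-1$, with $C$ the number of chambers of the sub-hyperplane arrangement of $\mathcal{A}^G$; with no hyperplanes there is a single chamber, so $h^0(U_G)=0$. You instead verify the vanishing summand by summand in the Goresky--MacPherson formula: since the lattice $\mathcal{L}(\mathcal{A}^G)$ is just $\mathcal{A}^G\cup\{\hat 0\}$ by \cref{eq:latticeOfIntersection}, the absence of hyperplanes means no codimension-one elements anywhere, so chains in an open interval $(\hat 0,x)$ with $\operatorname{codim}x=j$ can only use codimensions $2,\dots,j-1$, giving $\dim\Delta_x\leq j-3$ and hence $\operatorname{H}_{j-2}\Delta_x=0$ (including the void case $j=2$). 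This dimension bound is precisely the rigor that the paper's chamber-count observation leaves implicit, and it has the advantage of being self-contained and valid in any rank, with no appeal to chambers or to $\mathcal{R}_1(G)$. Your topological remark, that the complement of finitely many linear subspaces of codimension at least two in $\mathbb{R}^n$ is path-connected and the paper's $\operatorname{H}^0$ is reduced, is the shortest correct argument of all and is a good cross-check that the combinatorial bookkeeping (reduced homology, void complexes) has been done consistently.
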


In the four dimensional real case, we are particularly interested in the finite groups generated in codimension two.
From what we have already proven in this section, $\operatorname{h}^{i}(U_G)=0$ for $i\neq 1,2$.
Let us have a look at the remaining cohomology group:

\begin{equation}\label{eq-hom-n-3}
\operatorname{H}^{1}(U_\mathcal{A})\cong 
        \operatorname{H}_{1}\Delta_{\hat{1}}%
        \bigoplus_{l} \operatorname{H}_{0}\Delta_l%
        \bigoplus_{\pi} \operatorname{H}_{-1}\Delta_{\pi}
\end{equation}

Using \Cref{lem-atom-contribution}, we know that $\bigoplus_{\pi} \operatorname{H}_{-1}(\Delta_{\pi})$ counts the number of planes; all planes are atoms since this the $G$ is strictly generated in codimension two. Set $\sum_{\pi} \operatorname{h}_{-1}(\Delta_{\pi})=a^G_{2}=\#\mathcal{A}^G_2$, the number of intersections in codimension two of $\mathcal{A}^G$. 
More in general, we set
\[
	a_i\define\#\mathcal{A}_i, \mbox{ and } a_i^G\define \#\mathcal{A}^G_i.
\]

\begin{pro}\label{pro-arrangement-with-linee-and-planes}
    %
    If $\mathcal{A}$ is a central subspace arrangement made only by lines and planes in $\mathbb{R}^n$, then 
    \[
    \sum_{l} \operatorname{h}_{0}(\Delta_l)=\sum_{l} M_l -a_{n-1} + N,
    \]
    where $M_l=\#\{\pi\in \mathcal{A}: l\subset \pi \}$ is the number of planes containing $l$, and $N$ is the number of atom lines.
\end{pro}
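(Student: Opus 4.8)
The plan is to identify each reduced order complex $\Delta_l$ explicitly and then add up the contributions. First I would fix a line $l$ — that is, a codimension $(n-1)$ element of $\mathcal{L}(\mathcal{A})$ — and describe the open interval $(\hat 0, l)$, whose elements are the $x\in\mathcal{L}(\mathcal{A})$ with $l\subsetneq x\subsetneq V$. Since every lattice element is an intersection of members of $\mathcal{A}$, and such an intersection contains $l$ only if each member does, the only members that can enter are the planes through $l$, of which there are $M_l$.

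The geometric heart of the argument is the observation that two distinct planes both containing $l$ intersect exactly in $l$: their intersection contains $l$ and cannot be $2$-dimensional without the two planes coinciding. I would use this to conclude that every intersection of two or more such planes collapses to $l$ (hence is excluded from the open interval), while each single plane sits strictly between $l$ and $V$. Thus $(\hat 0, l)$ is an antichain of $M_l$ elements, and its order complex $\Delta_l$ is a disjoint union of $M_l$ points.

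With $\Delta_l$ in hand the homology is routine: $\operatorname{h}_0(\Delta_l)=M_l-1$ when $M_l\geq 1$, and $\operatorname{h}_0(\Delta_l)=0$ when $M_l=0$, the latter case being exactly the atom lines (cf. \Cref{lem-atom-contribution}, since the interval is empty precisely when $l$ lies in no plane). Summing over all lines, the atom lines contribute $0$ and also satisfy $M_l=0$, so I would obtain $\sum_l \operatorname{h}_0(\Delta_l)=\sum_{M_l\geq 1}(M_l-1)=\sum_l M_l-(a_{n-1}-N)$, which rearranges to the stated identity.

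The proof is short, so there is no serious obstacle; the one point requiring care is bookkeeping — namely that the lines indexing the sum and those counted by $a_{n-1}$ are the same codimension $(n-1)$ elements of $\mathcal{L}(\mathcal{A})$ (which for the arrangements $\mathcal{A}^G$ of interest coincide with the arrangement lines, as noted via $\mathcal{L}_j(\mathcal{A}^G)=\mathcal{A}^G_j$), and that atom lines are characterized as exactly those contained in no plane. The only genuine mathematical input is the antichain fact about distinct planes through a common line.
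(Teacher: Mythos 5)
Your proof is correct and follows essentially the same route as the paper's: split the lines into atoms (where $M_l=0$ and the contribution is $0$) and non-atoms (where $\operatorname{h}_{0}(\Delta_l)=M_l-1$), then sum using $a_{n-1}=N+\#\{\text{non-atom lines}\}$. The only real difference is that you explicitly justify why $\Delta_l$ is a disjoint union of $M_l$ vertices --- via the observation that two distinct planes through $l$ meet exactly in $l$, so the open interval $(\hat{0},l)$ is an antichain of the $M_l$ planes containing $l$ --- a point the paper's proof asserts without argument.
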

\begin{proof}
	We note that $a_{n-1}=\#\mathcal{A}_{n-1}$ is the number of lines in the arrangements.
	The order complex $\Delta_l$ is empty if and only if $l$ is a atom lines. In such case $\operatorname{h}_{0}\Delta_l=0$.
    If $l$ is not an atom, then there exist a plane $\pi$ such that $\hat{0}<\pi<l$ and $\Delta_l$ is the union of $M_l$ vertices, precisely, $M_l=\#\{\pi\in \mathcal{A}: l\subset \pi \}$, the number of planes containing $l$; hence $\operatorname{h}_{0}\Delta_l=M_l-1$. 
    We observe that $M_l=0$, if $l$ is an atom. Hence, $\sum_{l} \operatorname{h}_{0}(\Delta_l)=\sum_{l'} (M_{l'}-1)$, where the second sum run over the non atom lines $l'$.
    Observe that $$\sum_{l'} (M_{l'}-1)=\sum_{l'} M_{l'}-\sum_{l'} 1=\sum_{l} M_{l}-\sum_{l'} 1.$$  
    Observe that $a_{n-1}=\sum_{l} 1=\sum_{l'} 1+N$.
\end{proof}

The finite linear groups generated in codimension two in ${GL}_{3}(\mathbb{R})$ and strictly generated in codimension two in ${GL}_{4}(\mathbb{R})$ satisfy the hypothesis of the previous proposition.




\begin{Cor}
If $G$ is a finite linear group strictly generated in codimension two in ${GL}_{4}(\mathbb{R})$, then\[
    \sum_{l} \operatorname{h}_{0}(\Delta_l)=\sum_{l} M_l -a^G_{3} + N.
\]
\end{Cor}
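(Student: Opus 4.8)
The plan is to obtain this Corollary as a direct specialization of \Cref{pro-arrangement-with-linee-and-planes} to the arrangement $\mathcal{A}^G$ in $\mathbb{R}^4$, so that the whole argument reduces to checking that the hypotheses of that proposition are satisfied and then matching the indices. First I would verify that $\mathcal{A}^G$ is built only from lines and planes. Since $G$ is strictly generated in codimension two, \Cref{pro:no-cod-one} gives $\mathcal{A}^G_1=\emptyset$, so there are no codimension-one subspaces. Consequently every nonzero element of $\mathcal{A}^G$ has codimension at least two; but a codimension-two subspace can only be contained in a codimension-one subspace or in $V$ itself, and as neither of these lies in $\mathcal{A}^G$ (the former is excluded, the latter is $\hat 0$), the codimension-two subspaces are precisely the maximal subspaces, i.e.\ the planes. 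Hence every proper element of $\mathcal{A}^G$ is a plane (codimension two) or a line (codimension three), the only possible further element being the origin, which, when present, is the unique top element $\hat 1$ of $\mathcal{L}(\mathcal{A}^G)$.

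Next I would observe that this is exactly the combinatorial input required by \Cref{pro-arrangement-with-linee-and-planes}. For any line $l$, the open interval $(\hat 0, l)$ of $\mathcal{L}(\mathcal{A}^G)$ consists of the subspaces strictly containing $l$, which have codimension one or two; as there are no hyperplanes, these are precisely the $M_l$ planes through $l$, so $\Delta_l$ is a set of $M_l$ discrete vertices and $\operatorname{h}_0(\Delta_l)=M_l-1$ for a non-atom line. Crucially, the origin, being contained in every line, never lies in such an interval, so its presence (as $\hat 1$) affects neither the reduced order complexes $\Delta_l$ nor the sum $\sum_l \operatorname{h}_0(\Delta_l)$. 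Therefore \Cref{pro-arrangement-with-linee-and-planes} applies verbatim with $n=4$. To finish, I would match indices: with $n=4$ the lines are the codimension-three subspaces, so $a_{n-1}=\#\mathcal{A}^G_3=a^G_3$, and substituting this into the conclusion of \Cref{pro-arrangement-with-linee-and-planes} yields exactly the claimed identity.

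The only genuine point requiring care, and hence the main (mild) obstacle, is the bookkeeping around the origin: the phrase ``made only by lines and planes'' must be read so that it is not spoiled by the presence of the zero subspace as $\hat 1$. One must confirm that allowing $\hat 1=\{0\}$ does not alter any of the counts, which is precisely the observation above that $\hat 1$ lies \emph{above} every line in the intersection poset $\mathcal{L}(\mathcal{A}^G)$ and so contributes nothing to the intervals $(\hat 0,l)$. Once this is noted, the reduction is immediate and no further computation is needed.
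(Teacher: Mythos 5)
Your proposal is correct and follows essentially the same route as the paper: the paper's proof likewise invokes \Cref{pro:no-cod-one} to rule out hyperplanes so that \Cref{pro-arrangement-with-linee-and-planes} applies, and then notes that lines in $\mathbb{R}^4$ have codimension three, giving $a_{n-1}=a^G_3$. Your additional care about the origin appearing as $\hat 1$ is a sound (and slightly more thorough) reading of the hypothesis ``made only by lines and planes,'' but it does not change the argument.
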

\begin{proof}
There are no hyperplanes in $\mathcal{A}^G$ because of \Cref{pro:no-cod-one}. Observe that lines are in codimension three.
\end{proof}

If $G$ is strictly generated in codimension two in ${GL}_{4}(\mathbb{R})$, then 
\begin{equation}\label{eq:h_n-3-open}
\operatorname{h}^{1}(U_G)= \operatorname{h}_{1}\Delta_{G} + \sum_{l} M_l -a^G_3 + N + a^G_2.
\end{equation}
where $M_l$, $a^G_2$, $a^G_3$, and $N$ are defined above.

\begin{Theo}\label{THM:cohomology-dim4}
If $G$ is a finite linear group strictly generated in codimension two in ${GL}_{4}(\mathbb{R})$, then
\begin{eqnarray*}
    \operatorname{h}^{1}(U_G)&=& \operatorname{h}_{1}\Delta_{G} + \sum_{l} M_l -a^G_3 + N + a^G_2,\\
    \operatorname{h}^{2}(U_G)&>&2N-1.
\end{eqnarray*}
\end{Theo}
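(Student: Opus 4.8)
The first identity is precisely equation \eqref{eq:h_n-3-open}, so I would simply recall its derivation. Setting $i=1$ in the Goresky--MacPherson formula \eqref{eq:Goresky-MacPherson-formula} and using that $\mathcal A^G$ contains no hyperplanes (\Cref{pro:no-cod-one}), the surviving summands are $\operatorname{H}_{-1}\Delta_\pi$ over the planes, $\operatorname{H}_0\Delta_l$ over the lines, and $\operatorname{H}_1\Delta_{\hat 1}=\operatorname{H}_1\Delta_G$ from the origin. Because $G$ is strictly generated in codimension two every plane is an atom, so \Cref{lem-atom-contribution} makes the plane contribution equal to $a_2^G$; the line contribution is evaluated by \Cref{pro-arrangement-with-linee-and-planes} (with $a_{n-1}=a_3^G$) as $\sum_l M_l-a_3^G+N$. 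Summing the three pieces reproduces the first line of the statement.

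For the second identity I would run the same machine with $i=2$. In $\mathbb R^4$ the planes have codimension two and contribute $\operatorname{H}_{-2}\Delta_\pi=0$; the lines, in codimension three, contribute $\operatorname{H}_{-1}\Delta_l$; and the origin, in codimension four, contributes $\operatorname{H}_0\Delta_{\hat 1}=\operatorname{H}_0\Delta_G$. By \Cref{lem-atom-contribution} the line terms add up to the number $N$ of atom lines, whence
\[
    \operatorname{h}^2(U_G)=N+\operatorname{h}_0\Delta_G.
\]
The task then reduces to the clean combinatorial inequality $\operatorname{h}_0\Delta_G\geq N$, i.e. that the order complex $\Delta_G$ has at least $N+1$ connected components.

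This last point is where I expect the real work to lie, and it rests on the shape of $\Delta_G$, whose vertices are exactly the planes and lines of $\mathcal A^G$ and whose edges join comparable subspaces, namely a line contained in a plane. Since two distinct lines are never comparable and a plane is comparable to a line only by inclusion, every atom line---a line lying in no plane of the arrangement---is an isolated vertex, which already produces $N$ components. It remains to exhibit at least one further component: as $G$ is strictly generated in codimension two it is generated by rotations, and the fixed plane $V^{\langle g\rangle}$ of such a generating rotation is a codimension-two member of $\mathcal A^G$, so $a_2^G\geq 1$; the planes and the non-atom lines they contain form at least one component disjoint from all the isolated atom lines. Hence $\Delta_G$ has at least $N+1$ components, giving $\operatorname{h}_0\Delta_G\geq N$ and therefore
\[
    \operatorname{h}^2(U_G)=N+\operatorname{h}_0\Delta_G\geq 2N>2N-1.
\]
The delicate checks are exactly that atom lines are genuinely isolated in $\Delta_G$ and that a plane is always present; both are consequences of \Cref{pro:no-cod-one} and of strict generation in codimension two, and together they are what upgrade the trivial bound to the strict inequality.
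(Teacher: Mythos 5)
Your proposal is correct and follows the paper's own route: the first identity is exactly \cref{eq:h_n-3-open}, and your computation of $\operatorname{h}^{2}(U_G)=N+\operatorname{h}_{0}\Delta_{G}$ together with the component count of $\Delta_G$ is precisely the content of \Cref{Pro-h_n-2-bound} (with $n=4$), combined with the observation that strict generation in codimension two forces $a_2^G\geq 1$, so the arrangement is not made only of lines and the bound is strict. You have merely re-derived \Cref{Pro-h_n-2-bound} in place of citing it; the argument is the same.
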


\addcontentsline{toc}{section}{Bibliography}
\bibliographystyle{amsalpha}
\bibliography{biblio}

\providecommand{\bysame}{\leavevmode\hbox to3em{\hrulefill}\thinspace}
\providecommand{\MR}{\relax\ifhmode\unskip\space\fi MR }
\providecommand{\MRhref}[2]{%
  \href{http://www.ams.org/mathscinet-getitem?mr=#1}{#2}
}
\providecommand{\href}[2]{#2}
\begin{thebibliography}{GMS03}

\bibitem[BBR02]{Bessis-Quotients}
David Bessis, C{\'e}dric Bonnaf{\'e}, and Rapha{\"e}l Rouquier, \emph{Quotients
  et extensions de groupes de r\'eflexion}, Math. Ann. \textbf{323} (2002),
  no.~3, 405--436. \MR{1923691 (2003g:20066)}

\bibitem[Bes15]{Bessis-Kp1}
David Bessis, \emph{Finite complex reflection arrangements are {$K(\pi,1)$}},
  Ann. of Math. (2) \textbf{181} (2015), no.~3, 809--904. \MR{3296817}

\bibitem[Bou02]{MR1890629}
Nicolas Bourbaki, \emph{Lie groups and {L}ie algebras. {C}hapters 4--6},
  Elements of Mathematics (Berlin), Springer-Verlag, Berlin, 2002, Translated
  from the 1968 French original by Andrew Pressley. \MR{1890629}

\bibitem[Bri73]{Brieskorn-Sur-les-groupes}
Egbert Brieskorn, \emph{Sur les groupes de tresses [d'apr\`es {V}. {I}.
  {A}rnold]}, S\'eminaire {B}ourbaki, 24\`eme ann\'ee (1971/1972), {E}xp. {N}o.
  401, Springer, Berlin, 1973, pp.~21--44. Lecture Notes in Math., Vol. 317.
  \MR{0422674 (54 \#10660)}

\bibitem[BRR08]{BrentiReinerRoichman-Alternating}
Francesco Brenti, Victor Reiner, and Yuval Roichman, \emph{Alternating
  subgroups of {C}oxeter groups}, J. Combin. Theory Ser. A \textbf{115} (2008),
  no.~5, 845--877. \MR{2417024}

\bibitem[Cox34]{MR1503182}
H.~S.~M. Coxeter, \emph{Discrete groups generated by reflections}, Ann. of
  Math. (2) \textbf{35} (1934), no.~3, 588--621. \MR{1503182}

\bibitem[DCP11]{DeConciniProcesi-Topics-book}
Corrado De~Concini and Claudio Procesi, \emph{Topics in hyperplane
  arrangements, polytopes and box-splines}, Universitext, Springer, New York,
  2011. \MR{2722776}

\bibitem[Del72]{Deligne-Les-immeubles}
Pierre Deligne, \emph{Les immeubles des groupes de tresses g\'en\'eralis\'es},
  Invent. Math. \textbf{17} (1972), 273--302. \MR{0422673 (54 \#10659)}

\bibitem[DM15]{DelucchiMartinoBW}
Emanuele Delucchi and Ivan Martino, \emph{Subspace arrangements and motives of
  classifying stacks of reflection groups}, arXiv:1507.03486, 2015.

\bibitem[Eke09]{Ekedahl-inv}
Torsten Ekedahl, \emph{A geometric invariant of a finite group},
  arXiv:0903.3148v1, 2009.

\bibitem[FT97]{Falk-Terao}
Michael Falk and Hiroaki Terao, \emph{{$\beta$}nbc-bases for cohomology of
  local systems on hyperplane complements}, Trans. Amer. Math. Soc.
  \textbf{349} (1997), no.~1, 189--202. \MR{1401770 (97g:52029)}

\bibitem[GM88]{GoreskyMacPhersonSubspaces}
Mark Goresky and Robert MacPherson, \emph{Stratified {M}orse theory},
  Ergebnisse der Mathematik und ihrer Grenzgebiete (3) [Results in Mathematics
  and Related Areas (3)], vol.~14, Springer-Verlag, Berlin, 1988. \MR{932724}

\bibitem[GM12]{GordonMcNulty-Matroids:AGeometric}
Gary Gordon and Jennifer McNulty, \emph{Matroids: a geometric introduction},
  Cambridge University Press, Cambridge, 2012. \MR{2964407}

\bibitem[GMS03]{SkipMerkurjevSerr-Cohomological}
Skip Garibaldi, Alexander Merkurjev, and Jean-Pierre Serre, \emph{Cohomological
  invariants in {G}alois cohomology}, University Lecture Series, vol.~28,
  American Mathematical Society, Providence, RI, 2003. \MR{1999383}

\bibitem[Kle56]{Klein-icosahedron}
Felix Klein, \emph{Lectures on the icosahedron and the solution of equations of
  the fifth degree}, revised ed., Dover Publications, Inc., New York, N.Y.,
  1956, Translated into English by George Gavin Morrice. \MR{0080930}

\bibitem[KR08]{KuttlerReichstein-Intrinsic}
Jochen Kuttler and Zinovy Reichstein, \emph{Is the {L}una stratification
  intrinsic?}, Ann. Inst. Fourier (Grenoble) \textbf{58} (2008), no.~2,
  689--721. \MR{2410387}

\bibitem[Kut11]{Kuttler-Lifting}
J.~Kuttler, \emph{Lifting automorphisms of generalized adjoint quotients},
  Transform. Groups \textbf{16} (2011), no.~4, 1115--1135. \MR{2852492}

\bibitem[Lor06]{Lorenz-Cohen-Macaulay}
Martin Lorenz, \emph{On the cohen-macaulay property of multiplicative
  invariants}, Transactions of the American Mathematical Society \textbf{358}
  (2006), no.~4, 1605--1617.

\bibitem[Lun73]{Luna-Slices}
Domingo Luna, \emph{Slices \'etales}, 81--105. Bull. Soc. Math. France, Paris,
  M\'emoire 33. \MR{0342523}

\bibitem[Mar13]{Intro-Ekedahl-Invariants}
Ivan Martino, \emph{{I}ntroduction to the {E}kedahl invariants},
  arXiv:1312.6496, 2013.

\bibitem[Mar15]{Martino-TEIFFG}
\bysame, \emph{The ekedahl invariants for finite groups}, To appear in Journal
  of Pure and Applied Algebra, http://arxiv.org/abs/1312.0476, 2015.

\bibitem[MS]{MartinoSingh-complex}
Ivan Martino and Rahul Singh, \emph{Group generated in complex codimension
  two}, Work in progress.

\bibitem[OT92]{Orlik-Terao-book}
Peter Orlik and Hiroaki Terao, \emph{Arrangements of hyperplanes}, Grundlehren
  der Mathematischen Wissenschaften [Fundamental Principles of Mathematical
  Sciences], vol. 300, Springer-Verlag, Berlin, 1992. \MR{1217488 (94e:52014)}

\bibitem[Pop07]{Popov-GenericallyMultiple}
Vladimir~L. Popov, \emph{Generically multiple transitive algebraic group
  actions}, Algebraic groups and homogeneous spaces, Tata Inst. Fund. Res.
  Stud. Math., vol.~19, Tata Inst. Fund. Res., Mumbai, 2007, pp.~481--523.
  \MR{2348915}

\bibitem[Sch80]{Schwarz-Lifting}
Gerald~W. Schwarz, \emph{Lifting smooth homotopies of orbit spaces}, Inst.
  Hautes \'Etudes Sci. Publ. Math. (1980), no.~51, 37--135. \MR{573821}

\bibitem[Sch13]{Schwarz-VectorLuna}
\bysame, \emph{Vector fields and {L}una strata}, J. Pure Appl. Algebra
  \textbf{217} (2013), no.~1, 54--58. \MR{2965903}

\bibitem[Sta12]{Stanley}
Richard~P. Stanley, \emph{Enumerative combinatorics. {V}olume 1}, second ed.,
  Cambridge Studies in Advanced Mathematics, vol.~49, Cambridge University
  Press, Cambridge, 2012. \MR{2868112}

\bibitem[Tot16]{Totaro-MotiveClassifying}
Burt Totaro, \emph{The motive of a classifying space}, Geom. Topol. \textbf{20}
  (2016), no.~4, 2079--2133. \MR{3548464}

\bibitem[VP89]{VinbergPopov-Invariant}
\`E.~B. Vinberg and V.~L. Popov, \emph{Invariant theory}, Algebraic geometry, 4
  ({R}ussian), Itogi Nauki i Tekhniki, Akad. Nauk SSSR, Vsesoyuz. Inst. Nauchn.
  i Tekhn. Inform., Moscow, 1989, pp.~137--314, 315. \MR{1100485}

\end{thebibliography}

\vspace{0.5cm}
 
\noindent
 {\scshape Ivan Martino}\\
 {\scshape Department of Mathematics, Northeastern University,\\ Boston, MA 02115, USA}.\\
 {\itshape E-mail address}: \texttt{i.martino@northeastern.edu}

\vspace{0.5cm}

\noindent
 {\scshape Rahul Singh}\\
 {\scshape Department of Mathematics, Northeastern University,\\ Boston, MA 02115, USA}.\\
 {\itshape E-mail address}: \texttt{singh.rah@husky.neu.edu}
\end{document}